\long\def\symbolfootnote[#1]#2{\begingroup%
\def\thefootnote{\fnsymbol{footnote}}\footnote[#1]{#2}\endgroup}
\newcommand{\Aut}{\textup{Aut}}
\def\imod#1{\allowbreak\mkern10mu({\operator@font mod}\,\,#1)}
\newtheorem{theorem}{Theorem}[section]
\newtheorem{lemma}[theorem]{Lemma}
\newtheorem{corollary}[theorem]{Corollary}
\newtheorem{proposition}[theorem]{Proposition}
\newtheorem*{theorem*}{Theorem}
\theoremstyle{definition}
\numberwithin{equation}{section}
\newcommand{\ignore}[1]{}
\newcommand{\mynote}[1]{}
\begin{document}
\setcounter{section}{0}
\title{$z$-classes and rational conjugacy classes in alternating groups}
\author{Sushil Bhunia, Dilpreet Kaur, Anupam Singh}
\address{IISER Pune, Dr. Homi Bhabha Road, Pashan, Pune 411008 INDIA}
\email{sushilbhunia@gmail.com}
\email{dilpreetmaths@gmail.com}
\email{anupamk18@gmail.com}
\thanks{During this work, the first named author was supported by CSIR Ph.D. fellowship, and second named author was supported by NBHM Post-Doctoral fellowship.}
\subjclass[2010]{20B30, 20B35}
\today
\keywords{Centralizers, alternating group, rational conjugacy classes, rational-valued characters}
\begin{abstract}
In this paper, we compute the number of $z$-classes (conjugacy classes of centralizers of elements) in the symmetric group $S_n$, when $n\geq 3$ and alternating group $A_n$ when $n\geq 4$. It turns out that the difference between the number of conjugacy classes and the number of $z$-classes for $S_n$ is determined by those restricted partitions of $n-2$ in which $1$ and $2$ do not appear as its part. And, in the case of alternating groups, it is determined by those restricted partitions of $n-3$ which has all its parts distinct, odd and in which $1$ (and $2$) does not appear as its part, along with an error term. The error term is given by those partitions of $n$ which have each of its part distinct, odd and perfect square. Further, we prove that the number of rational-valued irreducible complex characters for $A_n$ is same as the number of conjugacy classes which are rational.
\end{abstract}
\maketitle

\section{Introduction}
Let $G$ be a group. Two elements $x,y\in G$ are said to be $z$-conjugate if their centralizers $\mathcal Z_G(x)$ and $\mathcal Z_G(y)$ are conjugate in $G$. This defines an equivalence relation on $G$ and the equivalence classes are called $z$-classes. Clearly if $x$ and $y$ are conjugate then they are also $z$-conjugate. Thus, in general, $z$-conjugacy is a weaker relation than conjugacy on $G$. In the theory of groups of Lie type, this is also called ``types'' (see~\cite{gr}) and the number of $z$-classes of semisimple elements is called the genus number (see~\cite{ca1, ca2}). This has been studied explicitly for various groups of Lie type in several papers, see for example,~\cite{bs,go, gk, ku, si}. In this work, we want to classify and count the number of $z$-classes for symmetric and alternating groups. For convenience we deal with these groups when they are non-commutative (the commutative cases can be easily calculated), i.e., we assume $n\geq 3$ while dealing with symmetric groups and $n\geq 4$ while dealing with alternating groups. 

Let $\sigma\in S_n$. The conjugacy classes of elements in $S_n$ are determined by their cycle structure which, in turn, is determined by a partition of $n$. Let $\lambda = \lambda_1^{e_1} \lambda_2^{e_2}\cdots \lambda_r^{e_r}$ be a partition of $n$, i.e., we have $1\leq \lambda_1 < \lambda_2 < \ldots < \lambda_r\leq n$, each $e_i > 0$ and $n=\sum_{i=1}^r \lambda_ie_i$. We may represent an element of $S_n$ corresponding to a partition $\lambda$ in cycle notation. We prove the following,
\begin{theorem}\label{maintheoremsn}
Suppose $n\geq 3$. Let $\nu$ be a restricted partition of $n-2$ in which $1$ and $2$ do not appear as its part. Let $\lambda = 1^2\nu$ and $\mu=2^1\nu$ be partitions of $n$ obtained by extending $\nu$. Then the conjugacy classes of $\lambda$ and $\mu$ belong to the same $z$-class in $S_n$. Further, the converse is also true.
\end{theorem}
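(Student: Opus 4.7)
The plan is to produce explicit representatives whose centralizers are \emph{equal} as subgroups of $S_n$. Take $\sigma$ of cycle type $\lambda = 1^2\nu$ with fixed points $\{n-1,n\}$ and the $\nu$-cycles on $\{1,\dots,n-2\}$, and set $\tau := \sigma\cdot(n-1\ n)$, which has cycle type $\mu = 2^1\nu$. Since $\nu$ contains no part equal to $2$, the transposition $(n-1\ n)$ is the \emph{unique} $2$-cycle of $\tau$, so any permutation centralizing either $\sigma$ or $\tau$ is forced to preserve $\{n-1,n\}$ setwise and to commute with the common $\nu$-part. Both conditions cut out the same subgroup, and in fact
\[
\mathcal{Z}_{S_n}(\sigma) \;=\; S_{\{n-1,n\}}\times \mathcal{Z}_{S_{n-2}}(\nu\text{-cycles}) \;=\; \mathcal{Z}_{S_n}(\tau),
\]
so $\lambda$ and $\mu$ lie in the same $z$-class.

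\textbf{Converse, setup.} Suppose $\sigma\in\lambda$ and $\tau\in\mu$ have conjugate centralizers in $S_n$; after replacing $\tau$ by a conjugate we may assume $H := \mathcal{Z}_{S_n}(\sigma) = \mathcal{Z}_{S_n}(\tau)$. The orbits of $H$ on $\{1,\dots,n\}$ are the unions of same-length cycles of $\sigma$, so each orbit $O$ has size $\lambda_i e_i$ for some $i$. Extending a centralizer of $\sigma|_O$ in $S_O$ by the identity off $O$ produces an element of $H$, which shows $H|_O = \mathcal{Z}_{S_O}(\sigma|_O) = C_{\lambda_i}\wr S_{e_i}$; by the same argument for $\tau$, one also has $H|_O = C_{\lambda'_j}\wr S_{e'_j}$ for the corresponding part of $\mu$.

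\textbf{Key lemma and finish.} The crux is that for $|O|=s\geq 3$, the subgroup $C_a\wr S_b \leq S_s$ (with $ab=s$) determines $(a,b)$. I would establish this by computing the order of the centre: $|Z(C_a\wr S_b)|$ equals $a$ in every case (the diagonal copy of $C_a$ when $a\geq 2$; the trivial centre of $S_b$ when $a=1$ and $b=s\geq 3$), and distinct divisors $a$ of $s$ yield distinct centre orders. Hence on each orbit of size $\geq 3$ one gets $(\lambda_i,e_i)=(\lambda'_j,e'_j)$, on size-$1$ orbits only $(1,1)$ is possible, and only size-$2$ orbits permit the ambiguity between $(1,2)$ and $(2,1)$, both producing $H|_O = S_2$. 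Since any partition has at most one $1^2$ part (requiring $e_1=2$) and at most one $2^1$ part (requiring $e_2=1$), $H$ has at most two size-$2$ orbits: if none then $\lambda=\mu$; if both then $\lambda$ and $\mu$ must each contain $1^2$ and $2^1$, so $\lambda=\mu$; and if exactly one, the ambiguity forces $\{\lambda,\mu\} = \{1^2\nu,2^1\nu\}$ with $\nu$ free of parts equal to $1$ or $2$, as claimed. The only non-trivial obstacle is the centre-order computation for wreath products; the rest is combinatorial bookkeeping.
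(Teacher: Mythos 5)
Your argument is correct, and the forward direction coincides with the paper's: both exhibit representatives $\sigma$ and $\sigma\cdot(n-1,n)$ whose centralizers are literally equal. For the converse, the two proofs recover the partition from the subgroup $H=\mathcal{Z}_{S_n}(\sigma)$ via different invariants. The paper first observes that the number of $H$-orbits determines $r$, and then lets the center $Z_\lambda=\mathcal{Z}(H)$ act on $\{1,\dots,n\}$: its orbits have size $\lambda_i$ with multiplicity $e_i$, which pins down $\lambda$ except when $\lambda_1^{e_1}=1^2$ (where the factor $\langle\sigma_{\lambda_1}\rangle$ is trivial and $(1,2)$ enters the center instead). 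You work orbit-by-orbit instead: each $H$-orbit $O$ gives $H|_O=C_{\lambda_i}\wr S_{e_i}$, and for $|O|\geq 3$ the pair $\bigl(|O|,\,|\mathcal{Z}(H|_O)|\bigr)=(\lambda_ie_i,\lambda_i)$ determines $(\lambda_i,e_i)$; the only ambiguity is $|O|=2$, where $C_1\wr S_2=C_2\wr S_1=S_2$. The two invariants break down in exactly the same place (the center of $S_2$ has order $2$ rather than $1$), so the exceptional case $1^2\leftrightarrow 2^1$ emerges identically; your center-order computation is precisely the fact recorded in the paper's Section 3.1 that $\mathcal{Z}(S(a,b))\cong C_a$ whenever $a\geq 2$ or $b\geq 3$. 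A minor advantage of your version is that it makes explicit the bookkeeping for partitions containing both $1^2$ and $2^1$ (two orbits of size $2$, hence $\lambda=\mu$), a case the paper's converse passes over rather tersely.
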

\begin{corollary}
The number of $z$-classes in $S_n$ is $p(n)-\tilde p(n-2)$, where $p(n)$ is the number of partitions of $n$ and $\tilde p(n-2)$ is the number of those restricted partitions of $n-2$ in which $1$ and $2$ do not appear as its part. Thus, the number of $z$-classes in $S_n$ is equal to $p(n)-p(n-2) + p(n-3) + p(n-4) - p(n-5)$.
\end{corollary}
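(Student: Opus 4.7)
The plan has two steps. First, I would read off the count $p(n)-\tilde p(n-2)$ directly from Theorem~\ref{maintheoremsn}. Conjugacy classes of $S_n$ are parametrized by the $p(n)$ partitions of $n$, and Theorem~\ref{maintheoremsn} asserts (in both directions) that the only nontrivial coincidences of $z$-classes occur in pairs of the form $\{1^2\nu,\,2^1\nu\}$ as $\nu$ ranges over the restricted partitions of $n-2$ with no part equal to $1$ or $2$. Each such $\nu$ merges exactly two conjugacy classes into one $z$-class, so the total number of $z$-classes is $p(n)$ minus the number of such $\nu$, namely $\tilde p(n-2)$.

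Second, I would establish the arithmetic identity
\[
\tilde p(n-2) \;=\; p(n-2) - p(n-3) - p(n-4) + p(n-5).
\]
The slickest route is via generating functions: partitions whose smallest part is at least $3$ have generating function
\[
\prod_{k\geq 3}\frac{1}{1-x^k} \;=\; (1-x)(1-x^2)\sum_{m\geq 0}p(m)x^m,
\]
and since $(1-x)(1-x^2) = 1 - x - x^2 + x^3$, extracting the coefficient of $x^{n-2}$ gives the identity at once. Subtracting from $p(n)$ yields the claimed formula $p(n)-p(n-2)+p(n-3)+p(n-4)-p(n-5)$.

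As a sanity check I would also sketch a purely combinatorial derivation of the identity by inclusion--exclusion on the two conditions ``$\nu$ contains a part equal to $1$'' and ``$\nu$ contains a part equal to $2$'': removing one distinguished $1$ (respectively one $2$, respectively one of each) sets up bijections with all partitions of $n-3$, $n-4$, and $n-5$, and the sign pattern of inclusion--exclusion reproduces the formula.

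The main obstacle is essentially nonexistent here: given Theorem~\ref{maintheoremsn} the first half is pure bookkeeping, and the second half is a standard partition identity. The only care to be taken is in the boundary behaviour for small $n$, where the convention $p(k)=0$ for $k<0$ and $p(0)=1$ has to be invoked so that the identity $\tilde p(m) = p(m)-p(m-1)-p(m-2)+p(m-3)$ remains correct down to $m=0,1,2$.
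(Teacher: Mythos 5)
Your proposal is correct and matches the paper's (largely implicit) argument: the corollary is read off directly from Theorem~\ref{maintheoremsn} by noting that each restricted partition $\nu$ of $n-2$ merges exactly one pair of conjugacy classes, and the numerical formula then follows from the identity $\tilde p(m)=p(m)-p(m-1)-p(m-2)+p(m-3)$, which the paper simply quotes (with the same generating function $\prod_{i\geq 3}(1-x^i)^{-1}$) and which you verify by the standard generating-function and inclusion--exclusion arguments. Your attention to the convention $p(k)=0$ for $k<0$ at small $n$ is a sensible extra check but not a substantive difference.
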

\noindent To prove this theorem, we need to understand the centralizers better which involves the generalised symmetric group. A group $S(a,b)=C_a\wr S_b\cong C_a^b \rtimes S_b$ is called a generalised symmetric group. We will briefly introduce this group in the following section. 

Next we look at the problem of classifying $z$-classes in alternating groups $A_n$. Usually the conjugacy classes in $A_n$ are studied as a restriction of that of $S_n$. First, it is easy to determine for what partitions $\lambda=\lambda_1^{e_1}\cdots \lambda_r^{e_r}$ of $n$ the corresponding element $\sigma_{\lambda}$ is in $A_n$. This is precisely when $n-\sum e_i$ is even. We call such {\bf partitions even}. Further, when $\sigma_{\lambda} \in A_n$, the conjugacy class of $\sigma_{\lambda}$ in $S_n$ splits in two conjugacy classes in $A_n$ if and only if $\mathcal Z_{S_n}(\sigma_{\lambda}) = \mathcal Z_{A_n}(\sigma_{\lambda})$, which is, if and only if the partition $\lambda$ has all its parts distinct and odd, i.e., $e_i=1$ and $\lambda_i$ odd for all $i$. With this notation we have,
\begin{theorem}\label{maintheorem2}
Suppose $n\geq 4$. Let $\lambda=\lambda_1^{e_1}\cdots \lambda_r^{e_r}$ be an even partition of $n$. Then the following determines $z$-classes in $A_n$.
\begin{enumerate}
\item Suppose $e_i=1$ for all $i$ and all $\lambda_i$ are odd, i.e.,  $\lambda$ corresponds to two distinct conjugacy classes in $A_n$. Then, $\lambda$ corresponds to two distinct $z$-classes (corresponding to the two distinct conjugacy classes) if and only if all $\lambda_i$ are square. Else, the two split conjugacy classes form a single $z$-class.
\item Suppose either one of the $e_i\geq 2$ or at least one of the $\lambda_i$ is even, i.e., $\lambda$ corresponds to a unique conjugacy class in $A_n$. Then, $\lambda$ is $z$-equivalent to another conjugacy class if and only if $\lambda=1^3\nu$, where $\nu$ is a restricted partition of $n-3$, with all its parts distinct and odd, and in which $1$ (and $2$) does not appear as its part. Further the other equivalent class is $3^1\nu$.
\end{enumerate}
\end{theorem}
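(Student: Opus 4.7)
The plan is to work with $\Z_{A_n}(\sigma_\lambda)=\Z_{S_n}(\sigma_\lambda)\cap A_n$, exploiting the description $\Z_{S_n}(\sigma_\lambda)=\prod_i S(\lambda_i,e_i)$ from the preceding section. In case (1) this intersection equals the whole $S_n$-centralizer, while in case (2) it is the index-$2$ kernel of the sign homomorphism on that product.

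For part (1), let the split $A_n$-classes be $[\sigma^\pm]$ with $\sigma^-=h\sigma^+h^{-1}$ for some $h\in S_n\setminus A_n$, and write $K:=\Z_{A_n}(\sigma^+)=\prod_i\langle c_i\rangle$. The two split classes are $z$-equivalent in $A_n$ iff some $g\in A_n$ satisfies $gKg^{-1}=hKh^{-1}$, equivalently iff $N_{S_n}(K)$ contains an odd permutation (one can take $h^{-1}g$). Since the $\lambda_i$ are distinct, $N_{S_n}(K)=\prod_i N_{S_{\lambda_i}}(\langle c_i\rangle)$. By Zolotarev's lemma the element of $N_{S_m}(\langle c\rangle)$ that conjugates the generator $c$ to $c^k$ has sign equal to the Jacobi symbol $\left(\frac{k}{m}\right)$, which is $+1$ for every $k$ coprime to (odd) $m$ precisely when $m$ is a perfect square. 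Hence $N_{S_n}(K)\subset A_n$ iff every $\lambda_i$ is a square, proving both directions of (1).

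For part (2), the forward direction is a direct computation: choose $\sigma_\lambda$ fixing $\{1,2,3\}$ and set $\sigma_\mu:=(1,2,3)\cdot\sigma_\lambda$ (cycle type $3^1\nu$); because every part of $\nu$ is odd, the sign on $\Z_{S_n}(\sigma_\lambda)=S_3\times\prod_j\langle c_j\rangle$ reduces to the $S_3$-factor, giving $\Z_{A_n}(\sigma_\lambda)=\langle(1,2,3)\rangle\times\prod_j\langle c_j\rangle$, and one checks that this is the same subgroup of $S_n$ as $\Z_{A_n}(\sigma_\mu)$. For the converse, suppose $\sigma_\lambda$ (case 2) is $z$-equivalent to some $\sigma_\mu$ in $A_n$ with $\mu\neq\lambda$; after conjugating in $A_n$ we may take $K:=\Z_{A_n}(\sigma_\lambda)=\Z_{A_n}(\sigma_\mu)$, so $\sigma_\mu$ lies in the centre $Z(K)$. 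The heart of the argument, and its main obstacle, is a factor-by-factor analysis on the blocks $S(\lambda_i,e_i)$ of $\Z_{S_n}(\sigma_\lambda)$: one must show that a cycle-type-changing replacement within such a block can preserve the $A_n$-centralizer only when that block is $1^3$, exploiting the accidental coincidence $A_3\cong C_3$ that lets one trade three fixed points for a single $3$-cycle without altering $K$. The requirement that the remaining parts produce no spurious odd elements in $K$ then forces $\nu$ to consist of distinct odd parts $\geq 3$, yielding $\lambda=1^3\nu$ and $\mu=3^1\nu$.
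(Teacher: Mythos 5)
Your part (1) is correct and reaches the conclusion by a genuinely different, and arguably cleaner, route. You reduce $z$-equivalence of the two split classes to whether $N_{S_n}(K)$, with $K=\prod_i\langle c_i\rangle$, contains an odd permutation, then settle this block by block via Zolotarev's lemma (the sign of $c\mapsto c^k$ on an odd $m$-cycle is the Jacobi symbol, which is identically $+1$ exactly when $m$ is a square). The paper instead argues through rationality: Proposition~\ref{split-z-class} uses Corollary~\ref{corollary_perfect_square} (itself resting on Brison's Theorem~\ref{brison}) to produce, for each non-square part $\lambda_k$, an odd $s$ with $sx_ks^{-1}=x_k^m$, and it needs the separate Lemma~\ref{converse} to show that conjugacy of centralizers forces $y$ to be $A_n$-conjugate to some $x_1^{i_1}\cdots x_r^{i_r}$. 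Your normalizer criterion absorbs both directions at once; the underlying quadratic-residue computation is the same as the one hidden in Brison's theorem. Your forward direction of part (2) likewise agrees with the first paragraph of the paper's Proposition~\ref{main-prop-non-split}.

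The converse of part (2), however, contains a genuine gap: the paragraph beginning ``The heart of the argument\dots'' describes what must be proved rather than proving it. After reducing to $\mathcal Z_{A_n}(\sigma_\lambda)=\mathcal Z_{A_n}(\sigma_\mu)=K$ and noting $\sigma_\mu\in\mathcal Z(K)$, one still needs: (i) a determination of $\mathcal Z(\mathcal Z_{A_n}(\sigma_\lambda))$ for every even partition $\lambda$ --- this is Theorem~\ref{Exceptions}, whose proof occupies Lemmas~\ref{RestrictionOnFixedPoints}--\ref{EvenPartsNotAllowed2}, and it is not a purely factor-by-factor matter, since whether a non-diagonal element of one block can be central depends on whether \emph{other} blocks supply odd elements to correct signs (this is exactly why two repeated parts, or one even part elsewhere, kill the exception); (ii) an argument that the orbits of $\mathcal Z(K)$ on $\{1,\dots,n\}$ recover $\lambda$ in the non-exceptional cases, including the ambiguity at $\lambda_1^{e_1}=1^2$ (Propositions~\ref{Anpartition} and~\ref{part-determine}); and (iii) the elimination of the exceptional shapes $2^2\nu$ and $1^12^2\nu$, for which $\mathcal Z_{A_n}(\sigma_\lambda)$ is abelian and strictly larger than $Z_\lambda\cap A_n$ just as for $1^3\nu$ --- the paper excludes these by comparing centralizer orders among the candidate partitions $1^4\nu$, $1^13^1\nu$, $2^2\nu$ sharing the same first orbit. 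Without (i)--(iii), the assertion that only the block $1^3$ admits a centralizer-preserving change of cycle type is a statement of the theorem, not a proof of it.
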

\noindent We remark that $3^1\nu$ could be of the first kind. For example, in $A_8$ the partitions $1^35^1$ and $3^15^1$ give same $z$-class. Further, the conjugacy class $3^15^1$ splits into two but both fall in a single $z$-class. We list few more examples (using GAP) in a table in Section~\ref{sum-squares}. We also note that $\nu$ could have its first part $3$, in that case while writing $3^1\nu$ we appropriately absorb the power of $3$. We denote by $\epsilon(n)$, the number of partitions of $n$ with all of its parts distinct, odd and square.  We list the values of $\epsilon(n)$ for small values in a table in Section~\ref{sum-squares}.
\begin{corollary}
The number of $z$-classes in $A_n$ is 
$$cl(A_n) - (q(n)+\tilde q(n-3)) + \epsilon(n),$$
where $cl(A_n)=\frac{p(n)+3q(n)}{2}$ is the number of conjugacy classes in $A_n$, $q(n)$ is the number of partitions of $n$ which has all parts distinct and odd, $\tilde q(m)$ is the number of restricted partitions of $m$, with all parts distinct, odd and which do not have $1$ (and $2$) as its part. 
\end{corollary}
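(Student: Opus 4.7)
The plan is to count $z$-classes by starting from $cl(A_n)$ and subtracting the number of \emph{collisions} --- instances where two distinct $A_n$-conjugacy classes share a $z$-class --- as classified by Theorem~\ref{maintheorem2}. That theorem organises collisions into exactly two sources: case (1), in which a partition with distinct odd parts produces two $A_n$-classes that collapse to one $z$-class; and case (2), in which the class of $1^3\nu$ is $z$-equivalent to the class of $3^1\nu$, for a restricted partition $\nu$ of $n-3$ with distinct odd parts avoiding $1$ and $2$.

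For case (1), each partition $\lambda$ with all parts distinct and odd yields two $A_n$-classes, and these fuse into a single $z$-class unless every $\lambda_i$ is a perfect square. The partitions whose classes do \emph{not} fuse are precisely the $\epsilon(n)$ partitions with distinct odd square parts, so case (1) contributes a reduction of $q(n)-\epsilon(n)$. For case (2), the valid pairings are indexed by the $\tilde q(n-3)$ admissible $\nu$, so the case (2) contribution should be a further reduction of $\tilde q(n-3)$; adding the two contributions and rearranging then yields the claimed formula.

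The main obstacle is verifying that nothing is double-counted in the overlap of the two cases. Since the parts of $1^3\nu$ include $1$ while those of $3^1\nu$ do not, the two partitions in any case (2) pair are never interchanged and each pair is indexed by a unique $\nu$. The delicate situation is when $3\notin\nu$: then $3^1\nu$ has distinct odd parts and is itself a case (1) partition which already contributes a reduction (since $3$ is not a square), and case (2) merely records the additional reduction arising when the $1^3\nu$ class joins the already-merged split pair of $3^1\nu$. When $3\in\nu$, $3^1\nu$ absorbs to $3^2\nu'$, which is a case (2) partition; the pair $(1^3\nu,3^2\nu')$ then contributes a single reduction disjoint from case (1). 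Either way each $\nu$ contributes exactly one unit of reduction beyond case (1), so the accounting is consistent and the total reduction is $(q(n)-\epsilon(n))+\tilde q(n-3)$, as required.
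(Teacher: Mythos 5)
Your proposal is correct and follows essentially the same route the paper intends: the corollary is a direct count of the merges described by Theorem~\ref{maintheorem2}, with case (1) contributing a reduction of $q(n)-\epsilon(n)$ and case (2) a further $\tilde q(n-3)$. Your discussion of the overlap (the $3\notin\nu$ versus $3\in\nu$ situations) matches the paper's own remarks, e.g.\ the $1^35^1$, $3^15^1$ example in $A_8$, and correctly confirms there is no double counting.
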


Let $G$ be a finite group. An element $g\in G$ is called rational if $g$ is conjugate to $g^m$ for all $m$ with property $(m,o(g))=1$ where $o(g)$ is the order of $g$. Clearly if $g$ is rational then all of its conjugates are rational. Thus a conjugacy class of $G$ is said to be rational if it is a conjugacy class of a rational element. It is believed that, for a finite group $G$, the number of conjugacy classes which are rational is related to the number of rational-valued complex irreducible characters of the group $G$ (for example, see Theorem A in~\cite{nt}). A group of which all elements are rational (and in that case, all complex irreducible characters are rational-valued) is called a rational group or $\mathbb Q$-group (see~\cite{kl}). The alternating groups $A_n$ play an important role in determining simple groups which are rational (see Theorem A~\cite{fs}). There is a related notion of rational class in a group which comes from an equivalence relation. For a finite group $G$, a rational class of an element $g$ is a subset containing all elements of $G$ that are conjugate to $g^m$, where $(m,o(g))=1$. Thus the rational class of $g$ can be thought of as the conjugacy class of cyclic subgroup $\langle g \rangle$ of $G$. A conjugacy class which is rational is a rational class. However the converse need not be true. It is well known that, for a finite group $G$, the number of isomorphism classes of irreducible representations of $G$ over $\mathbb Q$ is equal to the number of rational classes of $G$ (see Corollary 1, Section 13.1~\cite{se}). The symmetric group $S_n$ is rational. Alternating groups are not rational (see Corollary B.1~\cite{fs}). The rational-valued complex irreducible characters for $A_n$ are discussed in~\cite{br} and~\cite{pr}. In this paper we determine conjugacy classes which are rational and the rational classes in alternating group. With notation as above,
\begin{theorem}\label{maintheorem3}
Suppose $n\geq 4$. Let $\tilde C$ be a conjugacy class in $A_n$ and corresponding partition be $\lambda=\lambda_1^{e_1} \cdots \lambda_r^{e_r}$ of $n$. 
\begin{enumerate}
\item 
Then the conjugacy class $\tilde C$ is rational in $A_n$ if and only if one of the following happens:  
\begin{enumerate}
\item either one of the $e_i\geq 2$ or one of the $\lambda_i$ is even, or,
\item when all $\lambda_i$ are distinct (i.e., $e_i=1$ for all $i$) and odd, the product $\displaystyle \prod_{i=1}^r \lambda_i$ is a perfect square. In this case, $\lambda$  corresponds to two conjugacy classes in $A_n$ and both are simultaneously rational (or non-rational).
\end{enumerate}
\item All conjugacy classes which are rational are rational classes. When $\tilde C$ is not a rational conjugacy class in $A_n$, the conjugacy class $C$ in $S_n$ containing $\tilde C$ is a rational class in $A_n$. 
\end{enumerate}
\end{theorem}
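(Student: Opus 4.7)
The plan is to exploit the fact that $S_n$ is a rational group, so for every $\sigma \in S_n$ and every $m$ coprime to $o(\sigma)$ one has $\sigma^m$ $S_n$-conjugate to $\sigma$. Thus, whether an $A_n$-class $\tilde C$ contained in the $S_n$-class $C$ of $\sigma$ is rational reduces to the question of whether some $S_n$-conjugator carrying $\sigma$ to $\sigma^m$ can be chosen inside $A_n$.

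Case (1)(a) is then immediate, since the hypothesis forces $\tilde C = C$, so $\sigma^m \in \tilde C$ automatically. For case (1)(b), one has $C = \tilde C \sqcup \tilde C'$ in $A_n$, and since every $\lambda_i$ is odd, the centralizer
$$\mathcal{Z}_{S_n}(\sigma) = \langle \sigma_1 \rangle \times \cdots \times \langle \sigma_r \rangle$$
is contained in $A_n$. Any two $S_n$-conjugators between $\sigma$ and $\sigma^m$ differ by an element of $\mathcal{Z}_{S_n}(\sigma)$, so their signs agree. Consequently $\sigma^m$ lies in $\tilde C$ (as opposed to $\tilde C'$) precisely when some, equivalently every, conjugator $\tau$ satisfies $\mathrm{sgn}(\tau) = +1$.

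I would then produce $\tau$ explicitly and compute its sign. Writing $\sigma_i = (a_{i,0}, a_{i,1}, \ldots, a_{i,\lambda_i - 1})$ and setting $\tau(a_{i,j}) = a_{i,\, jm \bmod \lambda_i}$ (with $\tau$ fixing everything outside the support of $\sigma$), a direct verification gives $\tau \sigma \tau^{-1} = \sigma^m$. On the support of $\sigma_i$, $\tau$ acts as multiplication by $m$ on $\mathbb{Z}/\lambda_i$, which by Zolotarev's lemma (in its extension to all odd moduli) has sign equal to the Jacobi symbol $\left(\tfrac{m}{\lambda_i}\right)$. Multiplicativity of the Jacobi symbol then yields
$$\mathrm{sgn}(\tau) = \prod_{i=1}^r \left(\tfrac{m}{\lambda_i}\right) = \left(\tfrac{m}{\prod_{i=1}^r \lambda_i}\right),$$
and this is $+1$ for every $m$ coprime to $o(\sigma) = \mathrm{lcm}(\lambda_i)$ if and only if the Jacobi character $\left(\tfrac{\cdot}{\prod \lambda_i}\right)$ is identically trivial, if and only if $\prod_i \lambda_i$ is a perfect square. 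The simultaneous (non-)rationality of $\tilde C$ and $\tilde C'$ is then obtained by conjugating the entire setup by any fixed odd permutation that swaps the two $A_n$-classes.

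For part (2), the first claim is almost tautological: if $\tilde C$ is rational then $\sigma^m \in \tilde C$ for every admissible $m$, so the rational class of $\sigma$ is $\tilde C$ itself. If instead $\tilde C$ is not rational, part (1)(b) provides some $m$ with $\sigma^m \in \tilde C'$; combined with $\sigma^m \in C$ for all admissible $m$ (again by rationality of $S_n$), the rational class of $\sigma$ meets both $\tilde C$ and $\tilde C'$ and therefore equals $C = \tilde C \cup \tilde C'$. The step I expect to be the main obstacle is the Zolotarev--Jacobi sign computation on each cycle, since the rest of the argument is essentially bookkeeping between $S_n$- and $A_n$-conjugacy and between conjugacy classes and rational classes.
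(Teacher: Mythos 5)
Your proposal is correct, and it takes a genuinely different (and more self-contained) route than the paper. The paper reduces everything to the Weyl-group criterion $W_{A_n}(\sigma)\cong \Aut(\langle\sigma\rangle)$ and then quotes Brison's Theorem 4.3 as a black box: $N_{S_n}(\langle\sigma\rangle)=N_{A_n}(\langle\sigma\rangle)$ if and only if the parts are distinct, odd, and have square product. You instead prove the key equivalence from scratch: you build the explicit conjugator $\tau$ sending $\sigma$ to $\sigma^m$, note that all conjugators have the same sign because $\mathcal Z_{S_n}(\sigma)\subseteq A_n$ in the split case, and compute $\mathrm{sgn}(\tau)=\prod_i\left(\frac{m}{\lambda_i}\right)=\left(\frac{m}{\prod_i\lambda_i}\right)$ via the Frobenius--Zolotarev lemma, so that rationality amounts to triviality of this Jacobi character, i.e.\ to $\prod_i\lambda_i$ being a square. (The character depends only on $m$ modulo $\mathrm{lcm}(\lambda_i)=o(\sigma)$, so the quantifier over admissible $m$ matches up correctly.) Your handling of the non-split case is even slighter than the paper's Proposition 4.3: you observe $\tilde C=C$ as sets so $\sigma^m\in\tilde C$ automatically, whereas the paper adjusts an odd conjugator by an odd centralizer element; both are fine. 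Part (2) is handled identically in substance. What your approach buys is independence from Brison's paper and an explicit sign formula; what the paper's buys is brevity and a uniform framework (normalizers of cyclic subgroups) that it reuses elsewhere. The only point you should make explicit in a written version is the extension of Zolotarev's lemma from prime to arbitrary odd modulus (due to Frobenius), together with the multiplicativity $\left(\frac{m}{ab}\right)=\left(\frac{m}{a}\right)\left(\frac{m}{b}\right)$ of the Jacobi symbol in the lower argument, since that is where the whole argument is carried.
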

\noindent We denote by $\delta(n)$, the number of partitions of $n$ with all parts distinct, odd and the product of parts is a perfect square. We list the values of $\delta(n)$ for small values in a table in Section~\ref{sum-squares} which is also there in~\cite{br}. 
\begin{corollary}
For the alternating group $A_n$ with $n\geq 4$,
\begin{enumerate}
\item the number of conjugacy classes which are rational is $cl(A_n)-2q(n)+2\delta(n)$, and
\item the number of rational classes is $cl(A_n) - q(n) + \delta(n)$.
\end{enumerate}
\end{corollary}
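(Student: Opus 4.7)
The corollary is a bookkeeping consequence of Theorem~\ref{maintheorem3} together with the known partition count of $cl(A_n)$, so my plan is to set up the counts carefully and then add them up. First I would separate the conjugacy classes of $A_n$ into the two families distinguished in Theorem~\ref{maintheorem3}(1): those coming from even partitions $\lambda$ with some $e_i \geq 2$ or with some even $\lambda_i$, which each contribute a single conjugacy class in $A_n$, versus those coming from partitions with all $\lambda_i$ distinct and odd, each of which contributes two conjugacy classes. Since the number of the latter kind of partitions is $q(n)$, the first family accounts for $cl(A_n) - 2q(n)$ conjugacy classes and the second family for $2q(n)$ conjugacy classes.

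For part~(1), by Theorem~\ref{maintheorem3}(1)(a) every class in the first family is rational, giving $cl(A_n) - 2q(n)$ rational classes. For the second family, Theorem~\ref{maintheorem3}(1)(b) tells us that both halves of the split pair are rational precisely when $\prod_i \lambda_i$ is a perfect square, and otherwise neither is; since the number of partitions with distinct odd parts whose product is a perfect square is $\delta(n)$, this family contributes $2\delta(n)$ further rational classes. Adding gives
\[
  (cl(A_n)-2q(n)) + 2\delta(n) = cl(A_n) - 2q(n) + 2\delta(n),
\]
which is the claimed count.

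For part~(2), I would use Theorem~\ref{maintheorem3}(2), which says that every rational conjugacy class of $A_n$ is already a rational class, and that each non-rational conjugacy class is paired with the other half of its $S_n$-class to form a single rational class. In other words, the non-rational conjugacy classes come in pairs, each pair merging into one rational class. The number of such pairs equals the number of partitions with distinct odd parts whose product is \emph{not} a square, i.e.\ $q(n) - \delta(n)$. Thus the total number of rational classes is
\[
  \bigl(cl(A_n) - 2q(n) + 2\delta(n)\bigr) + \bigl(q(n) - \delta(n)\bigr) = cl(A_n) - q(n) + \delta(n),
\]
as asserted. No genuine obstacle arises here; the only care needed is to ensure that the two halves of a split conjugacy class are always either simultaneously rational or simultaneously non-rational, which is exactly the content of Theorem~\ref{maintheorem3}(1)(b), so that the pairing invoked in the last step is well-defined.
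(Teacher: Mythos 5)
Your proposal is correct and follows exactly the bookkeeping the paper intends: the corollary is stated as an immediate consequence of Theorem~\ref{maintheorem3}, and your decomposition into the $cl(A_n)-2q(n)$ non-split classes (all rational), the $2\delta(n)$ rational split classes, and the $q(n)-\delta(n)$ non-rational split pairs each merging into one rational class is precisely the intended count. The one point you rightly flag --- that the two halves of a split class are simultaneously rational or not --- is indeed supplied by Theorem~\ref{maintheorem3}(1)(b), so there is no gap.
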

\noindent The character theory of $A_n$ is well understood. We use the notation and results from~\cite{pr} and conclude the following,
\begin{theorem}\label{rational-character}
Suppose $n\geq 4$. Then, the number of conjugacy classes in $A_n$ which are rational is same as the number of rational-valued complex irreducible characters. 
\end{theorem}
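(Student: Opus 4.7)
The plan is to exploit the classical parametrisation of the irreducible complex characters of $A_n$ by partitions of $n$, together with the explicit character formulas recorded in~\cite{pr}. Every irreducible character of $S_n$ is of the form $\chi^\lambda$ for a partition $\lambda$ of $n$, and since $S_n$ is a $\mathbb{Q}$-group each $\chi^\lambda$ is rational-valued. The restriction $\chi^\lambda|_{A_n}$ is irreducible exactly when $\lambda\neq \lambda'$, with $\chi^\lambda|_{A_n}=\chi^{\lambda'}|_{A_n}$ in that case, and for self-conjugate $\lambda=\lambda'$ it splits as $\chi^\lambda_{+}+\chi^\lambda_{-}$ into two distinct irreducibles. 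The non-split restrictions are rational-valued simply because restrictions of rational-valued characters are rational-valued, so the problem reduces to deciding which of the split characters $\chi^\lambda_{\pm}$ are rational-valued.

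For self-conjugate $\lambda$ with diagonal hook lengths $h_1>\cdots>h_k$ (distinct odd positive integers summing to $n$), the Frobenius formula, as recorded in~\cite{pr}, says that $\chi^\lambda_{\pm}$ agrees with $\tfrac12\chi^\lambda|_{A_n}$ everywhere except on the two $A_n$-classes into which the $S_n$-class of cycle type $(h_1,\ldots,h_k)$ splits, where it takes the values
\[
\tfrac12\bigl(\eta\pm\sqrt{\eta\,h_1h_2\cdots h_k}\bigr),\qquad \eta=(-1)^{(n-k)/2}.
\]
Rationality of $\chi^\lambda_{\pm}$ is therefore equivalent to $\eta\prod_i h_i$ being a perfect square in $\mathbb{Q}$, and since $\prod h_i>0$ is odd this means $\eta=1$ and $\prod h_i$ itself a perfect square. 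The nuisance sign $\eta$ is handled by a short $\bmod\,4$ computation: setting $b=\#\{i:h_i\equiv 3\pmod 4\}$ one obtains $\prod h_i\equiv (-1)^b\pmod 4$ and, from $n\equiv a-b\pmod 4$ where $a=k-b$, the identity $\eta=(-1)^b$. Hence any square value of $\prod h_i$ (which is automatically $\equiv 1\pmod 4$) forces $b$ to be even and hence $\eta=1$ automatically. Consequently $\chi^\lambda_{\pm}$ is rational-valued if and only if $\prod h_i$ is a perfect square, and by the classical bijection between self-conjugate partitions of $n$ and partitions of $n$ into distinct odd parts this contributes exactly $2\delta(n)$ rational-valued split characters.

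Summing the two contributions yields $(p(n)-q(n))/2+2\delta(n)=cl(A_n)-2q(n)+2\delta(n)$ rational-valued irreducible characters of $A_n$, which is precisely the count of rational conjugacy classes given by the corollary to Theorem~\ref{maintheorem3}. The step requiring most care is the $\bmod\,4$ argument that eliminates $\eta$ from the rationality condition: without it one would only get that rationality of $\chi^\lambda_{\pm}$ imposes the two conditions $\eta=1$ and $\prod h_i$ a square separately, and the character count would not match the count of rational conjugacy classes.
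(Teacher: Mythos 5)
Your proposal is correct and follows essentially the same route as the paper: rationality of the non-split restrictions is automatic, rationality of the split pair $\chi^\lambda_{\pm}$ reduces to $\eta\prod h_i$ being a square, and your $\bmod\,4$ elimination of the sign $\eta$ is exactly the content of the paper's lemma that $|Z_\lambda|$ a square forces $e_\lambda=1$, after which the count matches that of rational conjugacy classes.
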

\noindent This theorem is proved in Section~\ref{rational-charactersAn}. We also acknowledge that we have used GAP~\cite{GAP4} on several occasions to verify our computations and results.

{\bf Acknowledgement :} The authors would like to thank Gerhard Hiss and Alexander Hulpke for wonderful discussion on GAP during the workshop ``Group theory and computational methods" held at ICTS Bangalore, India in November 2016.  

\section{Restricted partitions}
We require certain kind of restricted partitions which we introduce in this section. We denote by $p(m)$, the number of partitions of positive integer $m$. To set the notation clearly, a partition of $m$ is $\lambda=m_1^{e_1}\cdots m_r^{e_r}$ where $1\leq m_1<\ldots < m_r \leq m$, $e_i\geq 1 \forall i$ and $m=\sum_{i=1}^r e_im_i$. Sometimes this is also denoted as $\lambda\vdash m$ or $m_1^{e_1}\cdots m_r^{e_r}\vdash m$. We clarify that the partition written as $1^12^1$ is same as $1.2$ but, in this case, latter notation is confusing if written without a dot. For us the significance of partitions is due to its one-one correspondence with conjugacy classes of the symmetric group $S_m$. Let $\tilde p(m)$ be the number of those partitions of $m$ in which $1$ and $2$ do not appear as its part, i.e., 
$$\tilde p(m)=\left |\left \{\lambda = m_1^{e_1}\cdots m_r^{e_r} \vdash m \mid  m_1 \geq 3 \right\} \right|.$$
Here we list down values of $\tilde p(m)$ for some small values.  
\vskip3mm
\begin{center}
\begin{tabular}{|c|c||c|c||c|c||c|c|}
\hline
$m$ & $\tilde p(m)$ &$m$& $\tilde p(m)$ & $m$& $\tilde p(m)$ &$m$ & $\tilde p(m)$\\ 
 \hline
1 & 0 & 6 & 2 & 11 &6 & 16 &21\\ \hline
2 & 0 & 7 & 2 & 12 & 9 &17 & 25 \\ \hline
3 & 1 & 8 & 3 & 13 & 10 &18 & 33\\ \hline
4 & 1 & 9 & 4 & 14 & 13 & 19 & 39 \\ \hline
5 & 1 & 10  & 5 & 15 & 17 & 20 & 49  \\ \hline
\end{tabular}
\end{center}
\vskip3mm
The generating function for $\tilde p(m)$ is 
$$\prod_{i\geq 3}\frac{1}{1-x^i}$$
and a formula to compute $\tilde p(m)$ in terms of partition function is
$$\tilde p(m)=p(m) - p(m-1) - p(m-2) + p(m-3).$$
This is a well known sequence in OEIS database (see~\cite{oeis}). This will be used in the study of $z$-classes of symmetric groups later.

Now we introduce the function $q(m)$. For a given integer $m$, the value of $q(m)$ is the number of those partitions of $m$ which have all of its parts distinct and odd, i.e., 
$$q(m)=|\{\lambda = m_1^{1}\cdots m_r^{1} \vdash m \mid m_i {\ \rm odd\ } \forall i\}|.$$
This number is same as the number of self-conjugate partitions.
For us this would correspond to those partitions which give split conjugacy classes in $A_n$. Now we introduce $\tilde q(m)$ which is the number of those restricted partitions of $m$ which have all its parts distinct, odd and $1$ (and $2$) does not appear as its part. The following table gives values of $\tilde q(m)$ for some values of $m$.
\vskip3mm
\begin{center}
\begin{tabular}{|c|c|c||c|c|c||c|c|c||c|c|c|}
\hline
$m$ & $q(m)$ &$\tilde q(m)$  & $m$ & $q(m)$ &$\tilde q(m)$ &$m$ & $q(m)$ &$\tilde q(m)$  & $m$ & $q(m)$ &$\tilde q(m)$\\  
 \hline
1 & 1 & 0 & 6 & 1 & 0& 11 &2 & 1 & 16 &5 &3 \\ \hline
2 & 0 & 0  & 7 & 1 & 1&12 &3 & 2 & 17 & 5& 2\\ \hline
3 & 1 & 1  & 8 & 2 & 1 &13 & 3 &1 & 18 & 5 &3\\ \hline
4 & 1 & 0 & 9 & 2 & 1 &14 & 3 &2 & 19 & 6 &3\\ \hline
5 & 1 & 1 & 10& 2 & 1& 15 & 4 &2 & 20 & 7 &4\\ \hline
\end{tabular}
\end{center}
\vskip3mm
The generating function for $q(m)$ is $\prod_{i\geq 0} (1+x^{2i+1})$ and the generating function for $\tilde q(m)$ is $\prod_{i\geq 1} (1+x^{2i+1})$.

\section{Symmetric groups}
In this section we classify $z$-classes in $S_n$. Since the centralizers are a product of generalised symmetric groups, we begin with a brief introduction to them.
\subsection{Generalised Symmetric Groups}\label{gsg}
The group $S(a,b)= C_a\wr S_b$, where $C_a$ is a cyclic group and $S_b$ is a symmetric group, is called a generalised symmetric group. This group is an example of wreath product and has been studied well in literature. Since the centralizer subgroups in the symmetric group are a product of generalised symmetric groups, we need to have more information about this group. For the sake of clarity, let us begin with defining this group. Consider the action of symmetric group $S_b$ on the direct product $C_a^b=C_a\times\cdots\times C_a$ given by permuting the components:
$$\sigma(x_1,\ldots, x_b)=(x_{\sigma(1)},\ldots, x_{\sigma(b)}).$$
Then the generalised symmetric group is $S(a,b)= C_a\wr S_b := C_a^b \rtimes S_b$.
Hence the multiplication in this group is given as follows:
$$(x_1,\cdots, x_b, \sigma)(y_1,\cdots,y_b, \tau) = (x_1y_{\sigma^{-1}(1)},\cdots, x_by_{\sigma^{-1}(b)}, \sigma\tau).$$
This group has a monomial matrix (each row and each column has exactly one non-zero entry) representation and it can be thought of as a subgroup of $GL_b(\mathbb C)$, in particular as a subgroup of monomial group. Monomial group is well known in the study of $GL_b(\mathbb C)$ as an algebraic group. This gives rise to the Weyl group and Bruhat decomposition. Let $T$ be the diagonal maximal torus (set of all diagonal matrices), then the monomial group is the normaliser $N_{GL_b(\mathbb C)}(T)$. The Weyl group is defined as $W=N_{GL_b(\mathbb C)}(T)/T\cong S_b$. 

Let $D$ be the set of those diagonal matrices in  $GL_b(\mathbb C)$ of which each diagonal entry is an $a$th roots of unity, i.e., each diagonal entry is from the set $\{\zeta^i\mid 0\leq i \leq a-1 \}$ where $\zeta$ is an $a$th primitive root of unity. Clearly, $D\cong C_a^b$ and the group $S(a,b)\cong N_{GL_b(\mathbb C)}(D)$. Thus, $S(a,b)$ is the set of those monomial matrices  which have non-zero entries coming from $a$th roots of unity. The following can be easily verified: 
\begin{enumerate}
\item the center $\mathcal Z(S(a,b))=\{\lambda.Id \mid \lambda^a=1\}\cong C_a$ if $a\geq 2$ or $b\geq 3$.
\item $N_{GL_b(\mathbb C)}(D)/D\cong S_b$.
\end{enumerate}
Representation theory of the generalised symmetric group has been studied by Osima~\cite{os}, Can~\cite{ca}, Mishra and Srinivasan~\cite{ms}, just to mention a few. 

\subsection{$z$-classes in $S_n$}\label{z-class-Sn}

In this section we aim to prove Theorem~\ref{maintheoremsn}. For $n=3$ and $4$ the conjugacy classes and $z$-classes are same. Thus, if necessary, we may assume $n\geq 5$ in this section. 
Let $\lambda = \lambda_1^{e_1} \lambda_2^{e_2}\cdots \lambda_r^{e_r}$ be a partition of $n$. Let us denote the partial sums as $n_i=\sum_{j=1}^{i}\lambda_je_j$ and $n_0=0$. We may represent an element of $S_n$ corresponding to $\lambda$ as a product of cycles and we choose a representative of class denoted as $\sigma_{\lambda}=\sigma_{\lambda_1} \cdots \sigma_{\lambda_i}\cdots \sigma_{\lambda_r}$ where 
$$\sigma_{\lambda_i}=\underbrace{(n_{i-1}+1,\cdots, n_{i-1}+\lambda_i)\cdots (n_{i-1}+(e_i-1) \lambda_i+1,\cdots, n_{i-1}+e_i \lambda_i)}_{e_i} $$
is a product of $e_i$ many disjoint cycles, each of length $\lambda_i$. 
Then the centralizer of this element is (see~\cite{jk} Equation 4.1.19)
$$\mathcal Z_{S_n}(\lambda):=\mathcal Z_{S_n}(\sigma_{\lambda}) \cong \prod_{i=1}^r C_{\lambda_i} \wr S_{e_i},$$ where $C_{\lambda_i}$ is a cyclic group of size $\lambda_i$ and the size of the centralizer is given by the formula $|\mathcal Z_{S_n}(\lambda)|= \prod_{i=1}^r (\lambda_i^{e_i} . e_i!)$. Further, with the above chosen representative element the center of $ \mathcal Z_{S_n}(\sigma_{\lambda})$ is, 
$$Z_{\lambda}=\mathcal Z (\mathcal Z_{S_n}(\sigma_{\lambda})) = \begin{cases} \prod_{i=1}^r \langle \sigma_{\lambda_i} \rangle & \mathrm{if}\ \lambda_1^{e_1}\neq 1^2 \\ \langle (1,2)\rangle \times \prod_{i=2}^r \langle \sigma_{\lambda_i} \rangle & \mathrm{when} \ \lambda_1^{e_1}= 1^2.
 \end{cases}$$
Note that if $\lambda_1=1$ then the element $\sigma_{\lambda_1}=1$.
\begin{lemma}\label{lemmar}
Let  $\lambda= \lambda_1^{e_1} \lambda_2^{e_2}\cdots \lambda_r^{e_r}$ be a partition of $n$. Then $\mathcal Z_{S_n}(\lambda)$ determines $r$ uniquely.
\end{lemma}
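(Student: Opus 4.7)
The plan is to recover $r$ from the action of $\mathcal{Z}_{S_n}(\sigma_\lambda)$ on $\{1,\ldots,n\}$ rather than from its abstract group structure, since the number of orbits of a subgroup of $S_n$ on the underlying set is invariant under conjugation inside $S_n$, and this is all we need in order for $r$ to be a $z$-class invariant.

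First, I would recall that, with the chosen representative $\sigma_\lambda = \sigma_{\lambda_1}\cdots\sigma_{\lambda_r}$ described just before the lemma, the $i$-th direct factor $C_{\lambda_i}\wr S_{e_i}$ of the centralizer is naturally supported on the $\lambda_i e_i$ points appearing in the $e_i$ cycles of length $\lambda_i$ of $\sigma_\lambda$, and acts as the identity on the remaining points. The supports for distinct indices $i$ are pairwise disjoint and partition $\{1,\ldots,n\}$ according to cycle length, so the orbits of $\mathcal{Z}_{S_n}(\sigma_\lambda)$ on $\{1,\ldots,n\}$ are obtained by concatenating the orbits of each factor on its own support.

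The second step is to verify that each factor $C_{\lambda_i}\wr S_{e_i}$ is transitive on its support. This is immediate from the wreath product description: the base group $C_{\lambda_i}^{e_i}$ already acts transitively within each individual $\lambda_i$-cycle via the powers of $\sigma_{\lambda_i}$, while the top group $S_{e_i}$ permutes the $e_i$ cycles in all possible ways, so every two points in the support of the $i$-th factor lie in a common orbit. Therefore the total number of orbits of $\mathcal{Z}_{S_n}(\sigma_\lambda)$ on $\{1,\ldots,n\}$ is exactly $r$.

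Finally, conjugating a subgroup of $S_n$ by any $g\in S_n$ only relabels the points of $\{1,\ldots,n\}$ and so preserves the number of orbits; hence the conjugacy class in $S_n$ of $\mathcal{Z}_{S_n}(\sigma_\lambda)$ (equivalently, any element of its $z$-class of centralizers) determines $r$ uniquely. No step here looks genuinely difficult; the one point that deserves a brief explicit check is the transitivity of each wreath product factor on its support, which is the routine wreath-product computation indicated above.
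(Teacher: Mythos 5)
Your proposal is correct and follows essentially the same route as the paper: the paper also recovers $r$ as the number of orbits of $\mathcal Z_{S_n}(\sigma_\lambda)$ acting on $\{1,\ldots,n\}$, with the orbits being the blocks $\{n_{i-1}+1,\ldots,n_i\}$. You merely spell out the transitivity of each wreath-product factor on its support and the conjugation-invariance of the orbit count, which the paper leaves implicit.
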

\begin{proof}
Consider the natural action of  $G=\mathcal Z_{S_n}(\lambda)$ on the set $\{1,2,\ldots, n\}$ as a subgroup of $S_n$. Since $G\cong \prod_{i=1}^r C_{\lambda_i} \wr S_{e_i}$, the orbits are $\{\{1,\ldots, n_1\}, \{ n_1+1, \ldots, n_2\},\ldots\}$. The number of orbits is exactly $r$. 
\end{proof}
\begin{lemma}\label{lemmapart}
Let  $\lambda= \lambda_1^{e_1} \lambda_2^{e_2}\cdots \lambda_r^{e_r}$ be a partition of $n$ and $\lambda_1^{e_1}\neq 1^2$. Let $Z_{\lambda}$ be the center of $\mathcal Z_{S_n}(\lambda)$. Then $Z_{\lambda}$ determines the partition $\lambda$ uniquely. 
\end{lemma}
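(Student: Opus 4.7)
The plan is to reconstruct $\lambda$ from the orbit structure of the natural action of $Z_{\lambda}\subseteq S_n$ on $\{1,2,\ldots,n\}$. Since $\lambda_1^{e_1}\neq 1^2$, the formula for the center stated just before the lemma gives
$$Z_{\lambda}=\prod_{i=1}^r \langle \sigma_{\lambda_i}\rangle,$$
so the action of $Z_{\lambda}$ on the base set is completely determined by the cycle structures of the $\sigma_{\lambda_i}$, whose supports are pairwise disjoint by the explicit choice of representative.

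The key observation to make is that for each index $i$ with $\lambda_i\geq 2$, the element $\sigma_{\lambda_i}$ is (by construction) a product of $e_i$ disjoint cycles of length $\lambda_i$ supported on the block $\{n_{i-1}+1,\ldots,n_i\}$; therefore the orbits of $\langle \sigma_{\lambda_i}\rangle$ on this block are exactly these $e_i$ cycles, each of size $\lambda_i$. Combined with the disjointness of the supports, I conclude that every non-singleton orbit of $Z_{\lambda}$ on $\{1,\ldots,n\}$ has size $\lambda_i$ for some $i\geq 1$ with $\lambda_i\geq 2$, and conversely each such $\lambda_i$ contributes exactly $e_i$ orbits of size $\lambda_i$.

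It remains to account for the possible fixed points. If $\lambda_1\geq 2$ then $Z_{\lambda}$ has no fixed points. If $\lambda_1=1$ then (by the standing hypothesis $\lambda_1^{e_1}\neq 1^2$ we have $e_1=1$ or $e_1\geq 3$, and) $\sigma_{\lambda_1}=1$, so the $e_1$ points of the first block are fixed pointwise by the whole of $Z_{\lambda}$, giving $e_1$ singleton orbits. Reading off the multiset of orbit sizes, one recovers exactly the multiset $\{\lambda_i \text{ with multiplicity } e_i : 1\leq i\leq r\}$, i.e.\ the partition $\lambda$. Since the orbit structure is a conjugation invariant, this shows that $Z_{\lambda}$ determines $\lambda$ uniquely.

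The only subtlety, and the reason the hypothesis $\lambda_1^{e_1}\neq 1^2$ is needed, is the handling of fixed points: in the excluded case $\lambda_1^{e_1}=1^2$ the formula for $Z_{\lambda}$ acquires an extra factor $\langle(1,2)\rangle$ which would destroy the singleton orbits and prevent the reconstruction. Once this case is set aside, the rest of the argument is a direct reading of orbit sizes and requires no further work.
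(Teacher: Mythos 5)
Your proof is correct and takes essentially the same route as the paper: the paper's own (two-line) argument also lets $Z_{\lambda}$ act on $\{1,\ldots,n\}$ and reads off the partition from the multiset of orbit sizes, each $\lambda_i$ occurring $e_i$ times. You merely supply the details (the orbit decomposition within each block, the fixed-point bookkeeping when $\lambda_1=1$, and why $1^2$ must be excluded) that the paper leaves implicit.
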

\begin{proof}
Let us make $Z_{\lambda}$ act on the set $\{1,2,\ldots,n\}$. Then the orbits are of size $\lambda_i$ and each of them occur $e_i$ many times. This determines the partition $\lambda$.
\end{proof}
\begin{proposition}
Let $\lambda= \lambda_1^{e_1} \lambda_2^{e_2}\cdots \lambda_r^{e_r}$ and $\mu=\mu_1^{f_1} \mu_2^{f_2}\cdots \mu_s^{f_s}$ be partitions of $n$. Then $\mathcal Z_{S_n}(\lambda)$ is conjugate to $\mathcal Z_{S_n}(\mu)$ if and only if 
\begin{enumerate}
\item $r=s$,  
\item for all $i\geq 2$, $\lambda_i$ and $\mu_i$ are $\geq 3$ and $\lambda_i^{e_i} = \mu_i^{f_i}$,
\item $\lambda_1^{e_1}=1^2$ and $\mu_1^{f_1}=2^1$ or vice versa.
\end{enumerate}  
\end{proposition}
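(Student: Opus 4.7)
The plan is to prove both implications through an orbit-structure analysis of the centralizers on $\{1,\ldots,n\}$, leveraging Lemmas~\ref{lemmar} and~\ref{lemmapart}.

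For the $(\Leftarrow)$ direction, suppose conditions (1)--(3) hold and assume without loss of generality that $\lambda_1^{e_1}=1^2$ and $\mu_1^{f_1}=2^1$. Then $\lambda=1^2\nu$ and $\mu=2^1\nu$ share the common tail $\nu=\lambda_2^{e_2}\cdots\lambda_r^{e_r}$ with all parts $\geq 3$. Unwinding the product description recalled above, both centralizers take the form
$$\mathcal Z_{S_n}(\lambda)=\langle(1,2)\rangle\times\prod_{i\geq 2}C_{\lambda_i}\wr S_{e_i}=\mathcal Z_{S_n}(\mu),$$
because the leading factors $C_1\wr S_2$ (for $\lambda$) and $C_2\wr S_1$ (for $\mu$) both equal $\langle(1,2)\rangle$ inside $S_n$. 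Hence the centralizers are in fact equal.

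For the $(\Rightarrow)$ direction, suppose $g^{-1}\mathcal Z_{S_n}(\lambda)g=\mathcal Z_{S_n}(\mu)$ for some $g\in S_n$. Lemma~\ref{lemmar} immediately yields $r=s$. The element $g$ sends each orbit of $\mathcal Z_{S_n}(\lambda)$ bijectively onto an orbit of $\mathcal Z_{S_n}(\mu)$ of equal size, intertwining the transitive action $C_{\lambda_i}\wr S_{e_i}$ on $\lambda_ie_i$ points with a corresponding $C_{\mu_j}\wr S_{f_j}$ on $\mu_jf_j$ points. A short verification (for instance, the centre of $C_a\wr S_b$ has order $a$ whenever $ab\geq 3$, so $a$ is an isomorphism invariant, which then determines $b$) shows that the pair $(\lambda_i,e_i)$ is uniquely recovered from this orbit action as soon as $\lambda_ie_i\geq 3$. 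The sole exception occurs at orbit size $2$, where both $(1,2)$ and $(2,1)$ produce the same permutation group $\langle(1,2)\rangle$ on a two-element set.

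Consequently the multisets $\{(\lambda_i,e_i)\}$ and $\{(\mu_j,f_j)\}$ agree up to possible exchange of $(1,2)\leftrightarrow(2,1)$ blocks. Because $\lambda$ and $\mu$ have strictly increasing distinct parts, each can contain at most one $1^2$-block and at most one $2^1$-block. A short case analysis concludes the argument: if $\lambda,\mu$ contain the same configuration of $\{1^2,2^1\}$-blocks then the matching is forced trivially and $\lambda=\mu$; otherwise $\lambda$ contains exactly one of $\{1^2,2^1\}$ while $\mu$ contains the other, and the absence of the complementary block in each of $\lambda,\mu$ forbids any further part from equalling $1$ or $2$, so every remaining part lies in $\{3,4,\ldots\}$ and must match on both sides, yielding conditions (2) and (3). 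The main obstacle is the recovery of $(a,b)$ from the abstract wreath product $C_a\wr S_b$ when $ab\geq 3$; once this is settled, everything else is bookkeeping.
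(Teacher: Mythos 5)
Your proof is correct, and your forward direction coincides with the paper's (both reduce to choosing representatives with $\sigma_{\mu}=(1,2)\sigma_{\lambda}$ so that the two centralizers are literally equal). For the converse you take a genuinely different route at the key recovery step. The paper, after invoking Lemma~\ref{lemmar} for $r=s$ exactly as you do, lets the \emph{center} $Z_{\lambda}$ of the whole centralizer act on $\{1,\dots,n\}$: by Lemma~\ref{lemmapart} its orbits have size $\lambda_i$, each occurring $e_i$ times, so the partition is read off directly except when $\lambda_1^{e_1}=1^2$. You instead use the orbits of the full centralizer (sizes $\lambda_ie_i$) and recover each pair $(\lambda_i,e_i)$ from the transitive constituent $C_{\lambda_i}\wr S_{e_i}$, via the fact that its center has order $\lambda_i$ once $\lambda_ie_i\geq 3$ --- the observation recorded in Section~\ref{gsg} as $\mathcal Z(S(a,b))\cong C_a$ for $a\geq 2$ or $b\geq 3$. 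Both mechanisms isolate the same unique ambiguity ($1^2$ versus $2^1$ on an orbit of size $2$). What your version buys is a more explicit endgame: the case analysis of which of the blocks $1^2$, $2^1$ each partition can contain, and why no other part can equal $1$ or $2$ in the nontrivial case, spells out details the paper leaves implicit (its proof only treats explicitly the case where neither leading block is $1^2$). What it costs is having to compute the center of $C_a\wr S_b$ and to note that conjugation induces a permutation isomorphism between matched transitive constituents, whereas the paper only needs the explicit generators of $Z_{\lambda}$ displayed at the start of Section~\ref{z-class-Sn}.
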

\begin{proof}
Clearly if the three conditions are given we have  $\lambda= 1^2\nu$ and $\mu=2^1\nu$ where $\nu=\nu_1^{l_1} \cdots \nu_k^{l_k}$ is a partition of $n-2$ with $\nu_1 > 2$. Thus the representative elements of the conjugacy classes are $\sigma_{\mu}=(12)\sigma_{\lambda}$ and $\sigma_{\lambda}$, where $\sigma_{\lambda}$ has cycles each of length $>2$. Thus centralizers of these two elements are same.

For the converse, we choose representative elements $\sigma_{\lambda}$ and $\sigma_{\mu}$ and we are given that $\mathcal Z_{S_n}(\sigma_{\lambda})$ and $\mathcal Z_{S_n}(\sigma_{\mu})$ are conjugate. The Lemma~\ref{lemmar} implies that $r=s$. Now we take the center of both of these groups $Z_{\lambda}$ and $Z_{\mu}$ and make it act on the set $\{1,2,\ldots,n\}$. If $\lambda_1^{e_1}$ and $\mu_1^{f_1}$ both are not $1^2$ then from Lemma~\ref{lemmapart} we get the required result.
\end{proof}
\noindent This proves Theorem~\ref{maintheoremsn}.

\section{Rational conjugacy classes in $A_n$}

The group $A_n$ is of index $2$ in $S_n$. Thus, we usually think of conjugacy classes in $A_n$ in terms of that of $S_n$. We have two kinds of conjugacy classes in $A_n$. Let $\sigma_{\lambda}$ be a representative of a conjugacy class, corresponding to a partition $\lambda$, of $S_n$. Suppose $\sigma_{\lambda}\in A_n$, that is to say, $\lambda$ is an even partition.  Then the two kinds of conjugacy classes are, 
\begin{enumerate}
\item[a.] {\bf Split:} The conjugacy class of $\sigma_{\lambda}$ in $S_n$ splits into two conjugacy classes in $A_n$ if and only if all parts of $\lambda$ are distinct and odd, which happens, if and only if $\mathcal Z_{S_n}(\sigma_{\lambda})=\mathcal Z_{A_n}(\sigma_{\lambda})$. 
\item[b.] {\bf Non-split:} The conjugacy class of $\sigma_{\lambda}$ remains a single conjugacy class in $A_n$ if and only if either one of the $e_i\geq 2$ for some $i$ or at least one of the $\lambda_i$ is even, which is, if and only if $\mathcal Z_{A_n}(\sigma_{\lambda})\subsetneq \mathcal Z_{S_n}(\sigma_{\lambda})$.
\end{enumerate}
While writing proofs in this section and later sections, we consider these two cases separately.

Let $G$ be a finite group and $g\in G$. The Weyl group of an element $g$ in $G$, denoted as, $W_{G}(g):=N_G(\langle g \rangle )/\mathcal Z_G(\langle g \rangle)$ where $\langle g \rangle$ is the subgroup generated by $g$. Using the map $\iota \colon N_G(\langle g \rangle) \rightarrow \Aut(\langle g \rangle)$ given by $\iota(x)(g^r)=xg^rx^{-1}$, one can show that, the element $g$ in $G$ is rational if and only if $W_G(g)\cong \Aut(\langle g \rangle)$. We need to understand Weyl group of elements $\sigma$ in $A_n$. Since $S_n$ is a rational group, we have, the Weyl group $W_{S_n}(\sigma) \cong \Aut(\langle \sigma \rangle)$. Thus, to understand if $\sigma$ is rational in $A_n$, we need to understand $N_{A_n}(\langle \sigma \rangle)$. This is determined by Brison (see Theorem 4.3~\cite{br}) as follows,
\begin{theorem}\label{brison}
Let $\sigma \in A_n$ and corresponding partition be $\lambda=\lambda_1^{e_1}\cdots \lambda_r^{e_r}$. Then, $N_{S_n}(\langle \sigma \rangle)= N_{A_n}(\langle \sigma \rangle)$ if and only if $\lambda$ satisfies the following,
\begin{enumerate}
\item all parts of $\lambda$ are distinct, i.e., $e_i=1$ for all $i$, 
\item $\lambda_i$ is odd for all $i$, and 
\item the product of parts $\displaystyle\prod_{i=1}^r \lambda_i \in \mathbb Z$ is a perfect square.
\end{enumerate}
\end{theorem}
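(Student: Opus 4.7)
The plan is to analyse $N_{S_n}(\langle \sigma \rangle)$ via the short exact sequence
$$1 \longrightarrow \mathcal Z_{S_n}(\sigma) \longrightarrow N_{S_n}(\langle \sigma \rangle) \xrightarrow{\iota} \Aut(\langle \sigma \rangle) \longrightarrow 1,$$
which is exact on the right precisely because $S_n$ is a rational group. The desired equality $N_{S_n}(\langle \sigma \rangle)=N_{A_n}(\langle \sigma \rangle)$ is just the inclusion $N_{S_n}(\langle \sigma \rangle)\subseteq A_n$, and I would split this into two separate requirements: (i) $\mathcal Z_{S_n}(\sigma)\subseteq A_n$, and (ii) for every $m$ coprime to $o(\sigma)$, one (equivalently every) $\tau\in S_n$ with $\tau\sigma\tau^{-1}=\sigma^m$ is already in $A_n$.

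For (i), use the identification $\mathcal Z_{S_n}(\sigma)\cong \prod_i C_{\lambda_i}\wr S_{e_i}$ recalled in Section~\ref{z-class-Sn}. A generating $\lambda_i$-cycle of $C_{\lambda_i}$ has sign $(-1)^{\lambda_i-1}$, while a transposition in the $S_{e_i}$-factor swaps two $\lambda_i$-cycles in parallel and so is a product of $\lambda_i$ transpositions, with sign $(-1)^{\lambda_i}$. Requiring both families of generators to be even forces $\lambda_i$ odd for all $i$ and $\lambda_i$ even whenever $e_i\geq 2$; the only way both can hold simultaneously is $e_i=1$ for every $i$, giving conditions (1) and (2).

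Under (1) and (2) I address (ii). Since the $\lambda_i$ are pairwise distinct, any $\tau$ with $\tau\sigma\tau^{-1}=\sigma^m$ must preserve each support of $\sigma_{\lambda_i}$ setwise, so $\tau=\tau_1\cdots\tau_r$ with $\tau_i$ acting on that support. Identifying the support with $\mathbb Z/\lambda_i$ through the cyclic action, the relation $\tau_i\sigma_{\lambda_i}\tau_i^{-1}=\sigma_{\lambda_i}^m$ forces $\tau_i(k)\equiv a+mk\pmod{\lambda_i}$; the additive shift by $a$ is a power of the odd cycle $\sigma_{\lambda_i}$ and hence even, so the sign of $\tau_i$ equals the sign of the multiplication-by-$m$ permutation of $\mathbb Z/\lambda_i$. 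By Zolotarev's lemma this sign equals the Jacobi symbol $\left(\frac{m}{\lambda_i}\right)$, so by multiplicativity in the lower argument,
$$\mathrm{sgn}(\tau)=\prod_{i=1}^{r}\left(\frac{m}{\lambda_i}\right)=\left(\frac{m}{\prod_{i=1}^{r}\lambda_i}\right).$$
This equals $+1$ for every $m$ coprime to $N=\prod_i\lambda_i$ precisely when the Jacobi-symbol character on $(\mathbb Z/N)^*$ is trivial, and the standard fact that this character is trivial iff each prime of $N$ occurs to an even power gives exactly condition (3).

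The main obstacle I anticipate is the sign computation in step (ii): whereas (i) reduces to inspecting a handful of generators of the centraliser, obtaining the sign of a Weyl-coset representative requires either invoking Zolotarev's lemma or computing the cycle structure of multiplication by $m$ on $\mathbb Z/\lambda_i$ by hand (using the orbit decomposition governed by the multiplicative order of $m$ modulo each prime-power divisor of $\lambda_i$). Once the Jacobi symbol has been produced, the equivalence of (3) with the triviality of $\bigl(\tfrac{\cdot}{N}\bigr)$ on $(\mathbb Z/N)^*$ is a routine character-theoretic observation.
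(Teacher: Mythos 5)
Your argument is correct, but note that the paper does not prove this statement at all: it is imported verbatim from Brison (Theorem 4.3 of~\cite{br}), so you have supplied a self-contained proof where the paper offers only a citation. Your route is a natural one: the necessity of (1) and (2) via the signs of the generators of $\mathcal Z_{S_n}(\sigma)\cong\prod_i C_{\lambda_i}\wr S_{e_i}$ is exactly the standard splitting criterion already quoted in the paper, and the reduction of the remaining question to the sign of the multiplication-by-$m$ map on each $\mathbb Z/\lambda_i$, evaluated by the Zolotarev--Frobenius lemma as the Jacobi symbol $\left(\frac{m}{\lambda_i}\right)$, correctly yields $\mathrm{sgn}(\tau)=\left(\frac{m}{\prod_i\lambda_i}\right)$ and hence condition (3) via triviality of this quadratic character. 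Two small points are worth making explicit rather than leaving implicit: first, the relevant $m$ range over units modulo $o(\sigma)=\mathrm{lcm}(\lambda_1,\dots,\lambda_r)$ rather than modulo $N=\prod_i\lambda_i$, but since these two integers have the same prime divisors and the reduction $(\mathbb Z/N)^*\to(\mathbb Z/o(\sigma))^*$ is surjective, the character $\left(\frac{\cdot}{N}\right)$ is trivial on the one set of units if and only if it is trivial on the other, so your conclusion stands; second, in step (ii) the phrase ``one (equivalently every)'' relies on $\mathcal Z_{S_n}(\sigma)\subseteq A_n$, which you have at that point by (1) and (2), so the sign of a coset representative is well defined. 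With those remarks your proof is complete and, arguably, more transparent than chasing down the reference.
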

\begin{corollary}\label{corollary_perfect_square}
Suppose $n$ is odd and $w=(1,2,\ldots,n)$ is in $A_n$. Then, $w$ is rational in $A_n$ if and only if $n$ is a perfect square (of odd number). 
\end{corollary}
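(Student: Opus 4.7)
The plan is to deduce this corollary directly from Brison's Theorem~\ref{brison} combined with the Weyl-group criterion for rationality that was recalled just before it. First I would note that the single $n$-cycle $w=(1,2,\ldots,n)$ corresponds to the partition $\lambda=n^1$ of $n$, so all three hypotheses of Theorem~\ref{brison} collapse to a single condition: the parts are automatically distinct; the one part $n$ is odd by assumption; and the product of parts is simply $n$ itself. Hence Brison's theorem specializes to the statement that $N_{A_n}(\langle w\rangle)=N_{S_n}(\langle w\rangle)$ if and only if $n$ is a perfect square.

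Next I would pin down the centralizers. Since $w$ is an $n$-cycle, $\mathcal Z_{S_n}(w)=\langle w\rangle$, and because $n$ is odd every power of $w$ is a product of odd-length cycles, so $\langle w\rangle\subseteq A_n$; consequently $\mathcal Z_{A_n}(\langle w\rangle)=\mathcal Z_{S_n}(\langle w\rangle)=\langle w\rangle$. This is the key small observation that keeps the Weyl-group quotient on both sides controlled by the same denominator.

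The heart of the argument is then the Weyl-group characterization of rationality: an element $g$ in $G$ is rational if and only if the natural injection $W_G(g)\hookrightarrow \Aut(\langle g\rangle)$ is an isomorphism. Since $S_n$ is a rational group, $W_{S_n}(w)\cong \Aut(\langle w\rangle)$. Because the centralizers in $S_n$ and in $A_n$ agree, the inclusion $N_{A_n}(\langle w\rangle)\subseteq N_{S_n}(\langle w\rangle)$ descends to an inclusion $W_{A_n}(w)\subseteq W_{S_n}(w)$ with equality if and only if the normalizers coincide. Combining this with the previous paragraph, $w$ is rational in $A_n$ iff $W_{A_n}(w)=W_{S_n}(w)$ iff $N_{A_n}(\langle w\rangle)=N_{S_n}(\langle w\rangle)$ iff $n$ is a perfect square, which is exactly the claim.

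I do not anticipate a serious obstacle here; essentially the only point requiring a moment's care is verifying that $\langle w\rangle$ sits inside $A_n$ (so that the $S_n$- and $A_n$-centralizers really do coincide), because otherwise one could not compare the two Weyl groups by simply comparing normalizers. Once that is in hand, the corollary is a one-line specialization of Theorem~\ref{brison}.
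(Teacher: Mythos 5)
Your proposal is correct and follows essentially the same route as the paper: both reduce the question via the Weyl-group criterion and the rationality of $S_n$ to whether $N_{A_n}(\langle w\rangle)=N_{S_n}(\langle w\rangle)$, using that $\mathcal Z_{A_n}(w)=\mathcal Z_{S_n}(w)$ because $n$ is odd, and then invoke Theorem~\ref{brison} for the one-part partition $n^1$. Your version merely makes explicit the observation that $\mathcal Z_{S_n}(w)=\langle w\rangle\subseteq A_n$, which the paper phrases in terms of the conjugacy class splitting.
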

\begin{proof}
We know $w$ is rational in $S_n$. Thus $W_{S_n}(w)\cong \Aut(\langle w \rangle)$. Since $n$ is odd the conjugacy class of $w$ in $S_n$ splits in $A_n$ and $\mathcal Z_{A_n}(w)=\mathcal Z_{S_n}(w)$. Thus $w$ is rational in $A_n$ if and only if $N_{S_n}(\langle w \rangle)=N_{A_n}(\langle w \rangle)$ which is if and only if $n$ is a perfect square (from Theorem~\ref{brison} above). 
\end{proof}
\noindent Now we determine which conjugacy classes are rational in $A_n$.

When the conjugacy class does not split, $C=\sigma_{\lambda}^{S_n}=\sigma_{\lambda}^{A_n}$ and $\mathcal Z_{A_n}(\sigma_{\lambda}) \subsetneq \mathcal Z_{S_n}(\sigma_{\lambda})$ is of index $2$. Then,
\begin{proposition}\label{rationalnsplit}
Let $C$ be a non-split conjugacy class in $A_n$. Then, $C$ is rational in $A_n$.
\end{proposition}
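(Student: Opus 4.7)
The plan is to use the Weyl-group criterion for rationality stated just before Theorem~\ref{brison}: an element $g$ in a finite group $G$ is rational if and only if $W_G(g) := N_G(\langle g \rangle)/\mathcal Z_G(\langle g \rangle) \cong \Aut(\langle g \rangle)$. Since $S_n$ is rational and $\sigma_\lambda \in A_n \subset S_n$, we automatically have $W_{S_n}(\sigma_\lambda) \cong \Aut(\langle \sigma_\lambda \rangle)$. The goal is therefore to show that passing from $S_n$ to $A_n$ does not shrink the Weyl group, i.e.\ $W_{A_n}(\sigma_\lambda) \cong W_{S_n}(\sigma_\lambda)$.

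First I would show that the natural map $W_{A_n}(\sigma_\lambda) \to W_{S_n}(\sigma_\lambda)$ induced by inclusion is injective. The kernel consists of cosets $x\mathcal Z_{A_n}(\sigma_\lambda)$ with $x \in N_{A_n}(\langle \sigma_\lambda \rangle) \cap \mathcal Z_{S_n}(\sigma_\lambda)$; since $N_{A_n}(\langle \sigma_\lambda \rangle)\subseteq A_n$, this intersection equals $A_n \cap \mathcal Z_{S_n}(\sigma_\lambda) = \mathcal Z_{A_n}(\sigma_\lambda)$, so the kernel is trivial.

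Next I would compare orders. In the non-split case the hypothesis gives $[\mathcal Z_{S_n}(\sigma_\lambda) : \mathcal Z_{A_n}(\sigma_\lambda)] = 2$. For the normalizers I invoke Theorem~\ref{brison}: being in the non-split case means the partition $\lambda$ has either a repeated part or an even part, so at least one of Brison's conditions (1) or (2) fails; therefore $N_{A_n}(\langle \sigma_\lambda \rangle) \subsetneq N_{S_n}(\langle \sigma_\lambda \rangle)$, and since $A_n$ has index $2$ in $S_n$ this inclusion also has index $2$. Taking quotients,
\[
|W_{A_n}(\sigma_\lambda)| = \frac{|N_{A_n}(\langle \sigma_\lambda \rangle)|}{|\mathcal Z_{A_n}(\sigma_\lambda)|} = \frac{|N_{S_n}(\langle \sigma_\lambda \rangle)|/2}{|\mathcal Z_{S_n}(\sigma_\lambda)|/2} = |W_{S_n}(\sigma_\lambda)|.
\]
Combined with the injection above, this gives $W_{A_n}(\sigma_\lambda) \cong W_{S_n}(\sigma_\lambda) \cong \Aut(\langle \sigma_\lambda \rangle)$, so $\sigma_\lambda$ is rational in $A_n$ and hence so is the entire class $C$.

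The only subtle point is checking the correct identification of the kernel and the two index-$2$ statements; both are small observations, but they rely on the precise characterization (from Brison's theorem together with the split/non-split dichotomy) that exactly one of the two conditions $\mathcal Z_{A_n}(\sigma_\lambda) = \mathcal Z_{S_n}(\sigma_\lambda)$ or $N_{A_n}(\langle \sigma_\lambda \rangle) = N_{S_n}(\langle \sigma_\lambda \rangle)$ holds for a given $\sigma_\lambda \in A_n$. I expect this bookkeeping (rather than any deep fact) to be the only real hurdle.
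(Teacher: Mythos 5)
Your proof is correct, but it takes a different route from the paper's. The paper argues directly with conjugating elements: since $S_n$ is rational, there is $g\in S_n$ with $g\sigma_\lambda g^{-1}=\sigma_\lambda^m$; if $g$ is odd, replace it by $gh$ where $h$ is an odd element of $\mathcal Z_{S_n}(\sigma_\lambda)$ (such an $h$ exists precisely because the class is non-split), and $gh\in A_n$ still conjugates $\sigma_\lambda$ to $\sigma_\lambda^m$. That is the whole proof. You instead run the Weyl-group criterion and compare $|W_{A_n}(\sigma_\lambda)|$ with $|W_{S_n}(\sigma_\lambda)|$ via the two index-$2$ statements; your kernel identification and the order count are both right, and the natural injection $W_{A_n}(\sigma_\lambda)\hookrightarrow W_{S_n}(\sigma_\lambda)$ being compatible with the maps into $\Aut(\langle\sigma_\lambda\rangle)$ makes the conclusion legitimate. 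One remark: you do not actually need Theorem~\ref{brison} to see that $N_{A_n}(\langle\sigma_\lambda\rangle)\subsetneq N_{S_n}(\langle\sigma_\lambda\rangle)$ in the non-split case --- since $\mathcal Z_{S_n}(\sigma_\lambda)\subseteq N_{S_n}(\langle\sigma_\lambda\rangle)$ and the non-split hypothesis already puts an odd permutation in the centralizer, the normalizer meets $S_n\setminus A_n$ for free. So your argument is sound but invokes heavier machinery (Brison) where the paper's two-line parity fix suffices; the trade-off is that your bookkeeping makes explicit the relation between the centralizer index and the normalizer index, which is the structural content behind the rest of Section~4.
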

\begin{proof}
For this, we need to prove $\sigma_{\lambda}$ is conjugate to $\sigma_{\lambda}^m$ for all $m$ which is coprime to the order of $\sigma_{\lambda}$. Since $S_n$ is rational we have $g\in S_n$ such that $g\sigma_{\lambda}g^{-1}=\sigma_{\lambda}^m$. If $g$ is in $A_n$ we are done. Else take $h\in \mathcal Z_{S_n}(\sigma_{\lambda})$ which is not in $\mathcal Z_{A_n}(\sigma_{\lambda})$. Now $gh\in A_n$ and $gh\sigma_{\lambda}h^{-1}g^{-1}=\sigma_{\lambda}^m$, and we are done.
\end{proof}
\noindent When the conjugacy class splits, let $C$ be the conjugacy class of $\sigma_{\lambda}$ in $S_n$ where $\lambda = \lambda_1^1\cdots \lambda_r^1$ with all $\lambda_i$ odd (and distinct). Let $C_1$ and $C_2$ be the conjugacy classes in $A_n$, which are obtained by splitting $C$. Then,
\begin{proposition}\label{rationalsplit}
With the notation as above, both $A_n$ conjugacy classes $C_1$ and $C_2$ are rational if and only if $\displaystyle\prod_{i=1}^r \lambda_i$ is a perfect square.  
\end{proposition}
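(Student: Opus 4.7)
The plan is to reduce the claim to Brison's criterion (Theorem~\ref{brison}) by comparing Weyl groups in $S_n$ and $A_n$. Since $\lambda$ has all parts odd and distinct (the split case), we have $\mathcal Z_{A_n}(\sigma_\lambda)=\mathcal Z_{S_n}(\sigma_\lambda)$. Consequently $W_{A_n}(\sigma_\lambda)=N_{A_n}(\langle \sigma_\lambda\rangle)/\mathcal Z_{S_n}(\sigma_\lambda)$ embeds into $W_{S_n}(\sigma_\lambda)=N_{S_n}(\langle \sigma_\lambda\rangle)/\mathcal Z_{S_n}(\sigma_\lambda)$ as a subgroup of index equal to $[N_{S_n}(\langle \sigma_\lambda\rangle):N_{A_n}(\langle \sigma_\lambda\rangle)]$, which divides $[S_n:A_n]=2$ and hence lies in $\{1,2\}$.

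Because $S_n$ is a rational group, $W_{S_n}(\sigma_\lambda)\cong \Aut(\langle \sigma_\lambda\rangle)$. Hence the criterion recalled before Theorem~\ref{brison} (namely, $\sigma_\lambda$ is rational in $A_n$ iff $W_{A_n}(\sigma_\lambda)\cong \Aut(\langle \sigma_\lambda\rangle)$) becomes the equality $W_{A_n}(\sigma_\lambda)=W_{S_n}(\sigma_\lambda)$, equivalently $N_{A_n}(\langle \sigma_\lambda\rangle)=N_{S_n}(\langle \sigma_\lambda\rangle)$. Applying Theorem~\ref{brison}, this holds iff the parts of $\lambda$ are distinct (condition (1)), all odd (condition (2)), and $\prod_{i=1}^r \lambda_i$ is a perfect square (condition (3)). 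Since we are in the split case, (1) and (2) are automatic, so the condition reduces precisely to $\prod_{i=1}^r\lambda_i$ being a perfect square.

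It remains to verify that the two split classes $C_1,C_2$ are simultaneously rational (or non-rational); this justifies phrasing the conclusion in terms of the partition $\lambda$ rather than the individual class. Pick any $g\in S_n\setminus A_n$ that exchanges $C_1$ and $C_2$. Since $A_n\triangleleft S_n$, conjugation by $g$ is an automorphism of $A_n$ carrying $C_1$ onto $C_2$, and being a rational conjugacy class is invariant under automorphisms of the ambient group. Concretely, if $h\in A_n$ realizes $h\sigma_\lambda h^{-1}=\sigma_\lambda^m$ for some $m$ coprime to $o(\sigma_\lambda)$, then $ghg^{-1}\in A_n$ conjugates $g\sigma_\lambda g^{-1}\in C_2$ to $(g\sigma_\lambda g^{-1})^m$, so $C_2$ inherits rationality from $C_1$ (and the converse is symmetric).

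The only genuinely non-formal step is the index computation in the first paragraph, where one must notice that replacing $N_{S_n}(\langle\sigma_\lambda\rangle)$ by its intersection with $A_n$ either changes nothing or cuts the Weyl group down by an index-$2$ subgroup, which in the latter case makes $\Aut(\langle\sigma_\lambda\rangle)$ unattainable; once this dichotomy is set up, Brison's theorem and the normality of $A_n$ close the argument.
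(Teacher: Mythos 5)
Your argument is correct and is essentially the paper's own proof: since the class splits, $\mathcal Z_{A_n}(\sigma_\lambda)=\mathcal Z_{S_n}(\sigma_\lambda)$, so rationality in $A_n$ reduces to $N_{A_n}(\langle\sigma_\lambda\rangle)=N_{S_n}(\langle\sigma_\lambda\rangle)$, which Theorem~\ref{brison} characterizes by the perfect-square condition. The only addition is your explicit verification (via conjugation by an odd permutation and normality of $A_n$) that $C_1$ and $C_2$ are simultaneously rational, a fact the paper states as a remark without proof.
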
 
\begin{proof}
In this case, we have $\mathcal Z_{A_n}(\sigma_{\lambda})= \mathcal Z_{S_n}(\sigma_{\lambda})$. Thus $W_{A_n}(\sigma_{\lambda})=W_{S_n}(\sigma_{\lambda})\cong \Aut(\langle \sigma_{\lambda}\rangle)$ if and only if $N_{A_n}(\langle \sigma_{\lambda}\rangle )= N_{S_n}(\langle \sigma_{\lambda}\rangle)$. Which is determined by Theorem~\ref{brison}.
\end{proof}
\noindent We remark that either both conjugacy classes $C_1$ and $C_2$ are rational or not rational simultaneously. 
\begin{proposition}\label{rationalclass}
With the notation as above, suppose both conjugacy classes $C_1$ and $C_2$ are not rational. Then the subset $C=C_1\cup C_2$ is a rational class in $A_n$. 
\end{proposition}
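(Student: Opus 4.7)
The plan is to show the two inclusions that the rational class $R$ of $\sigma_\lambda$ in $A_n$ equals $C = C_1 \cup C_2$. Recall that by definition
\[
R = \bigcup_{\substack{(m,\,o(\sigma_\lambda))=1}} (\sigma_\lambda^m)^{A_n},
\]
and that $S_n$ is a rational group, so for every $m$ coprime to $o(\sigma_\lambda)$ the power $\sigma_\lambda^m$ is $S_n$-conjugate to $\sigma_\lambda$, i.e., $\sigma_\lambda^m\in C$.

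First I would establish $R\subseteq C$. Any element of $R$ is $A_n$-conjugate, hence $S_n$-conjugate, to some $\sigma_\lambda^m$ with $(m,o(\sigma_\lambda))=1$. By the rationality of $S_n$, this element already lies in $C$, and since $C$ is a union of $A_n$-classes ($C=C_1\cup C_2$), any $A_n$-conjugate of $\sigma_\lambda^m$ still lies in $C$. This gives $R\subseteq C_1\cup C_2$.

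Next I would establish the reverse inclusion $C_1\cup C_2\subseteq R$. The inclusion $C_1\subseteq R$ is immediate, taking $m=1$, since $C_1$ is the $A_n$-class of $\sigma_\lambda$. For $C_2$, the hypothesis that $C_1$ is \emph{not} rational in $A_n$ means that some coprime power $\sigma_\lambda^{m_0}$ fails to be $A_n$-conjugate to $\sigma_\lambda$, i.e., $\sigma_\lambda^{m_0}\notin C_1$. But by the first paragraph $\sigma_\lambda^{m_0}\in C_1\cup C_2$, so $\sigma_\lambda^{m_0}\in C_2$. Hence the $A_n$-class of $\sigma_\lambda^{m_0}$ is exactly $C_2$, and $C_2\subseteq R$.

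Combining the two inclusions gives $R = C_1\cup C_2 = C$, which is exactly the statement that $C$ is a single rational class of $A_n$. The only subtle point — and the one I would be most careful about — is the existence of the coprime exponent $m_0$ with $\sigma_\lambda^{m_0}\in C_2$; this is not a computation but a logical unpacking of ``$C_1$ is not a rational conjugacy class'' together with the $S_n$-rationality of $\sigma_\lambda$, and the fact that $C$ splits into exactly the two $A_n$-classes $C_1,C_2$ (so ``not in $C_1$'' forces ``in $C_2$''). No further hypothesis on $\lambda$ is needed beyond the split setting already assumed.
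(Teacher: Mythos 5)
Your proof is correct and follows the same route as the paper, which simply remarks that the claim ``follows easily because $C$ is a rational conjugacy class in $S_n$''; your two inclusions $R\subseteq C$ and $C\subseteq R$ are precisely the unpacking of that remark, with the key point (non-rationality of $C_1$ forcing some coprime power $\sigma_\lambda^{m_0}$ into $C_2$) handled correctly.
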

\begin{proof}
This follows easily because $C$ is a rational conjugacy class in $S_n$.
\end{proof}
\begin{proof}[{\bf Proof of Theorem~\ref{maintheorem3}}]
Let $\tilde C$ be a conjugacy class in $A_n$. Consider the conjugacy class $C$ in $S_n$ containing $\tilde C$. Let $\lambda=\lambda_1^{e_1}\cdots \lambda_r^{e_r}$ be the corresponding partition of $C$. Then either $\tilde C=C$ or $C=C_1 \cup C_2$, where $\tilde C$ is one of the $C_1$ or $C_2$. If $\tilde C=C$ it follows from Proposition~\ref{rationalnsplit} that it is always rational and this corresponds to the partitions where either $e_i\geq 2$ for some $i$ or one of the $\lambda_i$ is even.

Now suppose $C=C_1 \cup C_2$, where $\tilde C$ is one of the components. Then from Proposition~\ref{rationalsplit}, it follows that both $C_1$ and $C_2$, and hence $\tilde C$, are rational if and only if $\prod_{i=1}^r \lambda_i$ is a square. That is, in this case the partition $\lambda$ has all parts distinct, odd and the product of parts is a square.

When $\tilde C$ is not a rational conjugacy class, Proposition~\ref{rationalclass} implies $C$ is a rational class in $A_n$. This completes the proof.
\end{proof}

\section{$z$-classes in $A_n$ - when the conjugacy class splits}
Since the $z$-equivalence is a relation on conjugacy classes we deal with split and non-split classes separately. We begin with a few Lemmas.
\begin{lemma}\label{Centralizers_of_split_conjugacy_classes_in_A_n}
Let $x, y$ be elements in $A_n$ such that $x$ and $y$ are conjugate in $S_n$. If there exists $g\in A_n$ such that $g$ is conjugate to $y$ in $A_n$ and $\mathcal Z_{A_n}(g)=\mathcal Z_{A_n}(x)$ then centralizers $\mathcal Z_{A_n}(x)$ and  $\mathcal Z_{A_n}(y)$ are conjugate in $A_n$.
\end{lemma}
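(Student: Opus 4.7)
The plan is to reduce the conjugacy of the two centralizers in $A_n$ to the transportation given by the hypothesis that $g$ is $A_n$-conjugate to $y$. The key tool is the elementary fact that conjugation by any $h\in G$ sends $\mathcal Z_G(u)$ to $\mathcal Z_G(huh^{-1})$, applied with $G=A_n$.

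First, I would invoke the hypothesis that $g$ and $y$ are conjugate in $A_n$ to pick an explicit $h\in A_n$ with $hgh^{-1}=y$. Second, I would apply the general identity $h\,\mathcal Z_{A_n}(g)\,h^{-1}=\mathcal Z_{A_n}(hgh^{-1})$ to get $h\,\mathcal Z_{A_n}(g)\,h^{-1}=\mathcal Z_{A_n}(y)$. Third, I would substitute the given equality $\mathcal Z_{A_n}(g)=\mathcal Z_{A_n}(x)$ to conclude $h\,\mathcal Z_{A_n}(x)\,h^{-1}=\mathcal Z_{A_n}(y)$, which is exactly the required $A_n$-conjugacy of the two centralizers, witnessed by $h\in A_n$.

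There is no real obstacle here: once the conjugator $h$ is produced from the $A_n$-conjugacy hypothesis, the rest is a direct substitution. The hypothesis that $x$ and $y$ are conjugate in $S_n$ does not enter the deduction; it is present because the lemma is intended to be used in the split case, where a single $S_n$-class decomposes into two $A_n$-classes and one wants to translate between them. The point of the lemma is precisely that, even though $x$ and $y$ may lie in distinct $A_n$-classes, one can still conjugate their centralizers in $A_n$ provided one can locate a third element $g$ in the $A_n$-class of $y$ whose centralizer coincides with that of $x$; this reframing will be the useful form for subsequent applications in the split-class analysis.
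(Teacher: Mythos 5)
Your proof is correct and follows essentially the same route as the paper's: conjugacy of $g$ and $y$ in $A_n$ gives conjugacy of their centralizers, and the hypothesis $\mathcal Z_{A_n}(g)=\mathcal Z_{A_n}(x)$ then transfers this to $\mathcal Z_{A_n}(x)$ and $\mathcal Z_{A_n}(y)$. You merely make the conjugating element $h$ explicit, which the paper leaves implicit.
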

\begin{proof}
Since $g$ and $y$ are conjugate in $A_n$, their centralizers $\mathcal Z_{A_n}(g)$ and $\mathcal Z_{A_n}(y)$ are conjugate in $A_n$. Hence centralizers $\mathcal Z_{A_n}(x)$ and  $\mathcal Z_{A_n}(y)$ are conjugate in $A_n$.
\end{proof}
\noindent The following Lemma establishes partial converse to the above.
\begin{lemma}\label{converse}
Let $\lambda=\lambda_1\cdots\lambda_r$ be a partition with all parts (distinct and) odd. Let $x$ and $y$ be elements in $A_n$ representing the two distinct conjugacy classes corresponding to $\lambda$. Suppose $x=x_1x_2\cdots x_r$ and $y=y_1y_2\cdots y_r \in A_n$, where $x_i$ and $y_i$ are cycles of length $\lambda_i$ and centralizers $\mathcal Z_{A_n}(x)$ and $\mathcal Z_{A_n}(y)$ are conjugate in $A_n$. Then, $y$ is conjugate to $x_1^{i_1}x_2^{i_2}\cdots x_r^{i_r}$ in $A_n$ for some positive integers $i_1,\ldots, i_r$, where $i_j$ is coprime to $\lambda_j$ (which is the order of $x_j$) for all $j$.
\end{lemma}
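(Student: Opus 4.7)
The plan is to exploit the hypothesis that $\mathcal Z_{A_n}(x)$ and $\mathcal Z_{A_n}(y)$ are conjugate in $A_n$ by extracting a conjugating element $g\in A_n$ and tracking how it acts on the cyclic factors of the centralizers. Since all parts of $\lambda$ are distinct and odd, each block $C_{\lambda_i}\wr S_{e_i}$ of the $S_n$-centralizer degenerates to $C_{\lambda_i}$, so $\mathcal Z_{S_n}(x)=\prod_{i=1}^r\langle x_i\rangle$ is abelian; moreover, since the class splits, $\mathcal Z_{A_n}(x)=\mathcal Z_{S_n}(x)$, and similarly for $y$. The hypothesis therefore amounts to the existence of $g\in A_n$ with $g\bigl(\prod_i\langle x_i\rangle\bigr)g^{-1}=\prod_i\langle y_i\rangle$.

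Next I would analyse the action of these two abelian subgroups on $\{1,2,\ldots,n\}$. The orbits of $\prod_j\langle x_j\rangle$ are precisely the supports $\mathrm{supp}(x_i)$, of size $\lambda_i$, and likewise for $y$. The element $g$ must permute these orbit decompositions, and since the $\lambda_i$ are pairwise distinct it is forced to send $\mathrm{supp}(x_i)$ to $\mathrm{supp}(y_i)$ for every $i$. Consequently every element of $g\langle x_i\rangle g^{-1}$ is supported inside $\mathrm{supp}(y_i)$. But an element $\prod_j y_j^{b_j}$ of $\prod_j\langle y_j\rangle$ is supported in $\mathrm{supp}(y_i)$ only when $b_j\equiv 0\pmod{\lambda_j}$ for every $j\neq i$, so $g\langle x_i\rangle g^{-1}\subseteq\langle y_i\rangle$, and equality follows by comparing orders. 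In particular $g x_i g^{-1}=y_i^{a_i}$ for some $a_i$ coprime to $\lambda_i$.

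To finish, I would invert the exponents: choose positive integers $i_1,\ldots,i_r$ with $a_j i_j\equiv 1\pmod{\lambda_j}$ (automatically coprime to $\lambda_j$). Then
\[ g\,(x_1^{i_1}x_2^{i_2}\cdots x_r^{i_r})\,g^{-1}=y_1^{a_1i_1}y_2^{a_2i_2}\cdots y_r^{a_ri_r}=y_1y_2\cdots y_r=y, \]
so $y$ is $A_n$-conjugate to $x_1^{i_1}\cdots x_r^{i_r}$, as desired. The main subtlety is the containment $g\langle x_i\rangle g^{-1}\subseteq\langle y_i\rangle$ at the level of subgroups rather than mere agreement of supports; this is where the distinctness of the $\lambda_i$ is used twice, first to pin down the orbit correspondence and then to constrain which factor of $\prod_j\langle y_j\rangle$ an element with a prescribed support can lie in. Everything else is routine bookkeeping with cycle structure, and it is essential that $g$ was chosen in $A_n$ rather than merely in $S_n$ so that the final conjugation witnesses an $A_n$-conjugacy.
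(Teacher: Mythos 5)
Your proof is correct, but it reaches the conclusion by a different mechanism than the paper. The paper's argument is a one-liner: writing $\mathcal Z_{A_n}(x)=g\mathcal Z_{A_n}(y)g^{-1}=\mathcal Z_{A_n}(gyg^{-1})$ and using the fact that an element lies in its own centralizer, it places $gyg^{-1}$ directly inside $\mathcal Z_{A_n}(x)=\langle x_1,\ldots,x_r\rangle$, so $gyg^{-1}=x_1^{i_1}\cdots x_r^{i_r}$ immediately; the coprimality of $i_j$ with $\lambda_j$ is left implicit (it follows by comparing cycle types, since the parts are distinct). You instead analyse the orbit structure of the two abelian centralizers on $\{1,\ldots,n\}$, use distinctness of the $\lambda_i$ to force $g$ to match supports, conclude $g\langle x_i\rangle g^{-1}=\langle y_i\rangle$ factor by factor, and then invert the exponents $a_i$. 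This is longer but buys two things: it yields the finer statement $gx_ig^{-1}=y_i^{a_i}$, and it makes the coprimality assertion in the lemma an explicit consequence rather than an afterthought. One small point of hygiene: when some $\lambda_i=1$ the corresponding ``orbit'' is a fixed point and $x_i$ is the identity, so the support bookkeeping degenerates there, but the conclusion $i_1=1$ is trivially coprime to $1$ and nothing breaks. Both arguments rely on the same structural inputs (the class splits, so $\mathcal Z_{A_n}(x)=\mathcal Z_{S_n}(x)$ is the abelian product of the cyclic groups generated by the cycles), so the difference is one of route rather than of substance.
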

\begin{proof}
Since $x$ and $y$ are $z$-conjugate in $A_n$, i.e., there exist $g\in A_n$ such that $\mathcal Z_{A_n}(x)=g\mathcal Z_{A_n}(y)g^{-1}=\mathcal Z_{A_n}(gyg^{-1})$. Now, we know that $\mathcal Z_{A_n}(x)= \mathcal Z_{S_n}(x)= \langle x_1, x_2, \ldots, x_r \rangle \cong C_{\lambda_1}\times\cdots \times C_{\lambda_r}$. Hence $gyg^{-1} = x_1^{i_1}x_2^{i_2}\cdots x_r^{i_r}$ for some $i_1,\ldots, i_r$. Therefore, $y$ is conjugate to $x_1^{i_1}x_2^{i_2}\cdots x_r^{i_r}$ in $A_n$.
\end{proof}
\noindent Now we prove the main proposition of this section.
\begin{proposition}~\label{split-z-class}
Let $\lambda=\lambda_1\cdots\lambda_r$ be a partition with all parts (distinct and) odd. Let $x$ and $y$ be elements in $A_n$ representing the two distinct conjugacy classes corresponding to $\lambda$. Suppose $x=x_1x_2\cdots x_r$ and $y=y_1y_2\cdots y_r$ written as a product of disjoint cycles where $x_i$ and $y_i$ are of length $\lambda_i$. Then, $x$ and $y$ are not $z$-conjugate in $A_n$ if and only if each $\lambda_i$ is a perfect square (of odd number) $\forall i=1,\ldots,r$.
\end{proposition}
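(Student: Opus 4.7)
The plan is to reformulate $z$-conjugacy between the two split classes as the triviality of an explicit sign character on the product of unit groups $\prod_{j=1}^r (\mathbb Z/\lambda_j\mathbb Z)^*$, and then identify its factors with Jacobi-symbol characters via Zolotarev's lemma.

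First, I would upgrade Lemma~\ref{converse} to an equivalence. Its converse also holds: if $y$ is $A_n$-conjugate to $z_{(i)}:=x_1^{i_1}\cdots x_r^{i_r}$ for some $i_j\in (\mathbb Z/\lambda_j\mathbb Z)^*$, then $\langle x_j^{i_j}\rangle = \langle x_j\rangle$ gives $\mathcal Z_{A_n}(z_{(i)})=\langle x_1,\ldots,x_r\rangle=\mathcal Z_{A_n}(x)$, and Lemma~\ref{Centralizers_of_split_conjugacy_classes_in_A_n} produces $z$-conjugacy. So the question becomes: for which tuples $(i_1,\ldots,i_r)$ does $z_{(i)}$ lie in the $A_n$-class of $y$ (equivalently, not of $x$)?

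Next, to detect this I would introduce a sign character. Since $z_{(i)}$ has the same $S_n$-cycle type as $x$, there is $g\in S_n$ with $gxg^{-1}=z_{(i)}$, determined up to right multiplication by $\mathcal Z_{S_n}(x)=\langle x_1,\ldots,x_r\rangle$; as each $x_j$ is an odd-length cycle (hence an even permutation), this ambiguity lies entirely in $A_n$, so $\mathrm{sgn}(g)$ depends only on the tuple. This defines
$$\epsilon\colon\prod_{j=1}^r (\mathbb Z/\lambda_j\mathbb Z)^*\to\{\pm 1\},\qquad (i_1,\ldots,i_r)\mapsto \mathrm{sgn}(g),$$
and $z_{(i)}$ sits in the $A_n$-class of $y$ iff $\epsilon(i_1,\ldots,i_r)=-1$. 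Thus $x$ and $y$ are \emph{not} $z$-conjugate iff $\epsilon\equiv +1$. Choosing $g=g_1\cdots g_r$ with each $g_j$ supported on $\mathrm{supp}(x_j)$ and satisfying $g_jx_jg_j^{-1}=x_j^{i_j}$ (so $g_k$ commutes with $x_j$ for $k\neq j$ by disjointness of supports), one obtains the factorisation $\epsilon(i_1,\ldots,i_r)=\prod_{j=1}^r \epsilon_j(i_j)$; hence $\epsilon\equiv +1$ iff each character $\epsilon_j\colon(\mathbb Z/\lambda_j\mathbb Z)^*\to\{\pm 1\}$ is trivial.

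Finally, I would compute each $\epsilon_j$ with Zolotarev's lemma. Identify $\mathrm{supp}(x_j)$ with $\mathbb Z/\lambda_j\mathbb Z$ so that $x_j$ is the shift $t\mapsto t+1$; then $g_jx_jg_j^{-1}=x_j^{i_j}$ forces $g_j$ to be an affine map $t\mapsto i_jt+c$. Choosing $c=0$ realises $g_j$ as multiplication by $i_j$, whose sign on $\mathbb Z/\lambda_j\mathbb Z$ is the Jacobi symbol $\bigl(\tfrac{i_j}{\lambda_j}\bigr)$. Writing $\lambda_j=\prod_p p^{a_p}$ (all $p$ odd), the Jacobi character $\bigl(\tfrac{\cdot}{\lambda_j}\bigr)=\prod_p \bigl(\tfrac{\cdot}{p}\bigr)^{a_p}$ is trivial on $(\mathbb Z/\lambda_j\mathbb Z)^*$ iff every exponent $a_p$ is even, i.e., iff $\lambda_j$ is a perfect square. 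Combining this with the previous step yields the proposition. The main obstacle, I expect, is this last step: one must identify the conjugator in suitable coordinates as a multiplication map, invoke Zolotarev for the Jacobi symbol, and then carefully distinguish the resulting condition ``each $\lambda_j$ is a square'' from the superficially similar ``$\prod_j\lambda_j$ is a square'' that appears in Brison's Theorem~\ref{brison}---the difference arising precisely because $\epsilon$ lives on the direct product of unit groups, so each character factor must vanish independently, not merely in aggregate.
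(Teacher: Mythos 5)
Your proposal is correct, and its overall skeleton matches the paper's: both arguments reduce the question, via Lemma~\ref{Centralizers_of_split_conjugacy_classes_in_A_n} and Lemma~\ref{converse}, to deciding which of the elements $x_1^{i_1}\cdots x_r^{i_r}$ (for $i_j$ coprime to $\lambda_j$) lie in the $A_n$-class of $x$ versus that of $y$. Where you differ is in how that question is answered. The paper settles it cycle by cycle using Corollary~\ref{corollary_perfect_square}, i.e.\ the rationality criterion for a single odd cycle in $A_{\lambda_k}$, which is in turn quoted from Brison's Theorem~\ref{brison}; concretely, if some $\lambda_k$ is not a square it extracts an odd conjugator $s$ with $sx_ks^{-1}=x_k^m$ and observes that $sxs^{-1}$ lands in the class of $y$ while sharing the centralizer of $x$, and if every $\lambda_i$ is a square it conjugates $x$ to every power tuple inside $A_n$. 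You instead package the whole obstruction as an explicit sign character $\epsilon=\prod_j\epsilon_j$ on $\prod_j(\mathbb Z/\lambda_j\mathbb Z)^*$ and evaluate each factor by Zolotarev--Frobenius as the Jacobi symbol $\bigl(\tfrac{\cdot}{\lambda_j}\bigr)$, which is trivial precisely when $\lambda_j$ is a square. In effect you inline a self-contained proof of the single-cycle case of Brison's theorem rather than citing it; the details check out (the well-definedness of $\mathrm{sgn}(g)$ because $\mathcal Z_{S_n}(x)\subset A_n$, the support-wise factorisation, the affine form of the conjugator, and the reduction ``$\epsilon\equiv+1$ iff each $\epsilon_j\equiv+1$'' obtained by setting the other coordinates to $1$). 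What your route buys is independence from the cited normalizer computation and a transparent explanation of why the condition here is ``each $\lambda_i$ a square'' rather than the aggregate condition ``$\prod_i\lambda_i$ a square'' of Theorem~\ref{brison}: the character lives on a direct product, so each factor must vanish separately. What the paper's route buys is brevity, since Brison's result is already needed elsewhere for the rational-class analysis.
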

\begin{proof}
First, suppose there exists a $k$ such that $\lambda_k$ is not a perfect square of odd number. We define $A_{\lambda_k}$ and $S_{\lambda_k}$ to be the subgroups of $A_n$ and $S_n$ respectively, on the symbols involved in the cycle $x_k$. 
Corollary~\ref{corollary_perfect_square} implies that the element $x_k$ is not a rational element of $A_{\lambda_k}$. Hence, there exists $m$ with $(m, \lambda_k)=1$ such that $x_k$ is not conjugate to $x_k^m$ in $A_{\lambda_k}$. In any case $x_k$ is conjugate to $x_k^m$ in $S_{\lambda_k}$, say, there exists $s\in S_{\lambda_k}\backslash A_{\lambda_k}$ such that $sx_ks^{-1}=x_k^m$. 
Thus, $sxs^{-1}=sx_1x_2\cdots x_k \cdots x_rs^{-1}=x_1x_2\cdots x_{k-1}x_k^mx_{k+1} \cdots x_r.$
We claim that $x$ is not conjugate to $sxs^{-1}$ in $A_n$. Because any two such elements will differ by an element of $\mathcal Z_{A_n}(x)$ which, in this case, is equal to $\mathcal Z_{S_n}(x)$ thus all such elements would be even. This implies that $x$ and $sxs^{-1}$ are representatives of the two distinct conjugacy classes obtained by splitting that of $x$ hence $sxs^{-1}$ is conjugate to $y$. But $\mathcal Z_{A_n}(x)= \langle x_1, x_2, \ldots, x_r \rangle = \langle x_1, x_2, \ldots, x_{k-1}, x_k^m, x_{k+1},\ldots, x_r \rangle = \mathcal Z_{A_n}(sxs^{-1})$, because of the structure of $sxs^{-1}$. Lemma~\ref{Centralizers_of_split_conjugacy_classes_in_A_n} implies that $\mathcal Z_{A_n}(x)$ and $\mathcal Z_{A_n}(y)$ are conjugate in $A_n$.
 
Now, assume that each $\lambda_i$ is a perfect square (of odd number), for all $i$. 
We define the subgroups $A_{\lambda_i}$ of $A_n$ on the symbols appearing in the cycle $x_i$ for all $i$. Corollary~\ref{corollary_perfect_square} implies that $x_i$ is rational in $A_{\lambda_i}$, hence $x_i$ is conjugate to $x_i^{m_i}$ in $A_{\lambda_i}$ for all $m_i$ with $(m_i, \lambda_i)=1$. Let $(j_1,\ldots, j_r)$ be a tuple where $(j_i,\lambda_i)=1$. Then we can find $s_{j_i}\in A_{\lambda_i}$ such that $s_{j_i}x_i s_{j_i}^{-1}=x_i^{j_i}$. 
Thus $s_{j_1}s_{j_2}\cdots s_{j_r}$ is in $A_n$  and conjugates $x$ to $x_1^{j_1}x_2^{j_2}\cdots x_r^{j_r}$. Hence, $y$ can not be conjugate to $x_1^{j_1}x_2^{j_2}\cdots x_r^{j_r}$ for any tuple $(j_1,\ldots, j_r)$ where $(j_i,\lambda_i)=1$. 
Lemma~\ref{converse}, implies that $x$ and $y$ can not be $z$-conjugate in $A_n$.
\end{proof}

\section{The center of centralizers in $A_n$}\label{center-an}
In Lemma~\ref{lemmapart} we showed that, for the group $S_n$, the center of centralizers $Z_{\lambda}$ determines the partition $\lambda$ uniquely via its action on the set $\{1,2,\ldots, n\}$ except in one case when $\lambda_1^{e_1}=1^2$. For the alternating groups we employ similar strategy. 

Let us begin with the case when the partition $\lambda$ has only one part, say, $\lambda=a^b$. The representative element can be chosen as follows,
$$\sigma_{\lambda}=(1,2,\cdots, a)(a+1,a+2,\cdots, 2a)\cdots ((b-1)a+1, (b-1)a+2,\cdots, ba )$$
which for convenience will be written as $\sigma_{\lambda}=\sigma_{\lambda,1}\sigma_{\lambda,2}\cdots\sigma_{\lambda,b}$ where $\sigma_{\lambda,i}$ are cycles of length $a$. And the centralizer is 
$$\mathcal Z_{S_n}(\sigma_\lambda) = \left(\langle (1,2,\cdots, a) \rangle \times \cdots \times \langle ((b-1)a+1, (b-1)a+2,\cdots, ba) \rangle\right) \rtimes S_b $$
where $S_b$ permutes the various cyclic subgroups. To avoid confusion, we write the elements of $S_b$ using roman numerals. For example, the element $(I,II)$ in $S_b$ would be actually $(1,a+1)(2,a+2)\cdots(a,2a)$ in $\mathcal Z_{S_n}(\sigma_\lambda)$, similarly, the element $(I,II,\cdots,b)$ in $S_b$ would be
$(1,a+1, \cdots, (b-1)a+1) (2,a+2, \cdots, (b-1)a+2)\cdots (a,2a, \cdots, ba)$. 
In general, the cycle $(I,II,\cdots,i)$ in $S_b$ would be  $(1,a+1, \cdots, (i-1)a+1) (2,a+2, \cdots, (i-1)a+2)\cdots (a,2a, \cdots, ia)$ which is a product of $a$ many disjoint cycles, each of length $i$. We can also compute $sgn((I,II\cdots,i))= sgn((1,2,\cdots,i))^{a}$ which will be useful to determine if $(I,II\cdots,i)$ belongs to $A_n$, when needed. 
We begin with,
\begin{lemma}\label{odd-perm}
If $\lambda=a^b$ is a partition of $n$ and $b\geq 2$ then $\mathcal Z_{S_n}(\sigma_{\lambda})$ contains at least one odd permutation.
\end{lemma}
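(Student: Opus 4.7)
The plan is a short case analysis on the parity of $a$, using the two kinds of elements that generate $\mathcal Z_{S_n}(\sigma_\lambda)$: the cycles $\sigma_{\lambda,i}$ themselves, and the permutations of the blocks given by $S_b$ in its embedding into $S_n$.

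If $a$ is even, then $\sigma_{\lambda,1}=(1,2,\ldots,a)$ is already an $a$-cycle, which has sign $(-1)^{a-1}=-1$, so it is an odd permutation lying in $\mathcal Z_{S_n}(\sigma_\lambda)$, and we are done.

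If $a$ is odd, I will use the fact (observed in the paragraph preceding the lemma) that the transposition $(I,II)\in S_b$ corresponds, inside $\mathcal Z_{S_n}(\sigma_\lambda)\subset S_n$, to the permutation $(1,a+1)(2,a+2)\cdots(a,2a)$, a product of $a$ disjoint transpositions. Its sign is $(-1)^a=-1$, so it is odd; since $b\geq 2$, the element $(I,II)$ exists in $S_b$, and we again find an odd permutation in the centralizer. Both cases cover all possibilities, which completes the argument.

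There is essentially no obstacle — the only thing to be careful about is to ensure $b\geq 2$ is actually used (it is needed to make $(I,II)$ a legal element of $S_b$ in the odd-$a$ case), and to quote the sign computation $\mathrm{sgn}((I,II,\ldots,i))=\mathrm{sgn}((1,2,\ldots,i))^a$ from the preceding discussion so that the computation in the second case is justified rather than recomputed from scratch.
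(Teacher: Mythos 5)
Your proof is correct and follows exactly the same two-case argument as the paper: the odd $a$-cycle $(1,2,\ldots,a)$ when $a$ is even, and the block transposition $(I,II)$, a product of $a$ disjoint transpositions of sign $(-1)^a=-1$, when $a$ is odd. No differences worth noting.
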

\begin{proof}
If $a$ is even then the cycle $(1,2,\cdots,a) \in \mathcal Z_{S_n}(\sigma_{\lambda})$ is odd and we are done. Thus we may assume $a$ is odd. From the computation above, $sgn((I,II))= (-1)^{a}=-1$ hence $(I,II)$ is odd.
\end{proof}
\begin{lemma}\label{tau-non-diagonal}
\begin{enumerate}
\item If $\tau=(I,II,\cdots, b) \in \mathcal Z_{S_n}(\sigma_\lambda)$ then $\mathcal Z_{\mathcal Z_{S_n}(\sigma_\lambda)}(\tau) = <\tau, \sigma_{\lambda}>$,
\item If $\tau=(I,II,\cdots, b-1) \in \mathcal Z_{S_n}(\sigma_\lambda)$ then $\mathcal Z_{\mathcal Z_{S_n}(\sigma_\lambda)}(\tau) = <\tau, \sigma_{\lambda, b}, \prod_{i=1}^{b-1} \sigma_{\lambda, i}>$,
\end{enumerate}
\end{lemma}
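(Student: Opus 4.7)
The plan is to carry out the proof entirely inside the wreath-product description $\mathcal{Z}_{S_n}(\sigma_\lambda)\cong C_a^b\rtimes S_b$ from Section~\ref{gsg}, with $C_a^b$ generated by the disjoint cycles $\sigma_{\lambda,1},\ldots,\sigma_{\lambda,b}$ and $S_b$ permuting the corresponding factors. Write a general element as $(y_1,\ldots,y_b;\rho)$ with $y_i\in\langle\sigma_{\lambda,i}\rangle$ and $\rho\in S_b$; in these coordinates $\sigma_\lambda$ is the ``diagonal'' element $(\sigma_{\lambda,1},\ldots,\sigma_{\lambda,b};1)$ and $\tau=(1,\ldots,1;\pi)$, where $\pi$ is the full $b$-cycle in case~(1) and the $(b-1)$-cycle fixing $b$ in case~(2).

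First I would apply the multiplication formula recalled in Section~\ref{gsg} to derive the general centralizer criterion: the element $(y_1,\ldots,y_b;\rho)$ commutes with $(1,\ldots,1;\pi)$ if and only if $\rho\pi=\pi\rho$ in $S_b$ and $y_i=y_{\pi^{-1}(i)}$ for every $i$. The first condition places $\rho$ in the $S_b$-centralizer of $\pi$, while the second forces the tuple $(y_i)$ to be constant on each $\pi$-orbit. Both parts of the lemma are then obtained by reading off these two conditions for the specific cycles $\pi$.

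In case~(1), the $b$-cycle $\pi$ acts transitively on $\{I,\ldots,b\}$, so the tuple condition forces $y_1=\cdots=y_b$, and the centralizer of a $b$-cycle in $S_b$ is $\langle\pi\rangle$. The centralizing elements are therefore exactly those of the form $(y,\ldots,y;\pi^k)$ with $y\in C_a$ and $0\le k<b$, which is precisely the subgroup $\langle\sigma_\lambda,\tau\rangle$. In case~(2), the $\pi$-orbits are $\{I,\ldots,b-1\}$ and $\{b\}$, so $y_1=\cdots=y_{b-1}$ with $y_b$ free; moreover any $\rho\in S_b$ commuting with $\pi$ must fix the unique fixed point $b$ of $\pi$ and restrict to the cyclic centralizer of the $(b-1)$-cycle on $\{I,\ldots,b-1\}$, again yielding $\rho\in\langle\pi\rangle$. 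A typical centralizing element is then of the shape $(y,\ldots,y,z;\pi^k)$, and factors visibly as a product of powers of $\tau$, $\sigma_{\lambda,b}$, and $\prod_{i=1}^{b-1}\sigma_{\lambda,i}$, proving the claimed generating set.

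The main obstacle, such as it is, is little more than bookkeeping: one has to set up the wreath-product coordinates carefully enough that the semidirect-product multiplication simultaneously delivers the orbit-invariance condition on the base and the $S_b$-centralizer condition on the top factor. After that, the argument reduces to the elementary fact that the centralizer of an $m$-cycle in $S_m$ is the cyclic group it generates.
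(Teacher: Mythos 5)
Your proof is correct and follows exactly the route the paper intends: its own ``proof'' is the single sentence that the claim follows from the multiplication rule in $S(a,b)$, and your argument is precisely that computation carried out in full (the commutation criterion $\rho\pi=\pi\rho$ and $y_i=y_{\pi^{-1}(i)}$, then the orbit analysis for the two choices of $\pi$). No discrepancies to report.
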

\begin{proof}
The proof is simple and follows from the multiplication defined on $S(a,b)$ in Section~\ref{gsg}.  
\end{proof}
We need to understand the center of centralizers of elements in $A_n$. Suppose $\lambda= \lambda^{e_1}_1 \lambda^{e_2}_2 \cdots  \lambda^{e_r}_r$ is a partition of even type, i.e., $\sigma_{\lambda} \in A_n$.
Recall the notation, $\sigma_\lambda = \sigma_{\lambda_1} \cdots \sigma_{\lambda_r}$, the centralizer is $\mathcal Z_{S_n}(\sigma_\lambda) \cong \displaystyle\prod_{i=1}^r \mathcal Z_{S_{e_i\lambda_i}}(\sigma_{\lambda_i})$ and its center is denoted as $Z_{\lambda}$.
\begin{lemma}\label{CenterOfCentralizer}
Let $x\in A_n$. Then, $Z_{\lambda}\cap A_n = \mathcal Z(\mathcal Z_{S_n}(x))\cap A_n \subseteq \mathcal Z(\mathcal Z_{A_n}(x))$.
\end{lemma}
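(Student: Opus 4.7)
The equality $Z_\lambda \cap A_n = \mathcal Z(\mathcal Z_{S_n}(x)) \cap A_n$ is immediate from the definition $Z_\lambda = \mathcal Z(\mathcal Z_{S_n}(x))$ given just before the Lemma, so no work is needed there. The entire content of the Lemma is the inclusion
$$\mathcal Z(\mathcal Z_{S_n}(x)) \cap A_n \subseteq \mathcal Z(\mathcal Z_{A_n}(x)),$$
and my plan is to prove it by a direct element chase, exploiting nothing more than the general observation that for any subgroup $H\leq G$ and element $y\in G$ we have $\mathcal Z_H(y)=\mathcal Z_G(y)\cap H$.

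First I would fix $z\in \mathcal Z(\mathcal Z_{S_n}(x))\cap A_n$ and check that $z\in \mathcal Z_{A_n}(x)$. The element $x$ lies in its own $S_n$-centralizer, and $z$ is central in $\mathcal Z_{S_n}(x)$, so $z$ commutes with $x$; combined with $z\in A_n$, this gives $z\in \mathcal Z_{S_n}(x)\cap A_n=\mathcal Z_{A_n}(x)$. Second, I would verify that $z$ actually centralizes $\mathcal Z_{A_n}(x)$. Every element of $\mathcal Z_{A_n}(x)$ sits inside $\mathcal Z_{S_n}(x)$, and $z$ commutes with everything in $\mathcal Z_{S_n}(x)$ by hypothesis, so in particular $z$ commutes with every element of the subgroup $\mathcal Z_{A_n}(x)$. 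Together these two points show $z\in \mathcal Z(\mathcal Z_{A_n}(x))$, which is exactly what is required.

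There is no real obstacle here; the statement is a formal consequence of how centers and centralizers interact with subgroups, and the only thing worth flagging is that the reverse inclusion can fail in general, because the center of the smaller group $\mathcal Z_{A_n}(x)$ may be strictly larger than what is detected inside $\mathcal Z(\mathcal Z_{S_n}(x))$. This is presumably why the Lemma records only one direction, and why later arguments will need to be supplemented when analyzing $\mathcal Z(\mathcal Z_{A_n}(x))$ directly.
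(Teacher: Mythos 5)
Your proof is correct and is essentially the same argument as the paper's: take $z$ in $\mathcal Z(\mathcal Z_{S_n}(x))\cap A_n$, note it lies in $\mathcal Z_{A_n}(x)=\mathcal Z_{S_n}(x)\cap A_n$, and conclude it centralizes $\mathcal Z_{A_n}(x)$ since that group sits inside $\mathcal Z_{S_n}(x)$. You merely spell out the two steps the paper compresses into one line, so there is nothing to add.
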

\begin{proof}
Let $g\in \mathcal Z(\mathcal Z_{S_n}(x))\cap A_n$ then $g\in \mathcal Z_{A_n}(x)$. Now $\mathcal Z_{A_n}(x)=\mathcal Z_{S_n}(x)\cap A_n$ thus we get $g\in \mathcal Z(\mathcal Z_{A_n}(x))$.
\end{proof}
\noindent Now we need to decide when $\mathcal Z(\mathcal Z_{A_n}(\sigma_\lambda))\supsetneq Z_\lambda \cap A_n$. For the convenience of reader we draw a diagram of the subgroups involved in the proofs. We call the elements of $Z_{\lambda}$ ``diagonal elements'' and the elements of $\mathcal Z_{S_n}(\sigma_{\lambda})$ which are not central ``non-diagonal elements''. 
\[\xymatrix@R=1pc{
& Z_{\lambda_1} \times \cdots \times Z_{\lambda_r}\ar@{=}[d]  &\ar@{=}[d] \mathcal Z_{S_{e_1\lambda_1}}(\sigma_{\lambda_1}) \times \cdots \times \mathcal Z_{S_{e_r\lambda_r}}(\sigma_{\lambda_r})& \\
Z_{\lambda}\ar@{-}^{=}[r]\ar@{-}[d] & \mathcal Z(\mathcal Z_{S_n}(\sigma_{\lambda})) \ar@{-}[r] & \mathcal Z_{S_n}(\sigma_{\lambda}) \ar@{-}[r]\ar@{-}[d] & S_n\ar@{-}[d] \\
Z_{\lambda}\cap A_n \ar@{-}^{\subset}[r]& \mathcal Z(\mathcal Z_{A_n}(\sigma_{\lambda})) \ar@{-}[r]& \mathcal Z_{A_n}(\sigma_{\lambda})  \ar@{-}[r] & A_n \\
}
\]
The main theorem is as follows,
\begin{theorem}\label{Exceptions}
Let $\lambda$ be a partition of $n$ and $\sigma_\lambda \in A_n$. Then $\mathcal Z(\mathcal Z_{A_n}(\sigma_\lambda))\supsetneq Z_\lambda\cap A_n$ if and only if $\lambda$ is one of the following:
\begin{enumerate}
\item $1^3\nu; 2^2\nu; 1^12^2\nu$ where $\nu=\lambda_3\cdots \lambda_r$ with all $\lambda_i\geq 3$ and odd.
\item $1^1\nu; \nu$ where $\nu=\lambda_3\cdots\lambda_{j-1}\lambda_j^{2}\lambda_{j+1}\cdots \lambda_r$ where $\lambda_i \geq 3$ and odd for all $i$.
\end{enumerate}
\end{theorem}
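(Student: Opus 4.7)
The plan is to exploit the direct product decomposition
$$\mathcal Z_{S_n}(\sigma_\lambda) = \prod_{i=1}^r G_i, \qquad G_i := C_{\lambda_i} \wr S_{e_i},$$
and to realize $K := \mathcal Z_{A_n}(\sigma_\lambda)$ as the kernel of the total sign homomorphism $\epsilon = \prod \epsilon_i : \prod G_i \to \{\pm 1\}$, where $\epsilon_i$ is the signature of $G_i$ acting on its $e_i\lambda_i$ symbols. A direct calculation shows that $\epsilon_i$ is trivial on $G_i$ precisely when $\lambda_i$ is odd and $e_i=1$; I call such a $G_i$ a \emph{trivial factor}. If every $G_i$ is trivial, then $K=\mathcal Z_{S_n}(\sigma_\lambda)$ and hence $\mathcal Z(K)=Z_\lambda=Z_\lambda\cap A_n$, so no exception occurs; this is the split case.

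Next I would prove that exceptions can arise only when there is exactly one non-trivial factor $G_k$. Suppose two factors $G_j$ and $G_k$ both contain odd elements. Given any index $i$ and any $h_i\in G_i$, one may extend $h_i$ to an element $h\in K$ by placing a sign-compensating pair in $G_j\times G_k$ and identities in the remaining coordinates. The relation $[g,h]=1$ for $g\in \mathcal Z(K)$ then forces $[g_i,h_i]=1$ in each coordinate, so every $g_i$ is central in $G_i$ and $g\in Z_\lambda$. Thus at most one non-trivial factor, and in that situation $K=\ker\epsilon_k\times\prod_{i\ne k}G_i$ gives
$$\mathcal Z(K) \;=\; \mathcal Z(\ker\epsilon_k)\times \prod_{i\ne k}G_i,$$
so exceptions correspond to the strict inclusion $\mathcal Z(\ker\epsilon_k)\supsetneq \mathcal Z(G_k)\cap\ker\epsilon_k$ inside $G_k$.

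A case analysis on the unique non-trivial factor $G_k$ completes the proof. \textit{(i)} If $\lambda_k=1$ with $e_k\ge 2$, so $G_k=S_{e_k}$ and $\ker\epsilon_k=A_{e_k}$, the strict inclusion holds only for $e_k=3$, since $\mathcal Z(A_3)=A_3$ while $\mathcal Z(A_{e_k})$ is trivial for $e_k=2$ or $e_k\ge 4$. \textit{(ii)} If $\lambda_k\ge 3$ is odd with $e_k\ge 2$, then $\ker\epsilon_k = C_{\lambda_k}^{e_k}\rtimes A_{e_k}$; for $e_k=2$ this is the abelian group $C_{\lambda_k}^2$, strictly larger than the diagonal $\mathcal Z(G_k)$, while for $e_k\ge 3$ the transitivity of $A_{e_k}$ on coordinates pins $\mathcal Z(\ker\epsilon_k)$ back to the diagonal. \textit{(iii)} If $\lambda_k$ is even, then $\sigma_\lambda\in A_n$ forces $e_k$ even. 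For $e_k\ge 4$, the subgroup $S_{e_k}\subseteq \ker\epsilon_k$ (its sign vanishes because $\lambda_k$ is even) is already transitive on the coordinates, again collapsing $\mathcal Z(\ker\epsilon_k)$ to the diagonal $\mathcal Z(G_k)$. For $e_k=2$, strict inclusion holds iff $\lambda_k=2$: in $C_2\wr S_2$ the group $\ker\epsilon_k$ is a Klein four-group (abelian), whereas for $\lambda_k\ge 4$ the element $(c,c^{-1};1)\in\ker\epsilon_k$ with $c\ne c^{-1}$ fails to commute with any swap-type candidate $(c^a,c^a;s)$, blocking the would-be extras.

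Combining the three surviving local possibilities --- $1^3$, $\lambda_k^2$ for $\lambda_k\ge 3$ odd, and $2^2$ --- with the remaining factors (all trivial, i.e., distinct odd parts $\ge 3$) and an optional $1^1$ fixed-point prefix (when compatible with the non-trivial factor, namely when it is $2^2$ or $\lambda_k^2$), one recovers exactly the five partition shapes listed. The main technical obstacle will be step \textit{(iii)}: identifying the correct even-weight obstructing base element when $\lambda_k\ge 4$ or $e_k\ge 3$, and observing that the numerical coincidence $c=c^{-1}$ at $\lambda_k=2$ is precisely what removes this obstruction and allows the Klein-four exception.
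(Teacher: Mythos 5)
Your proof is correct, and it reorganizes the argument in a genuinely cleaner way than the paper does. The paper proceeds through four separate reduction lemmas (its Lemmas 6.5--6.8), each handling a specific partition shape by exhibiting ad hoc non-commuting test elements; in particular, the case $e_i\ge 3$ with $\lambda_i$ odd and $e_i$ even requires a somewhat delicate construction of two auxiliary elements $\tau_1=(II,\ldots,e_i)$ and $\tau_2=(I,\ldots,e_i-1)$ whose centralizers intersect in the diagonal. Your version subsumes all of this: the observation that $\mathcal Z_{A_n}(\sigma_\lambda)$ is the kernel of the product sign character on $\prod_i C_{\lambda_i}\wr S_{e_i}$, together with the sign-compensation trick, reduces everything in one stroke to the case of a single ``non-trivial'' factor (your step 3 is exactly the paper's Lemmas 6.5 and 6.8, plus half of 6.6, proved uniformly), and the remaining local computation of $\mathcal Z(\ker\epsilon_k)$ inside $C_{\lambda_k}\wr S_{e_k}$ replaces the $\tau_1,\tau_2$ argument by the simpler remark that $C_{\lambda_k}^{e_k}\rtimes A_{e_k}$ (resp.\ the copy of $S_{e_k}$ when $\lambda_k$ is even) already pins the center to the diagonal once $e_k\ge 3$. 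Two small points worth writing out when you flesh this in: (a) in step 4 the identity $\mathcal Z(K)=\mathcal Z(\ker\epsilon_k)\times\prod_{i\ne k}G_i$ uses that every trivial factor is abelian, and the comparison group is $Z_\lambda\cap A_n=\bigl(\mathcal Z(G_k)\cap\ker\epsilon_k\bigr)\times\prod_{i\ne k}G_i$, so the exception really is equivalent to the local strict inclusion you state; and (b) in case (iii) with $e_k=2$ and $\lambda_k\ge 4$ you should also dispose of non-swap, non-diagonal candidates $(c^a,c^b;1)$, which fail to commute with the even element $(1,1;s)$ -- but this is immediate. With those details added, your proof is complete and, in my view, preferable in structure to the one in the paper.
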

\noindent The rest of this section is devoted to the proof of this theorem. 
\begin{lemma}\label{RestrictionOnFixedPoints}
Let $\lambda=\lambda_1^{e_1} \cdots \lambda_r^{e_r}$ where at least two distinct $e_i$ and $e_j$ are $\geq 2$. 
Then, $\mathcal Z(\mathcal Z_{A_n}(\sigma_\lambda))= Z_\lambda \cap A_n$. 
\end{lemma}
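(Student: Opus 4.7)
The plan is to prove the non-trivial inclusion $\mathcal Z(\mathcal Z_{A_n}(\sigma_\lambda)) \subseteq Z_\lambda \cap A_n$, since the reverse inclusion is exactly Lemma~\ref{CenterOfCentralizer}. I will argue by contraposition: given any $g \in \mathcal Z_{A_n}(\sigma_\lambda) \setminus Z_\lambda$, I will exhibit an $h \in \mathcal Z_{A_n}(\sigma_\lambda)$ with $gh \neq hg$, which forces $g \notin \mathcal Z(\mathcal Z_{A_n}(\sigma_\lambda))$.

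Decompose $\mathcal Z_{S_n}(\sigma_\lambda) \cong G_1 \times \cdots \times G_r$ with $G_i = C_{\lambda_i} \wr S_{e_i}$ acting on the disjoint symbol-blocks, and write $g = (g_1, \ldots, g_r)$. Since $g \notin Z_\lambda = \prod_i \mathcal Z(G_i)$, there is some index $k$ with $g_k \notin \mathcal Z(G_k)$, and necessarily $e_k \geq 2$ (because $G_i$ is abelian whenever $e_i = 1$). Pick $h_k \in G_k$ with $g_k h_k \neq h_k g_k$. If $h_k$ happens to be an even permutation in $S_n$, the element with $h_k$ in slot $k$ and $1$ elsewhere already lies in $\mathcal Z_{A_n}(\sigma_\lambda)$ and fails to commute with $g$. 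If $h_k$ is odd, this is precisely where the hypothesis ``at least two distinct $e_i, e_j \geq 2$'' is used: pick $j \neq k$ with $e_j \geq 2$ and invoke Lemma~\ref{odd-perm} to produce an odd $f_j \in G_j$. Then the element $h$ with $h_k$ in slot $k$, $f_j$ in slot $j$, and $1$ elsewhere has sign $(-1)(-1)=1$, so $h \in \mathcal Z_{A_n}(\sigma_\lambda)$; comparing $gh$ and $hg$ componentwise, the $k$-th slots $g_k h_k$ and $h_k g_k$ differ by construction, hence $gh \neq hg$ irrespective of the behavior in block $j$.

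The proof rests on only two easy ingredients: a non-central element of $G_k$ admits some non-commuting partner (immediate from the definition of center), and every $G_j$ with $e_j \geq 2$ contains an odd permutation (Lemma~\ref{odd-perm}). I do not anticipate any real obstacle; the role of the double-multiplicity hypothesis is exactly to guarantee a second ``donor'' block from which an odd parity correction can be borrowed when $h_k$ itself is odd, and the proof isolates this as the unique place the hypothesis is needed, thereby also hinting at why the cases it excludes should appear in the exceptional list of Theorem~\ref{Exceptions}.
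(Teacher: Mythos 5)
Your proof is correct and takes essentially the same route as the paper's: decompose $\mathcal Z_{S_n}(\sigma_\lambda)$ into blocks, find a non-commuting partner $h_k$ for the non-central component $g_k$, and when $h_k$ is odd restore parity by borrowing an odd element from a second block of multiplicity at least two via Lemma~\ref{odd-perm}. The only difference is presentational: the paper splits into cases according to whether the offending block is the fixed-point block $S_{e_1}$ and treats the remaining configurations as ``similar,'' whereas your argument handles all blocks uniformly.
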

\begin{proof}
Let us first take the case when $\lambda_1=1$, $e_1\geq 2$ and some other $e_i$ is $\geq 2$. We have $\mathcal Z_{S_n}(\sigma_\lambda)= S_{e_1}\times \mathcal Z_{S_{e_2\lambda_2}}(\sigma_{\lambda_2})\times \cdots \times \mathcal Z_{S_{e_r\lambda_r}}(\sigma_{\lambda_r})$. We note that when $e_1=2$ the subgroup $S_2$ is central. Let $g=(g_1,\ldots, g_r)\in \mathcal Z(\mathcal Z_{A_n}(\sigma_\lambda))$ but $g\notin Z_\lambda$. That is, there exists some $j$ such that $g_j$ is non-diagonal element in $\mathcal Z_{S_{e_j\lambda_j}}(\sigma_{\lambda_j})$. 

Suppose $j\neq 1$. Since $g_j$ is non-diagonal there exists $h_j\in \mathcal Z_{S_{e_j\lambda_j}}(\sigma_{\lambda_j})$ such that $h_jg_j\neq g_jh_j$. Now define $h=(1,\ldots,1,h_j,1,\ldots,1)$ if $h_j$ is even else $h= ((1,2),1,\ldots,1,h_j,1,\ldots,1)$. Then $h\in A_n \cap \mathcal Z_{S_n}(\sigma_{\lambda}) = \mathcal Z_{A_n}(\sigma_{\lambda})$ but $gh\neq hg$, a contradiction. 

Now if $j=1$ the element $g_1$ is non-diagonal in $S_{e_1}$, that is, $g_1\neq 1$. We may also assume that all other $g_i$, other than the first one, are diagonal. However if $e_1=2$ the element $g=((1,2),g_2,\ldots g_r)$ is already in $Z_\lambda$, so we couldn't have assumed otherwise. Now if $e_1\geq 3$, pick $h_1\in S_{e_1}$ which does not commute with $g_1$. Now define $h=(h_1,1,\ldots,1)$ if $h_1$ is even. Else define $h=(h_1,1,\ldots,1, w,1,\ldots,1)$ where $w\in \mathcal Z_{S_{e_i\lambda_i}}(\sigma_{\lambda_i})$ is an odd permutation guaranteed by Lemma~\ref{odd-perm}. Then $h\in A_n$ but $gh\neq hg$, a contradiction.

The proof when $e_1=1$ and $\lambda_1=1$ or $\lambda_1>1$ follows similarly. Now two components $i$ and $j$ will have odd elements because of Lemma~\ref{odd-perm} which can be used to change the sign to get an appropriate $h$.
\end{proof}
This reduces drastically the number of cases we need to look at. Thus we may assume that at most one $e_i$ is greater than $2$ or none, i.e., $\lambda=\lambda_1 \cdots\lambda_{i-1}\lambda_i^{e_i}\lambda_{i+1}\cdots \lambda_r$ with $e_i\geq 1$. Let us deal with the case when $i=1$ and $\lambda_1=1$.
\begin{lemma}\label{fixedPoints}
Let $\lambda =1^{e_1} \lambda_2\cdots \lambda_r$ be a partition of $n$. Then, $\mathcal Z(\mathcal Z_{A_n}(\sigma_\lambda))\supsetneq Z_\lambda \cap A_n$ if and only if $\lambda =1^{3} \lambda_2\cdots \lambda_r$ where $\lambda_i > 1$ and odd for all $i$.
\end{lemma}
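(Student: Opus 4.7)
The plan is to decompose $\mathcal Z_{S_n}(\sigma_\lambda)$ as $S_{e_1}\times C$, where $S_{e_1}$ acts on the $e_1$ fixed points of $\sigma_\lambda$ and $C=\langle\sigma_{\lambda_2}\rangle\times\cdots\times\langle\sigma_{\lambda_r}\rangle$ is the abelian factor, and then track where extra central elements can appear after intersecting with $A_n$. First I would dispose of the trivial cases $e_1\leq 2$: here $S_{e_1}$ is abelian, so $\mathcal Z_{S_n}(\sigma_\lambda)$ is abelian, and hence $\mathcal Z(\mathcal Z_{A_n}(\sigma_\lambda))=\mathcal Z_{A_n}(\sigma_\lambda)=\mathcal Z_{S_n}(\sigma_\lambda)\cap A_n=Z_\lambda\cap A_n$. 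So from now on I may assume $e_1\geq 3$ and $Z_\lambda=\{e\}\times C=C$.

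Assume $g\in \mathcal Z(\mathcal Z_{A_n}(\sigma_\lambda))\setminus (Z_\lambda\cap A_n)$ and write $g=(s,c)$ with $s\in S_{e_1}$ non-trivial (hence non-central in $S_{e_1}$) and $c\in C$. Since $\mathcal Z_{A_n}(\sigma_\lambda)$ has index at most $2$ in $\mathcal Z_{S_n}(\sigma_\lambda)$, the requirement $\mathcal Z_{A_n}(\sigma_\lambda)\subseteq \mathcal Z_{\mathcal Z_{S_n}(\sigma_\lambda)}(g)=\mathcal Z_{S_{e_1}}(s)\times C$ forces $[S_{e_1}:\mathcal Z_{S_{e_1}}(s)]\leq 2$, i.e.\ the $S_{e_1}$-conjugacy class of $s$ has size exactly two. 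This is the key constraint, and it pins down $e_1$: for $e_1\geq 4$, the only index-two subgroup of $S_{e_1}$ is $A_{e_1}$, but $Z_{S_{e_1}}(A_{e_1})=\{e\}$ (since $A_{e_1}$ is transitive and generated by $3$-cycles for $e_1\geq 4$), giving a contradiction. Hence $e_1=3$, and then $s$ must be one of the two $3$-cycles of $S_3$, with centralizer $A_3=\langle s\rangle$.

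It remains to force the odd-part condition. The equality $\mathcal Z_{A_n}(\sigma_\lambda)=A_3\times C$ (which is what makes $(s,c)$ central in $\mathcal Z_{A_n}(\sigma_\lambda)$) is equivalent to saying: for every $(s_0,c_0)\in S_3\times C$, one has $(s_0,c_0)\in A_n$ iff $s_0\in A_3$. Computing signs in $S_n$, $s_0$ contributes $\operatorname{sgn}_{S_3}(s_0)$ (a $3$-cycle or transposition has the same sign viewed in $S_n$), so $(s_0,c_0)\in A_n$ iff $\operatorname{sgn}(s_0)=\operatorname{sgn}_{S_n}(c_0)$. The required equivalence therefore collapses to $C\subseteq A_n$. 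Since $C=\prod_{i\geq 2}\langle\sigma_{\lambda_i}\rangle$ and $\sigma_{\lambda_i}$ is a single cycle of length $\lambda_i$, we have $C\subseteq A_n$ iff every $\lambda_i$ with $i\geq 2$ is odd. Conversely, when $e_1=3$ and every $\lambda_i$ ($i\geq 2$) is odd, any non-identity $3$-cycle $s\in S_3$ (viewed in $S_n$) is even, so $(s,e)\in \mathcal Z_{A_n}(\sigma_\lambda)$, it is not in $Z_\lambda=C$, and it commutes with $A_3\times C=\mathcal Z_{A_n}(\sigma_\lambda)$, witnessing strict containment.

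The main obstacle I anticipate is the step ruling out $e_1\geq 4$; one must be careful that ``conjugacy class of size two'' really forces $\mathcal Z_{S_{e_1}}(s)=A_{e_1}$ and then rule that out by using triviality of the centralizer of $A_{e_1}$ in $S_{e_1}$. Everything else is a direct parity/sign calculation, keeping in mind that the hypothesis only singles out the $1^{e_1}$-block and leaves each $\lambda_i$ ($i\geq 2$) with multiplicity one, which is what allows us to compress the remaining factor into the abelian group $C$.
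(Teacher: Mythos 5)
Your proof is correct, and the decisive step is genuinely different from the paper's. The paper argues by explicit witnesses: when some $\lambda_j$ is even it picks $h_1\in S_{e_1}$ not commuting with the $S_{e_1}$-component of $g$ and multiplies by the odd cycle $\sigma_{\lambda_j}$ to land in $A_n$; when $e_1\geq 4$ and all parts are odd it simply computes $\mathcal Z_{A_n}(\sigma_\lambda)=A_{e_1}\times C$ and reads off that its center is $C$. You instead exploit that $C$ is abelian to get $\mathcal Z_{A_n}(\sigma_\lambda)\subseteq \mathcal Z_{S_{e_1}}(s)\times C$ together with the index-$2$ bound, forcing the $S_{e_1}$-class of $s$ to have size $2$; this kills $e_1\geq 4$ (unique index-two subgroup is $A_{e_1}$, whose centralizer in $S_{e_1}$ is trivial) and the even-part case in one stroke via the final sign computation reducing everything to $C\subseteq A_n$. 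Your route is shorter and more uniform here precisely because the non-fixed-point blocks each occur with multiplicity one, so $C$ is abelian and centralizers in $S_{e_1}\times C$ split cleanly; the paper's witness-construction template is less elegant for this lemma but is the one that continues to work in the neighbouring Lemmas (6.5, 6.7, 6.8 in the paper's numbering) where the complement of the distinguished block is non-abelian and parity must be repaired inside a wreath product. Two small points to tighten: the triviality of $\mathcal Z_{S_{e_1}}(A_{e_1})$ for $e_1\geq 4$ deserves a one-line justification (e.g.\ the centralizer of a transitive subgroup is semiregular, and conjugating a $3$-cycle $(i,k,l)$ by a putative nontrivial centralizing element gives a contradiction), and your phrase ``$\mathcal Z_{A_n}(\sigma_\lambda)=A_3\times C$'' should really be the containment $\mathcal Z_{A_n}(\sigma_\lambda)\subseteq A_3\times C$, which then upgrades to equality by comparing orders; neither affects correctness.
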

\begin{proof}
Suppose $\lambda$ is not of the form $1^{3} \lambda_2\cdots \lambda_r$ where $\lambda_i > 1$ and odd for all $i$. So, if $e_1=0,1$ or $2$ then $\mathcal Z_{S_n}(\sigma_\lambda)$ is Abelian and its subgroup $\mathcal Z_{A_n}(\sigma_\lambda)$ is also Abelian. Therefore,
$$ \mathcal Z(\mathcal Z_{A_n}(\sigma_\lambda))=\mathcal Z_{A_n}(\sigma_\lambda)=\mathcal Z_{S_n}(\sigma_\lambda)\cap A_n = Z_\lambda \cap A_n.$$
Thus, we assume $e_1\geq 3$. Suppose at least one $\lambda_j$ is even.
Now $\sigma_\lambda$ has $e_1$ fixed points. Hence, $\mathcal Z_{S_n}(\sigma_\lambda)=S_{e_1}\times \langle \sigma_{\lambda_2} \rangle \times \cdots \times \langle \sigma_{\lambda_r} \rangle$ and $Z_{\lambda}= \mathcal Z(\mathcal Z_{S_n}(\sigma_\lambda))= \langle\sigma_{\lambda_2}\rangle \times \cdots \times \langle \sigma_{\lambda_r} \rangle$ since $\lambda_i$ are distinct. Now let $g\in \mathcal Z(\mathcal Z_{A_n}(\sigma_\lambda)) \subset \mathcal Z_{S_n}(\sigma_\lambda)$. Write $g=(g_1,g_2,\ldots,g_r)$. If $g\not\in Z_{\lambda}\cap A_n$ then $g_1\neq 1$. But we can find $h_1\in S_{e_1}$ such that $g_1h_1\neq h_1g_1$. Define $h=(h_1,1,\ldots,1)$ if $h_1$ is even else $h=(h_1,1, \ldots, 1, \sigma_{\lambda_j},1,\ldots,1)$. Clearly $h\in A_n \cap \mathcal Z_{S_n}(\sigma_\lambda) = \mathcal Z_{A_n}(\sigma_\lambda)$ and $gh\neq hg$. This contradicts that $g\in \mathcal Z(\mathcal Z_{A_n}(\sigma_\lambda))$, thus $\mathcal Z(\mathcal Z_{A_n}(\sigma_\lambda)) = Z_{\lambda} \cap A_n$.  

Now suppose $e_1\geq 4$ and all $\lambda_i$ are odd. In this case, $\mathcal Z_{A_n} (\sigma_\lambda) = A_{e_1}\times \langle \sigma_{\lambda_2}\rangle \times \cdots \times \langle \sigma_{\lambda_r}\rangle$ since all $\sigma_{\lambda_i}$ are even. And $\mathcal Z(\mathcal Z_{A_n}(\sigma_\lambda))= \langle \sigma_{\lambda_2}\rangle \times \cdots \times \langle \sigma_{\lambda_r} \rangle$ which is equal to $Z_\lambda\cap A_n$.
  
For the converse, $\lambda =1^3 \lambda_2 \cdots \lambda_r$ with all $\lambda_i$ odd. Then 
$\mathcal Z_{S_n}(\sigma_\lambda)=S_{3}\times \langle \sigma_{\lambda_2}\rangle \times \cdots \times \langle \sigma_{\lambda_r}\rangle$ and $Z_{\lambda}= \mathcal Z(\mathcal Z_{S_n}(\sigma_\lambda))= \langle\sigma_{\lambda_2}\rangle \times \cdots \times \langle \sigma_{\lambda_r}\rangle \subset A_n$. Also $\mathcal Z_{A_n}(\sigma_\lambda)=A_{3}\times \langle \sigma_{\lambda_2}\rangle \times \cdots \times \langle \sigma_{\lambda_r}\rangle = \mathcal Z(\mathcal Z_{A_n}(\sigma_{\lambda}))$. Hence $\mathcal Z(\mathcal Z_{A_n}(\sigma_\lambda))\supsetneq Z_\lambda \cap A_n$.
\end{proof}
\noindent This also takes care of the case when all parts are distinct so we may assume $e_i\geq 2$. Thus, assume either $i\geq 2$ or $\lambda_1\geq 2$. That is we can have at most one fixed point, if at all. If $\sigma_\lambda$ has one fixed point, say, $\sigma_{\lambda}(n)=n$ then we may consider $\sigma_{\lambda}\in A_{n-1}$ with no fixed points. Further $\mathcal Z_{A_n}(\sigma_{\lambda})= \mathcal Z_{A_{n-1}}(\sigma_{\lambda})$. Therefore, it is enough to study the partitions which do not have $1$ as its part, i.e., we have $\lambda_1>1$. 
\begin{lemma}\label{RestrictionOnNumberOfRepeatitions}
Let $\lambda= \lambda_1 \cdots\lambda_{i-1}\lambda_i^{e_i}\lambda_{i+1}\cdots \lambda_r$ be a partition with $\lambda_1>1$. Further suppose $\lambda$ satisfies one of the followings,
\begin{enumerate}
\item $e_i\geq 3$, or, 
\item $\lambda_i >2 $ and is even. 
\end{enumerate}
Then, $\mathcal Z(\mathcal Z_{A_n}(\sigma_\lambda))= Z_\lambda \cap A_n$.
\end{lemma}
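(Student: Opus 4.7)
The inclusion $Z_\lambda \cap A_n \subseteq \mathcal Z(\mathcal Z_{A_n}(\sigma_\lambda))$ is Lemma~\ref{CenterOfCentralizer}, so only the reverse inclusion requires work. The plan is to fix $g \in \mathcal Z(\mathcal Z_{A_n}(\sigma_\lambda))$, decompose it along $\mathcal Z_{S_n}(\sigma_\lambda) \cong \prod_{j=1}^r G_j$ with $G_j = C_{\lambda_j} \wr S_{e_j}$, and write $g = (g_1, \ldots, g_r)$. For each $j \neq i$ we have $e_j = 1$, so $G_j = C_{\lambda_j}$ is abelian and $g_j \in Z(G_j) = G_j$ automatically. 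The entire problem thus reduces to showing $g_i \in Z(G_i) = \langle \sigma_{\lambda_i} \rangle$; combined with $g \in A_n$, this gives $g \in Z_\lambda \cap A_n$.

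Next, I translate the hypothesis at the $i$-th coordinate. Testing $g$ against tuples in $\mathcal Z_{A_n}(\sigma_\lambda)$ supported only at position $i$ shows that $g_i$ commutes with every even $h_i \in G_i$; for an odd $h_i$, one can still form an element of $\mathcal Z_{A_n}(\sigma_\lambda)$ by pairing $h_i$ with an odd element of some $G_j$, $j \neq i$, and an odd element of $G_j = C_{\lambda_j}$ exists iff $\lambda_j$ is even. Thus if some $\lambda_j$ with $j \neq i$ is even, $g_i$ centralizes all of $G_i$ and lies in $Z(G_i)$, finishing this subcase. Otherwise we only know $g_i \in Z_{G_i}(G_i^+)$, where $G_i^+$ denotes the even subgroup of $G_i$, and the lemma reduces to the algebraic claim: \emph{under hypothesis (1) or (2), $Z_{G_i}(G_i^+) = Z(G_i)$}.

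To establish this claim, write $(y, \tau) \in Z_{G_i}(G_i^+) \subseteq C_{\lambda_i}^{e_i} \rtimes S_{e_i}$. The wreath-product multiplication shows that $(y, \tau)$ commutes with $(x, 1) \in G_i^+$ iff $x$ is $\tau$-invariant, meaning $x_m = x_{\tau^{-1}(m)}$ for all $m$. For each non-identity $\tau$ I would construct $(x, 1) \in G_i^+$ violating $\tau$-invariance, forcing $\tau = 1$. Under (1) with $e_i \geq 3$: pick $m_0$ with $\tau^{-1}(m_0) \neq m_0$, set $x_{m_0} = \sigma_{\lambda_i}$ and $x_{\tau^{-1}(m_0)} = 1$; if $\lambda_i$ is even, use a third coordinate $m_1 \notin \{m_0, \tau^{-1}(m_0)\}$ (available because $e_i \geq 3$) to balance the sign into $G_i^+$. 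Under (2) with $e_i = 2$ and $\lambda_i > 2$ even: the only non-identity $\tau$ is $(I,II)$, requiring $x_1 = x_2$; choosing $x_1 = \sigma_{\lambda_i}$, $x_2 = \sigma_{\lambda_i}^{-1}$ gives $a_1 + a_2 = \lambda_i$ (even, so $(x,1) \in G_i^+$) yet $x_1 \neq x_2$ since $\lambda_i > 2$. Once $\tau = 1$ is forced, commutation of $(y, 1)$ with the elements $(1, \sigma) \in G_i^+$ forces $y_m = y_{\sigma^{-1}(m)}$; under (1), $\{1\} \times A_{e_i}$ sits in $G_i^+$ and $A_{e_i}$ is transitive on $\{1, \ldots, e_i\}$ for $e_i \geq 3$, so $y$ is diagonal, while under (2), $(1, (I, II)) \in G_i^+$ (since $(I, II)$ has sign $(-1)^{\lambda_i} = 1$) directly forces $y_1 = y_2$. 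In either case $(y, 1) \in Z(G_i)$, completing the claim.

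The principal obstacle is the construction of the witness elements $(x, 1) \in G_i^+$ breaking $\tau$-invariance: the hypotheses (1) and (2) are exactly what guarantees that $G_i^+$ is rich enough. The neighbouring cases excluded from the statement — $e_i = 2$ with $\lambda_i = 2$, and $e_i = 2$ with $\lambda_i$ odd — fail the conclusion precisely because $G_i^+$ is then too small to supply such witnesses, consistent with the exceptional partitions enumerated in Theorem~\ref{Exceptions}.
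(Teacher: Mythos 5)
Your proof is correct, and at the top level it follows the same strategy as the paper's: both reduce to the single non-abelian factor $G_i=C_{\lambda_i}\wr S_{e_i}$ of $\mathcal Z_{S_n}(\sigma_\lambda)$ (the factors with $e_j=1$ being abelian) and then exhibit enough \emph{even} elements of $G_i$ to force the $i$-th component of a putative central element to be diagonal. Where you differ is in how the witnesses are organized. The paper works with specific long cycles in the $S_{e_i}$-direction --- $\tau=(I,II,\dots,e_i)$, and, when $\tau$ is odd, the pair $\tau_1,\tau_2$ of Lemma~\ref{tau-non-diagonal} whose centralizers in $G_i$ intersect in the diagonal --- while you isolate the clean claim $\mathcal Z_{G_i}(G_i^+)=\mathcal Z(G_i)$ and prove it in two steps along the wreath-product coordinates: base-group elements that fail $\tau$-invariance kill the $S_{e_i}$-component of $(y,\tau)$, and then transitivity of $A_{e_i}$ (for $e_i\ge 3$), respectively the even transposition $(I,II)$ (for $e_i=2$, $\lambda_i$ even), diagonalizes $y$. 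Your organization is in fact slightly tighter: the paper's dichotomy ``$g_i=\tau$ or $g_i$ does not commute with $\tau$'' is not literally exhaustive (every element of $\langle\sigma_{\lambda_i},\tau\rangle$ commutes with $\tau$), whereas your two-step argument covers all non-diagonal $(y,\tau)$ uniformly; and your preliminary shortcut importing odd witnesses from a factor with $\lambda_j$ even is sound but not needed once the claim is in place. Two cosmetic repairs: write $\sigma_{\lambda_{i,m_0}}$ rather than $\sigma_{\lambda_i}$ for the generator of the $m_0$-th cyclic factor, and say explicitly that hypothesis (2) with $e_i\ge 3$ is absorbed into your case (1) and with $e_i=1$ is trivial, so every value of $e_i$ is accounted for.
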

\begin{proof} We prove (1) first.
Let $g=(g_1,\ldots, g_r)\in \mathcal Z(\mathcal Z_{A_n}(\sigma_\lambda))$ but $g\notin Z_\lambda$. 
All $g_j$ are diagonal except $g_i$ which is non-diagonal. The element $g_i\in \mathcal Z_{S_{e_i\lambda_i}}(\sigma_{\lambda_i})$ where $\sigma_{\lambda_i}=\sigma_{\lambda_{i,1}}\ldots \sigma_{\lambda_{i,e_i}}$. Recall the notation that $\sigma_{\lambda_{i,j}}$ are cycles of length $\lambda_i$ as introduced in the beginning of Section~\ref{center-an}. Now consider $\tau=(I,II,\ldots, e_i)$ as in Lemma~\ref{tau-non-diagonal}. Then $\mathcal Z_{\mathcal Z_{S_{e_i\lambda_i}}(\sigma_{\lambda_i})}(\tau)=\langle \sigma_{\lambda_i}, \tau \rangle$.
If $g_i=\tau$ then it does not commute with $h_i=\sigma_{\lambda_{i,1}}\sigma_{\lambda_{i,2}}$ (remember that $e_i\geq 3$). Since all $\sigma_{\lambda_{i,j}}$ are of same length $\lambda_i$ this is an element in $A_n$. Thus we get $h=(1,\ldots,1,h_i,1\ldots,1)$ in $\mathcal Z_{A_n}(\sigma_{\lambda})$ which does not commute with $g$, a contradiction. 

On other hand if $g_i\neq \tau $ then $g_i$ does not commute with $\tau$. We observe that $\tau$ is a product of $\lambda_i$ many cycles, each of length $e_i$. If $e_i$ is odd then $\tau$ is an even permutation. Further, if both $e_i$ and $\lambda_i$ are even then, also, $\tau$ is an even permutation. And in these cases we may take $h_i=\tau$ and get a contradiction as above. 
 
Now let us assume that $\lambda_i$ is odd and $e_i$ is even and thus $\tau$ is an odd permutation.  In this case instead of $\tau$ we make use of two elements $\tau_1, \tau_{2}\in \mathcal Z_{S_{e_i\lambda_i}}(\sigma_{\lambda_i})$ as follows. The element $\tau_1=(II, III, \ldots, e_i)$ and $\tau_2= (I,II, \ldots, e_i-1)$. Each of the $\tau_1$ and $\tau_2$ are product of $\lambda_i$ many cycles, each of length $e_i-1$ and hence even.  
Now we note that $\mathcal Z_{\mathcal Z_{S_{e_i\lambda_i}}(\sigma_{\lambda_i})}(\tau_1)=\langle \sigma_{\lambda_{i,1}}, \displaystyle \prod_{j=2}^{e_i} \sigma_{\lambda_{i,j}}, \tau_1 \rangle$ 
and $\mathcal Z_{\mathcal Z_{S_{e_i\lambda_i}}(\sigma_{\lambda_i})}(\tau_{2})=\langle \sigma_{\lambda_{i,e_i}}, \displaystyle \prod_{j=1}^{e_i-1}\sigma_{\lambda_{i,j}}, \tau_{2} \rangle$ (see Lemma~\ref{tau-non-diagonal}). This gives us that $\mathcal Z_{\mathcal Z_{S_{e_i\lambda_i}}(\sigma_{\lambda_i})}(\tau_1) \cap \mathcal Z_{\mathcal Z_{S_{e_i\lambda_i}}(\sigma_{\lambda_i})}(\tau_{2})=\langle \sigma_{\lambda_i} \rangle$. Since $g_i$ is non-diagonal it does not commute with either $\tau_1$ or $\tau_{2}$ else it would be in the intersection of centralizers which is diagonal. Thus we may take $h_i$ to be $\tau_1$ or $\tau_2$ as required, and get a contradiction.

For the proof of (2), let $g=(g_1,\ldots, g_r)\in \mathcal Z(\mathcal Z_{A_n}(\sigma_\lambda))$ but $g\notin Z_\lambda$. The component $g_i$ is non-diagonal element in $\mathcal Z_{S_{e_i\lambda_i}}(\sigma_{\lambda_i})$. In this case $\sigma_{\lambda_i}=\sigma_{\lambda_{i,1}}\sigma_{\lambda_{i,2}}$. Take $\tau=(I,II)$ then $\tau$ is an even permutation as $\lambda_i$ is even. If $\tau\neq g_i$ take $h_i=\tau$ and we are done. Else if $g_i=\tau$ then we take $\sigma_{\lambda_{i,1}}^2$. Since $\lambda_i>2$, $\sigma_{\lambda_{i,1}}^2\neq 1$ and it is even permutation. And now taking $h_i=\sigma_{\lambda_i,1}^2$ would lead to a contradiction.
\end{proof}
This leaves us with the following case now. The partition is $\lambda=\lambda_1 \cdots\lambda_{i-1}\lambda_i^{2}\lambda_{i+1}\cdots \lambda_r$ with $\lambda_1>1$ and either $\lambda_i=2$ or $\lambda_i$ is odd. And this is where all complication lies.
\begin{lemma}\label{EvenPartsNotAllowed2}
Let $\lambda$ with $\lambda_1>1$ be one of the followings,
\begin{enumerate}
\item $\lambda_1 \cdots\lambda_{i-1}\lambda_i^{2}\lambda_{i+1}\cdots \lambda_r$, and suppose, $\lambda_i$ is odd and $\lambda_m$ even for some $m\neq i$, or,
\item $2^2 \lambda_2 \cdots \lambda_r$ with some $\lambda_m$ even.
\end{enumerate}
Then, $\mathcal Z(\mathcal Z_{A_n}(\sigma_\lambda))= Z_\lambda \cap A_n$.
\end{lemma}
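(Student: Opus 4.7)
The plan is to mimic the strategy of Lemmas~\ref{RestrictionOnNumberOfRepeatitions} and~\ref{fixedPoints}: assume for contradiction that there exists $g=(g_1,\ldots,g_r)\in\mathcal Z(\mathcal Z_{A_n}(\sigma_\lambda))\setminus Z_\lambda$ and produce $h\in\mathcal Z_{A_n}(\sigma_\lambda)$ with $gh\neq hg$. At any position $j$ with $e_j=1$ the factor $\mathcal Z_{S_{\lambda_j}}(\sigma_{\lambda_j})=\langle\sigma_{\lambda_j}\rangle$ is cyclic and equal to its own center, forcing $g_j$ to be diagonal; so the unique possibly non-diagonal component of $g$ is $g_i$ in case~(1), respectively $g_1$ in case~(2). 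The hypothesis ``some $\lambda_m$ is even'' provides $\sigma_{\lambda_m}\in\mathcal Z_{S_n}(\sigma_\lambda)$, a single cycle of even length and hence an odd permutation; this will be the parity-corrector whenever I am forced to choose an odd element locally.

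For case~(1), the factor $g_i$ lives in $C_{\lambda_i}\wr S_2$, and I would split on whether $g_i$ involves the block-swap $(I,II)$. If $g_i=\sigma_{\lambda_i,1}^a\sigma_{\lambda_i,2}^b$ with $a\not\equiv b\pmod{\lambda_i}$, I would take $h_i=(I,II)$; since $\lambda_i$ is odd this is a product of $\lambda_i$ transpositions, hence odd, and conjugation by it sends $g_i$ to $\sigma_{\lambda_i,1}^b\sigma_{\lambda_i,2}^a\neq g_i$. Setting $h_m=\sigma_{\lambda_m}$ (also odd), the combined $h$ is even and lies in $\mathcal Z_{A_n}(\sigma_\lambda)$. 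If instead $g_i=((x,y),(I,II))$ involves the swap, I would take $h_i=\sigma_{\lambda_i,1}$, a cycle of odd length and therefore an even permutation, so $h=(1,\ldots,h_i,\ldots,1)$ is already in $A_n$; the wreath-product multiplication formula from Section~\ref{gsg} yields $h_ig_i=((\zeta x,y),\tau)$ versus $g_ih_i=((x,y\zeta),\tau)$, which are unequal because $\zeta$ generates $C_{\lambda_i}$.

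For case~(2), the factor $g_1$ lives in $C_2\wr S_2\cong D_8$, realized concretely as $\langle(12),(34),(13)(24)\rangle\subset S_4$, and I would perform a direct case check on its six non-diagonal elements. The sign-character kernel is $V_4=\{e,(12)(34),(13)(24),(14)(23)\}$. If $g_1$ is one of the four odd non-diagonal elements $(12),(34),(1423),(1324)$, then $h_1=(13)(24)$ is even and fails to commute with each of them, so $h=(h_1,1,\ldots,1)\in A_n$ suffices. If instead $g_1\in\{(13)(24),(14)(23)\}$, then its centralizer in $D_8$ is all of $V_4$ and every non-commuting element of $D_8$ is odd; I would then take $h_1=(12)$ and absorb the parity by appending $h_m=\sigma_{\lambda_m}$.

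The main obstacle is this last configuration of~(2) (even non-diagonal $g_1$) together with the first subcase of~(1): these are exactly the places where no even element of the \emph{local} centralizer breaks commutativity with $g$, which is precisely why the auxiliary even part $\lambda_m$ guaranteed by the hypothesis is indispensable --- without it, the two lemma statements would fail, as one sees from Theorem~\ref{Exceptions}(1)--(2). All remaining verifications, namely that each proposed $h$ actually centralizes $\sigma_\lambda$ and that the signs combine as claimed, are immediate from the product structure of $\mathcal Z_{S_n}(\sigma_\lambda)$ together with the sign computations already developed in Lemmas~\ref{odd-perm} and~\ref{RestrictionOnNumberOfRepeatitions}.
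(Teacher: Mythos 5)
Your proof is correct and follows essentially the same route as the paper's: reduce to the single possibly non-diagonal component $g_i$ (resp.\ $g_1$), exhibit a local element $h_i$ of $\mathcal Z_{S_{e_i\lambda_i}}(\sigma_{\lambda_i})$ not commuting with it, and repair parity when needed by appending the odd permutation $\sigma_{\lambda_m}$ supplied by the hypothesis. The paper simply asserts the existence of such an $h_i$ and branches on its sign, whereas you make the choice explicit in each subcase; the extra detail is harmless and the verifications are accurate.
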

\begin{proof}
For the proof of (1), let $g=(g_1,\ldots, g_r)\in \mathcal Z(\mathcal Z_{A_n}(\sigma_\lambda))$ but $g\notin Z_\lambda$. Then $g_i$ is non-diagonal. Pick $h_i\in \mathcal Z_{S_{e_i\lambda_i}}(\sigma_{\lambda_i})$ such that $h_ig_i\neq g_ih_i$.  If $h_i$ is even then $h= (1,\ldots,h_i,\ldots,1)$ would do the job. Else take $h=(1,\ldots,h_i, 1, \ldots,\sigma_{\lambda_m}, 1\ldots,1)$ which is an even permutation, and does the job.

In the second case, we have $\sigma_\lambda=(1,2)(3,4)\sigma_{\lambda_2}\cdots \sigma_{\lambda_r}$ and $\mathcal Z_{S_n}(\sigma_\lambda) = \mathcal Z_{S_4}((1,2)(3,4))\times \langle \sigma_{\lambda_2} \rangle \times \cdots \times \langle \sigma_{\lambda_r} \rangle $.
Let $g=(g_1,\ldots, g_r)\in \mathcal Z(\mathcal Z_{A_n}(\sigma_\lambda))$ but $g\notin Z_\lambda$. In this case $g_1$ has to be non-diagonal. Now we can do the same thing as above to get a contradiction.
\end{proof}
At this step we are left with the $\lambda$ of following kinds, and its variant (see the discussion following Lemma~\ref{fixedPoints}) with exactly one fixed point, 
\begin{enumerate}
\item $\lambda_1 \cdots\lambda_{i-1}\lambda_i^{2}\lambda_{i+1}\cdots \lambda_r$, where all $\lambda_j$ are odd, and,
\item $2^2 \lambda_2 \cdots \lambda_r$, where all $\lambda_j$ are odd.
\end{enumerate}
Now we are ready to prove the main theorem of this section,
\begin{proof}[\bf{Proof of Theorem~\ref{Exceptions}}]
Lemma~\ref{RestrictionOnFixedPoints},~\ref{fixedPoints},~\ref{RestrictionOnNumberOfRepeatitions} and \ref{EvenPartsNotAllowed2} prove that if the partition $\lambda$ is not of the type listed in the theorem then $\mathcal Z(\mathcal Z_{A_n}(\sigma_\lambda))= Z_\lambda \cap A_n$. Thus it remains to prove if $\lambda$ is of one the kinds listed in the theorem then we do not get equality. Which we prove now case-by-case.
 
In case $\lambda=1^{3} \lambda_3\cdots \lambda_r$ and $\lambda_i$ are odd for all $i$ then the result follows from Lemma~\ref{fixedPoints}.
Now, take $\lambda=2^2 \lambda_3 \lambda_4 \cdots \lambda_r$ and $\lambda_3 \geq 2$ and odd for all $i$. Write $\sigma_\lambda=(1,2)(3,4)\sigma_{\lambda_3}\ldots\sigma_{\lambda_r}$ then $\mathcal Z_{S_n}(\sigma_{\lambda})= \{1,(1,2), (3,4), (1,2)(3,4), (1,3)(2,4), (1,3,2,4), (1,4,2,3), (1,4)(2,3)\} \times \langle \sigma_{\lambda_3} \rangle \cdots \times \langle \sigma_{\lambda_r} \rangle$. And $\mathcal Z_{A_n}(\sigma_{\lambda})= \{1,(1,2)(3,4), (1,3)(2,4), (1,4)(2,3)\} \times \langle \sigma_{\lambda_3} \rangle \cdots \times \langle \sigma_{\lambda_r} \rangle$ which is equal to its own center, being commutative. However the element $(1,3)(2,4)\notin Z_\lambda$. Thus we get strict inequality in this case. The argument is similar when $\lambda= 1^1 2^2 \lambda_3 \cdots \lambda_r$.
 
Now suppose $\lambda=\lambda_3 \cdots\lambda_{i-1}\lambda_i^{2}\lambda_{i+1}\cdots \lambda_r$ with $\lambda_3\geq 3$ and all odd. In this case,  $\sigma_\lambda = \sigma_{\lambda_3}\cdots\sigma_{\lambda_i} \cdots \sigma_{\lambda_r}$ where $\sigma_{\lambda_j}$
is a cycle of length $\lambda_j$ for $j\neq i$ and $\sigma_{\lambda_i} = \sigma_{\lambda_{i,1}} \sigma_{\lambda_{i,2}}$ is a product of two cycles, each of length $\lambda_i$. Then $\sigma_{\lambda_i,1}$ and $\sigma_{\lambda_i,2}$ both belong to $\mathcal Z(\mathcal Z_{A_n}(\sigma_\lambda))$ but none of them belong to $Z_{\lambda}$ instead their product belongs. A similar argument works for the case when $\lambda = 1^1 \lambda_3 \cdots \lambda_{i-1} \lambda_i^{2} \lambda_{i+1} \cdots \lambda_r$.
\end{proof}

\section{$z$-classes in $A_n$ - when the conjugacy class does not split}
Our strategy for the proof is similar to that of $S_n$ case. That is, we look at the action of $\mathcal Z(\mathcal Z_{A_n}(\sigma_{\lambda}))$ on $\{1,2,\ldots,n\}$ and decide when it determines the partition. This works in almost all cases. We continue to use notation from previous sections.
\begin{proposition}\label{Anpartition} 
The action of $Z_\lambda \cap A_n$ on the set $\{ 1, 2, \ldots, n\}$ determines the  partition $\lambda$ uniquely except when $\lambda_1^{e_1}=1^2$.
\end{proposition}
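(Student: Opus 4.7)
The plan is to adapt the strategy of Lemma~\ref{lemmapart} (the $S_n$-analogue): show that the orbits of $Z_\lambda \cap A_n$ on $\{1,2,\ldots,n\}$ coincide with the orbits of $Z_\lambda$, which are $e_i$ orbits of size $\lambda_i$ for each $i$ (with $e_1$ singletons when $\lambda_1=1$), from which one directly reads off $\lambda$. Since by hypothesis $\lambda_1^{e_1}\neq 1^2$, we have $Z_\lambda = \prod_{i=1}^r \langle \sigma_{\lambda_i}\rangle$, an internal direct product of cyclic subgroups acting independently on the blocks of the fixed-point/cycle decomposition. So the plan reduces to controlling how each factor interacts with the sign constraint imposed by $A_n$.

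The crux is to show that, for each $i$, the projection $\pi_i \colon Z_\lambda \cap A_n \to \langle \sigma_{\lambda_i}\rangle$ is surjective. This would ensure that the $A_n$-restricted subgroup still acts transitively on each $\lambda_i$-cycle of $\sigma_{\lambda_i}$, keeping orbit sizes unchanged. Setting $s_i := \mathrm{sgn}(\sigma_{\lambda_i})$, a direct count of transpositions gives $s_i = -1$ precisely when $e_i$ is odd and $\lambda_i$ is even. Because $\sigma_\lambda \in A_n$, we have $\prod_i s_i = 1$, so the number of indices $i$ with $s_i = -1$ is even. If this number is $0$, then $Z_\lambda \subseteq A_n$ and each $\pi_i$ is trivially onto (reducing immediately to Lemma~\ref{lemmapart}). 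Otherwise at least two indices $i\neq j$ have $s_i = s_j = -1$, and given an arbitrary $\sigma_{\lambda_i}^{a_i}$ one either already has an element of $A_n$ (when $a_i s_i$ is even), or can multiply by $\sigma_{\lambda_j}$ to cancel the sign, producing a preimage. In either case $\sigma_{\lambda_i}^{a_i}$ lies in the image of $\pi_i$.

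Once surjectivity of all $\pi_i$ is secured, the orbit of a point $x$ in the $j$-th cycle of $\sigma_{\lambda_i}$ under $Z_\lambda \cap A_n$ equals its orbit under $\pi_i(Z_\lambda \cap A_n) = \langle \sigma_{\lambda_i}\rangle$, namely the whole cycle of size $\lambda_i$. So the multiset of orbit sizes is $\{\lambda_i^{e_i}\}_i$ (with $1$'s of multiplicity $e_1$ when $\lambda_1=1$), which recovers $\lambda$.

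The anticipated obstacle is really the parity bookkeeping in the surjectivity step, specifically the need for a ``sign-cancelling'' factor in some other component, which is precisely guaranteed by the evenness of the count of $s_i = -1$. This same bookkeeping also clarifies why the case $\lambda_1^{e_1} = 1^2$ must be excluded: there $Z_\lambda$ contains the odd transposition $(1,2)$, and depending on whether its sign can be cancelled using the $\nu$-part of $\lambda$, the $Z_\lambda \cap A_n$-orbits on $\{1,2\}$ either collapse to the single orbit $\{1,2\}$ (mimicking $2^1\nu$) or split into two singletons (mimicking $1^2\nu$), producing a genuine ambiguity in the recovered partition.
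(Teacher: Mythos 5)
Your proposal is correct and follows essentially the same route as the paper: both reduce to Lemma~\ref{lemmapart} by showing the orbits of $Z_\lambda\cap A_n$ coincide with those of $Z_\lambda$, using that $\mathrm{sgn}(\sigma_{\lambda_i})=-1$ exactly when $\lambda_i$ is even and $e_i$ is odd, that evenness of $\sigma_\lambda$ forces an even number of such indices, and that any odd $\sigma_{\lambda_i}^{a_i}$ can be corrected by a disjoint odd $\sigma_{\lambda_j}$. Your phrasing via surjectivity of the projections $\pi_i$ is just a repackaging of the paper's ``two related points remain related'' argument.
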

\begin{proof}
We know that the action of $Z_\lambda$ on the set $\{ 1, 2, \ldots, n\}$ determines the partition uniquely except when $\lambda_1^{e_1}=1^2$ (see Lemma~\ref{lemmapart}). We need to prove that if two points in $\{ 1, 2, \ldots, n\}$ are related under the action of $Z_\lambda$ then they are so under the action of $Z_\lambda \cap A_n$. 
 
Since $\sigma_\lambda = \sigma_{\lambda_1} \cdots \sigma_{\lambda_r}$, we reorder $\sigma_{\lambda_k}$'s, if required, so that $\sigma_{\lambda_k}$ for $1\leq k\leq l$ are even permutations and  $\sigma_{\lambda_k}$ for $l< k\leq r$ are odd permutations. Since $\sigma_\lambda$ is an even permutation, the number of odd permutations $r-l$ is even (including $0$). If $r=l$ then $Z_{\lambda}=Z_{\lambda}\cap A_n$ and we are done.
Else suppose $i\neq j$ are related under $Z_{\lambda}$. That is, there exists $t$ such that $\sigma_{\lambda_t}^m(i)=j$ for some power $m$. If $\sigma_{\lambda_t}^m$ is even, we are done. So we may assume $\sigma_{\lambda_t}^m$ is odd. But since the number of odd permutations is assumed to be even we have another odd permutation $\sigma_{\lambda_s}$ disjoint from this one. Thus, $\sigma_{\lambda_s}\sigma_{\lambda_t}^m$ will do the job. 
\end{proof}
\noindent We record the following example of the exception case. Take $\lambda=1^24^1\vdash 6$ then $\mathcal Z_{S_6}(3,4,5,6)=\langle (1,2)\rangle \times \langle (3,4,5,6)\rangle = Z_{1^24^1}$. And $Z_{1^24^1} \cap A_5=\langle (1,2) (3,4,5,6)\rangle$ which would determine the partition $2^14^1$. 
Now let us look at the case when $\mathcal Z(\mathcal Z_{A_n}(\sigma_\lambda))\neq Z_\lambda \cap A_n$. In this case we have the following,
\begin{proposition}\label{part-determine}
If $\lambda$ is one of the following with $\sigma_{\lambda}$ in $A_n$,
\begin{enumerate}
\item $1^2\lambda_2^{e_2}\cdots\lambda_r^{e_r}$, where $\lambda_2\geq 2$, or,
\item $1^1\nu, \nu$ where $\nu=\lambda_2 \cdots\lambda_{i-1}\lambda_i^{2}\lambda_{i+1}\cdots \lambda_r$, where $\lambda_j\geq 3$ and odd for all $j$,
\end{enumerate}
then, the action of $\mathcal Z(\mathcal Z_{A_n}(\sigma_\lambda))$ on the set $\{ 1,2, \dots, n\}$ determines the partition $\lambda$ uniquely.
\end{proposition}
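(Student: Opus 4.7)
The plan is to handle the two cases separately, in each case recovering $\lambda$ from the orbit sizes of $\mathcal Z(\mathcal Z_{A_n}(\sigma_\lambda))$ on $\{1,\ldots,n\}$ and then excluding any rival even partition of $n$ that might share the same orbits.

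For case (1), $\lambda = 1^2\lambda_2^{e_2}\cdots\lambda_r^{e_r}$ with $\lambda_2 \geq 2$: no entry of the exception list in Theorem~\ref{Exceptions} has a $1^2$ prefix followed by $\lambda_2 \geq 2$, so $\mathcal Z(\mathcal Z_{A_n}(\sigma_\lambda)) = Z_\lambda\cap A_n$. Writing $Z_\lambda = \langle(1,2)\rangle \times \prod_{i\geq 2}\langle\sigma_{\lambda_i}\rangle$, I will show the projection of $Z_\lambda\cap A_n$ onto the second factor is surjective: every tuple $\prod\sigma_{\lambda_i}^{b_i}$ either lies in $A_n$ already (if its sign is $+1$) or becomes even after left multiplication by $(1,2)$. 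Hence the orbits of $Z_\lambda\cap A_n$ on $\{3,\ldots,n\}$ coincide with those of $Z_\lambda$, and Lemma~\ref{lemmapart} applied to the remaining partition of $n-2$ recovers the multiplicities $(\lambda_i,e_i)_{i\geq 2}$. The pair $\{1,2\}$ splits into two singleton orbits precisely when every $\sigma_{\lambda_i}$ is even, and otherwise forms a single orbit of size $2$. In the first case the orbit multiset immediately reads $\lambda = 1^2\nu$; in the second, the orbit multiset could a priori come from either $1^2\nu$ or $2^1\nu$, but swapping these shifts the parity of $n-\sum e_i$ by one, so only the given $\lambda$ remains even and hence admissible in $A_n$.

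For case (2), $\lambda = 1^1\nu$ or $\lambda = \nu$ with $\nu = \lambda_2\cdots\lambda_j^2\cdots\lambda_r$ and all parts $\geq 3$ and odd: this is a genuine Theorem~\ref{Exceptions} exception. From its proof, $\mathcal Z(\mathcal Z_{A_n}(\sigma_\lambda))$ is obtained from $Z_\lambda$ by replacing $\langle\sigma_{\lambda_j,1}\sigma_{\lambda_j,2}\rangle$ with $\langle\sigma_{\lambda_j,1},\sigma_{\lambda_j,2}\rangle$, where $\sigma_{\lambda_j,1},\sigma_{\lambda_j,2}$ are the two individual cycles making up $\sigma_{\lambda_j}$ (both even, since $\lambda_j$ is odd). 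As these cycles have disjoint support, both subgroups produce the same two orbits $\mathrm{supp}(\sigma_{\lambda_j,1})$ and $\mathrm{supp}(\sigma_{\lambda_j,2})$ of size $\lambda_j$ each. Therefore the orbit sizes of $\mathcal Z(\mathcal Z_{A_n}(\sigma_\lambda))$ agree with those of $Z_\lambda$, and since $\lambda_1^{e_1}\neq 1^2$ here, Lemma~\ref{lemmapart} recovers $\lambda$ directly.

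The main technical obstacle I foresee is the careful exclusion of alternative even partitions $\mu\neq\lambda$ producing the same orbit sizes. In case (1)'s merged sub-case one stratifies $\mu$ by its $1$-multiplicity $e_1^\mu$: values $e_1^\mu\geq 3$ produce too many singleton orbits, $e_1^\mu=1$ produces exactly one, and $e_1^\mu\in\{0,2\}$ force $\mu\in\{2^1\nu,1^2\nu\}$, resolved by the parity argument above. In case (2), one must also rule out partitions obtained by fusing a $\lambda_j^2$-block into a single $(2\lambda_j)^1$-cycle (whose square in $A_n$ gives two $\lambda_j$-orbits mimicking those from $\lambda_j^2$); the identity $2\lambda_j-1-2(\lambda_j-1)=1$ shows this fusion flips the parity of $n-\sum e_i$, so once more only $\lambda$ is even.
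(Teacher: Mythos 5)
Your core argument for both cases is essentially the paper's own proof: in case (1) you reduce to the $S_n$ orbit analysis and observe that the only surviving ambiguity is the leading block $1^2$ versus $2^1$, which is killed because swapping them changes the parity of $n-\sum e_i$ and only one of the two partitions is even; in case (2) you read the center off from the proof of Theorem~\ref{Exceptions} and note that replacing $\langle\sigma_{\lambda_i,1}\sigma_{\lambda_i,2}\rangle$ by $\langle\sigma_{\lambda_i,1},\sigma_{\lambda_i,2}\rangle$ does not change the orbit partition. Up to that point you are fine, and in fact more careful than the paper (e.g.\ your discussion of when $\{1,2\}$ is one orbit versus two singletons).

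The problem is the program you announce in your final paragraph, namely a complete exclusion of every rival even partition $\mu\neq\lambda$ with the same orbit multiset. In case (2) your list of rivals is incomplete, and the missing rival cannot be excluded. Take $\lambda=3^2\nu''$ (the repeated part equal to $3$, e.g.\ $\lambda=3^25^19^1\vdash 20$) and $\mu=1^33^1\nu''$. Then $\mu$ is an exception of type (1) in Theorem~\ref{Exceptions}: $\mathcal Z_{A_n}(\sigma_\mu)=A_3\times\langle\sigma_3\rangle\times\cdots$ is abelian, the $A_3$ factor acts transitively on the three fixed points and produces an orbit of size $3$, so $\mathcal Z(\mathcal Z_{A_n}(\sigma_\mu))$ has exactly the same orbit multiset $\{3,3\}\cup\nu''$ as $\mathcal Z(\mathcal Z_{A_n}(\sigma_\lambda))$; moreover both partitions are even, so no parity argument applies. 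Indeed these two classes genuinely are $z$-equivalent (Theorem~\ref{maintheorem2}(2) and the $n=20$ table entry $\{1^33^15^19^1,\,3^25^19^1\}$), so the ``strong'' uniqueness you are aiming for is simply false for this subfamily. The proposition has to be read, as the paper implicitly does, as the weaker assertion that the orbit sizes of $\mathcal Z(\mathcal Z_{A_n}(\sigma_\lambda))$ are exactly the parts of $\lambda$ (so that $\lambda$ can be read off from its own center), with the coincidence $1^3\nu\sim 3^1\nu$ deferred to and absorbed by Proposition~\ref{main-prop-non-split}. You should either restrict your claim accordingly or explicitly carve out the $\lambda=3^2\nu''$ rival as a genuine, unavoidable coincidence rather than something to be ``ruled out.'' A similar caution applies to your case (1) stratification: a size-$m$ orbit of the center need not come from an $m$-cycle (a $(2m)$-cycle contributes two $m$-orbits after intersecting with $A_n$, a $2^2$ block contributes one $4$-orbit, a $1^3$ block one $3$-orbit), so ``Lemma~\ref{lemmapart} recovers the multiplicities'' is not by itself a rival-exclusion argument.
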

\begin{proof}
The first case appears in $S_n$, where $\mathcal Z_{S_n}(\sigma_\lambda)$ determines all $\lambda_i>2$ except for the first orbit which is $\{1,2\}$. Thus there are two possibilities either $1^2$ or $2^1$. Since $\sigma_{\lambda} = \sigma_{\lambda_2}\cdots \sigma_{\lambda_r}\in A_n$ we note that the partition $2^1 \lambda_2^{e_2}\cdots\lambda_r^{e_r}$ is not even because this would correspond to the element $(1,2) \sigma_{\lambda} = (1,2)\sigma_{\lambda_2}\cdots \sigma_{\lambda_r}$ which is odd. Thus this leaves a unique choice for $\lambda$ where the first part must be $1^2$.
 
For the part (2), from the proof of Theorem~\ref{Exceptions}, we see that 
$$\mathcal Z(\mathcal Z_{A_n}(\sigma_\lambda)) = \langle \sigma_{\lambda_1},\ldots,\sigma_{\lambda_{i-1}}, \sigma_{\lambda_{i,1}},\sigma_{\lambda_{i,2}}, \sigma_{\lambda_{i+1}}, \ldots, \sigma_{\lambda_r}\rangle.$$
Clearly this determines the partition $\lambda$ uniquely.
\end{proof}
\noindent Now, we prove the main proposition as follows.
\begin{proposition}\label{main-prop-non-split}
Let $n\geq 4$. Let $\nu$ be a restricted partition of $n-3$, with distinct and odd parts, in which $1$ (and $2$) does not appear as its part. Let $\lambda = 1^3\nu$ and $\mu=3^1\nu$ be partitions of $n$ obtained by extending $\nu$. Then $\lambda$ and $\mu$ belong to the same $z$-class in $A_n$. Conversely, if $\lambda$ corresponds to a non-split class in $A_n$ then it can be $z$-equivalent to at most one more class (possibly split), provided $\lambda$ is of the form $1^3\nu$.
\end{proposition}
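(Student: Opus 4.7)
The plan is to establish the forward statement by an explicit computation of centralizers, and to handle the converse by reducing to the exceptional cases of Theorem~\ref{Exceptions} and analysing each.

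For the forward direction, I will select representatives $\sigma_\lambda$ fixing the symbols $\{1,2,3\}$ pointwise and acting as $\sigma_\nu$ on $\{4,\ldots,n\}$, together with $\sigma_\mu = (1,2,3)\sigma_\nu$. Then
$$\mathcal{Z}_{S_n}(\sigma_\lambda) = S_3 \times \prod_i \langle \sigma_{\nu_i}\rangle, \qquad \mathcal{Z}_{S_n}(\sigma_\mu) = \langle (1,2,3)\rangle \times \prod_i \langle \sigma_{\nu_i}\rangle.$$
Because every part of $\nu$ is odd, each $\sigma_{\nu_i}$ is an even permutation, so elements of $\mathcal{Z}_{S_n}(\sigma_\lambda)$ lie in $A_n$ precisely when their $S_3$-coordinate is even, i.e., in $A_3=\langle (1,2,3)\rangle$. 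Hence $\mathcal{Z}_{A_n}(\sigma_\lambda) = \langle (1,2,3)\rangle \times \prod_i \langle \sigma_{\nu_i}\rangle = \mathcal{Z}_{A_n}(\sigma_\mu)$; the centralizers are literally equal, and in particular $\lambda$ and $\mu$ are $z$-equivalent.

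For the converse, assume $\lambda$ is non-split and $\mathcal{Z}_{A_n}(\sigma_\lambda)$ is $A_n$-conjugate to $\mathcal{Z}_{A_n}(\sigma_\mu)$ for some partition $\mu \ne \lambda$. Passing to centers, $\mathcal{Z}(\mathcal{Z}_{A_n}(\sigma_\lambda))$ is $A_n$-conjugate to $\mathcal{Z}(\mathcal{Z}_{A_n}(\sigma_\mu))$, so their orbit-and-group structure on $\{1,\ldots,n\}$ coincide. If $\lambda$ is not on the exception list of Theorem~\ref{Exceptions}, then $\mathcal{Z}(\mathcal{Z}_{A_n}(\sigma_\lambda)) = Z_\lambda \cap A_n$, and Propositions~\ref{Anpartition} and~\ref{part-determine}(1) recover $\lambda$ uniquely from the action on $\{1,\ldots,n\}$, forcing $\mu = \lambda$, a contradiction. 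If $\lambda$ is exceptional of type (2) in Theorem~\ref{Exceptions}, Proposition~\ref{part-determine}(2) again forces $\mu = \lambda$.

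It remains to inspect the exceptional cases (1) of Theorem~\ref{Exceptions}. For $\lambda = 2^2 \nu$ or $\lambda = 1^1 2^2 \nu$ (items (1b), (1c)), the center $\mathcal{Z}(\mathcal{Z}_{A_n}(\sigma_\lambda))$ contains a Klein four-subgroup $V_4$ acting transitively on a four-symbol orbit, with an extra singleton orbit in the $1^1 2^2$ case. A transitive $V_4$ on four symbols is only realised as the $A_4$-centralizer of a cycle-pair $(a,b)(c,d)$, i.e., from a $2^2$ block, and an isolated singleton orbit of the center only arises from a $1^1$ block; matching the remaining orbits forces $\mu = \lambda$, a contradiction. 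Finally, for $\lambda = 1^3 \nu$ (item (1a)), the center equals $\langle (1,2,3)\rangle \times \prod_i \langle \sigma_{\nu_i}\rangle$, and a transitive $C_3$ on three symbols is produced either by a $1^3$ block (via $A_3$) or by a $3^1$ block (via $\langle (a,b,c)\rangle$), while the remaining orbits uniquely recover $\nu$. So $\mu \in \{1^3 \nu, 3^1 \nu\}$, and $\mu \ne \lambda$ forces $\mu = 3^1 \nu$. When the smallest part of $\nu$ exceeds $3$, the partition $3^1 \nu$ has all parts distinct and odd, hence splits into two $A_n$-conjugacy classes, both $z$-equivalent to $\lambda$, which accounts for the ``possibly split'' clause. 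The main obstacle I anticipate is the $V_4$-analysis in cases (1b) and (1c), where Proposition~\ref{part-determine} does not apply: one must work with the full group structure of the center (not merely its orbit partition) and exclude alternative realisations of $V_4$ and of an isolated center-fixed point among the small partition-blocks $1^4$, $4^1$, $1^2 2^1$, $1^1 3^1$, etc.
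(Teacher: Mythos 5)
Your proposal is correct and follows the same skeleton as the paper's proof: the forward direction is the identical computation showing $\mathcal Z_{A_n}(\sigma_{1^3\nu})=\langle(1,2,3)\rangle\times\prod_i\langle\sigma_{\nu_i}\rangle=\mathcal Z_{A_n}(\sigma_{3^1\nu})$, and the converse is the same reduction through Theorem~\ref{Exceptions}, Proposition~\ref{Anpartition} and Proposition~\ref{part-determine}, leaving only the residual exceptional cases $2^2\nu$ and $1^12^2\nu$ to be excluded. The one place you diverge is precisely there: you propose to distinguish these by the group-theoretic structure of the center's action (a transitive Klein four-group on a four-point orbit can only come from a $2^2$ block, etc.), and you correctly flag that this requires ruling out alternative realisations among $1^4$, $4^1$, $1^22^1$, $1^13^1$. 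The paper takes a shorter route: after using the orbit of size $4$ and parity to reduce the first block to one of $1^4$, $1^13^1$, $2^2$, it simply compares the orders of the corresponding $A_n$-centralizers ($12\prod\nu_i$, $3\prod\nu_i$ or $9\prod\nu_i$, versus $4\prod\nu_i$), which avoids any analysis of how a $V_4$ or a fixed point of the center can arise. Your $V_4$ argument would go through, but the order count is the more economical way to finish; otherwise the two proofs are essentially the same.
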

\begin{proof}
When the partition $\lambda=1^3 \nu$ then $\mathcal Z_{S_n}(\sigma_{\lambda})= S_3\times \mathcal Z_{S_{n-3}}(\sigma_{\nu})$ and its center is $Z_{\lambda}= \{1\} \times Z_{\nu}$. However $\mathcal Z_{A_n}(\sigma_{\lambda})= A_3\times \mathcal Z_{A_{n-3}}(\sigma_{\nu})$ is Abelian and its action would give the partition $3^1\nu$. In this case, if we take partition $\lambda'=3^1\nu$ then $\mathcal Z_{A_n}(\sigma_{\lambda'})=\langle (1,2,3) \rangle\times \mathcal Z_{A_{n-3}}(\sigma_{\nu})$ and $\mathcal Z_{A_n}(\sigma_{\lambda'}) = \mathcal Z_{A_n}(\sigma_{\lambda})$ (in case $\lambda'$ corresponds to a split class they are $z$-conjugate thus we may choose this representative). And thus $\sigma_{\lambda}$ and $\sigma_{\lambda'}$ would be $z$-conjugate.

For the converse, if $\mathcal Z(\mathcal Z_{A_n}(\sigma_\lambda))= Z_\lambda \cap A_n$, then from Proposition~\ref{Anpartition}, the action of $\mathcal Z(\mathcal Z_{A_n}(\sigma_\lambda))$ determines the partition $\lambda$ of $n$ uniquely, and we are done. Otherwise, we use Proposition~\ref{part-determine} which implies that $\mathcal Z_{A_n}(\sigma_\lambda)$ determines the partition $\lambda$ uniquely except in two cases. One of the cases is $1^3\nu$ where the centralizer is conjugate to that of $3^1\nu$ as required in the proposition. Thus we need to rule out the possibility when $\lambda=2^2\nu$ and $1^12^2\nu$ where $\nu=\lambda_3 \lambda_4 \cdots \lambda_r$, $\lambda_i\geq 3$ and are odd for all $i$. 

Let us deal with the case when $\lambda=2^2\nu$, the other case is similar. The element $\sigma_\lambda=(1,2)(3,4)\sigma_{\lambda_2}\cdots \sigma_{\lambda_r}$ and $$\mathcal Z_{A_n}(\sigma_\lambda)= \langle (1,2)(3,4), (1,3)(2,4)\rangle \times \langle \sigma_{\lambda_2}\rangle \times \cdots \times \langle \sigma_{\lambda_r} \rangle$$
which has size $4.\lambda_2.\cdots.\lambda_r$.
Since this is Abelian its center is itself which determines the partition $\lambda$ except for the first orbit which is $\{1,2,3,4\}$. Considering that $\lambda$ is even, we have the possibilities of the first part being $1^4, 1^13^1,2^2$. We claim that if $\lambda=1^4\nu, 1^13^1\nu$ or $2^2\nu$ the size of centralizers is different and hence they can not be $z$-equivalent. We note that, $\mathcal Z_{A_n}(\sigma_{1^4\nu})= \langle A_4 \rangle \times \langle \sigma_{\lambda_2}\rangle \times \cdots \times \langle \sigma_{\lambda_r} \rangle$ which has size $12.\lambda_2.\cdots.\lambda_r$. And if $\lambda_2>3$, $\mathcal Z_{A_n}(\sigma_{1^13^1\nu})= \langle (1,2,3) \rangle \times \langle \sigma_{\lambda_2}\rangle \times \cdots \times \langle \sigma_{\lambda_r} \rangle$ of size $3.\lambda_2.\cdots.\lambda_r$ and if $\lambda_2=3$, $\mathcal Z_{A_n}(\sigma_{1^13^2\lambda_3\cdots\lambda_r})= \langle (2,3,4), (5,6,7) \rangle \times \langle \sigma_{\lambda_3}\rangle \times \cdots \times \langle \sigma_{\lambda_r} \rangle$ of size $3^2.\lambda_3.\cdots.\lambda_r$. 
\end{proof}

\subsection{Proof of Theorem~\ref{maintheorem2}}
Let $C$ be a conjugacy class of $S_n$ corresponding to the partition $\lambda_1 \lambda_2\cdots \lambda_r$ of $n$ with all $\lambda_i$ distinct and odd. Then the conjugacy class $C$ splits in two conjugacy classes, say, $C_1$ and $C_2$ in $A_n$.
From Proposition~\ref{split-z-class} if each $\lambda_i$ is a perfect square for all $1\leq i \leq r$, then both the conjugacy classes $C_1$ and $C_2$ are distinct $z$-classes in $A_n$. Else $C_1 \cup C_2$ form a single $z$-class in $A_n$.

Now, when $C$ does not split, it follows from Proposition~\ref{main-prop-non-split}, that except the partition $1^3\nu$ where $\nu$ is a partition of $n-3$, with all parts odd and distinct without $1$ as its part, all conjugacy classes remain distinct $z$-classes. And in the case when $\lambda=1^3\nu$ its $z$-class can coincide with that of $3^1\nu$.

\section{Rational-valued Characters of $A_n$}\label{rational-charactersAn}
We begin with recalling characters of the alternating group from~\cite{pr}. First we note that, the number of partitions of $n$ with distinct and odd parts is equal to the number of self-conjugate partitions of $n$ (see Lemma 4.6.16 in~\cite{pr}). In fact, these are in one-one correspondence via folding. This corresponds to the split conjugacy classes. The complex irreducible characters of $A_n$ are given as follows (see Theorem 4.6.7 and 5.12.5 in~\cite{pr}). For every partition $\mu$ of $n$ which is not self-conjugate (this corresponds to non-split conjugacy classes), the irreducible character $\chi_{\mu}$ of $S_n$ restricts to an irreducible character of $A_n$. Since all characters of $S_n$ are integer-valued, these characters of $A_n$ are rational-valued too. Now, for all partitions $\mu$ of $n$ which are self-conjugate (these correspond to split conjugacy classes), there exists a pair of irreducible characters $\chi_{\mu}^+$ and $\chi_{\mu}^{-}$. The character values are given by the following formula.  When $g\in A_n$ of cycle type $\lambda$ with all parts distinct and odd, say, $\lambda=(2m_1+1, \cdots, 2m_l+1)$, and the folding corresponding to $\lambda$ is the partition $\mu$ then 
$$\chi_{\mu}^{\pm}(g_{\lambda}^+)=\frac{1}{2}\left(e_{\lambda}\pm \sqrt{e_{\lambda} |Z_{\lambda}|}\right)$$
and $ \chi_{\mu}^{\pm}(g_{\lambda}^-)= \chi_{\mu}^{\mp}(g_{\lambda}^+)$. Here $g_{\lambda}^+$ and $g_{\lambda}^-$ denote the two split conjugacy classes in $A_n$ and $e_{\lambda}=(-1)^{\sum_{i=1}^l m_i}$ . 
Else $\chi_{\mu}^+(g)=\chi_{\mu}^-(g) = \frac{\chi_{\mu}(g)}{2}$. Clearly the characters $\chi_{\mu}^{\mp}$ are rational valued if and only if $e_{\lambda}=1$ and $|Z_{\lambda}|$ is a perfect square.
\begin{lemma}
For $\lambda=(2m_1+1,\cdots,2m_l+1)$, if $|Z_{\lambda}|$ is a square then $e_{\lambda}=1$. 
\end{lemma}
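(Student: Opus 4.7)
The plan is to observe that $|Z_\lambda|$ has a very explicit product formula for this kind of partition, and then to reduce modulo $4$. Since $\lambda=(2m_1+1,\ldots,2m_l+1)$ has all parts distinct and odd, the centralizer in $S_n$ of $\sigma_\lambda$ is the direct product $\langle \sigma_{\lambda_1}\rangle \times \cdots \times \langle \sigma_{\lambda_l}\rangle$ of cyclic groups of orders $\lambda_i=2m_i+1$, so
\[
|Z_\lambda| \;=\; \prod_{i=1}^{l}(2m_i+1).
\]
In particular $|Z_\lambda|$ is odd, and any odd perfect square is congruent to $1\pmod 4$.

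Next, I would examine each factor modulo $4$. Writing $2m_i+1\equiv 1+2m_i\pmod 4$, we see that
\[
2m_i+1 \equiv \begin{cases} 1\pmod 4 & \text{if } m_i \text{ is even},\\ 3\pmod 4 & \text{if } m_i \text{ is odd}. \end{cases}
\]
Let $k=\#\{i : m_i \text{ is odd}\}$. Then
\[
|Z_\lambda| \;\equiv\; 3^{k} \pmod 4.
\]

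Now I would put the two observations together. If $|Z_\lambda|$ is a perfect square then $|Z_\lambda|\equiv 1\pmod 4$, which forces $3^k\equiv 1\pmod 4$, i.e.\ $k$ is even. But $\sum_{i=1}^{l} m_i \equiv k \pmod 2$, since only the odd $m_i$ contribute to the parity of the sum. Hence $\sum_i m_i$ is even, and therefore $e_\lambda=(-1)^{\sum_i m_i}=1$, as desired.

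There is no real obstacle here: the only step that requires a small check is that an odd square is $\equiv 1\pmod 4$, which is completely standard. The argument is a straightforward mod $4$ computation once one writes down the product formula for $|Z_\lambda|$.
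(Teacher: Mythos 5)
Your proof is correct and follows essentially the same route as the paper, which simply asserts that $\prod_{i=1}^l(2m_i+1)=(2a+1)^2$ forces $\sum_i m_i$ to be even; your mod $4$ computation is exactly the detail the paper leaves implicit.
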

\begin{proof}
In this case, $|Z_{\lambda}|=\prod_{i=1}^l (2m_i+1)=(2a+1)^2$ for some $a$. Then $\sum_{i=1}^l m_i$ must be even.
\end{proof}
\begin{proof}[\bf{Proof of Theorem~\ref{rational-character}}]
From the discussion above, all characters of $A_n$ corresponding to non-split conjugacy classes are rational-valued. And, both characters corresponding to split conjugacy classes are simultaneously rational-valued if and only if the partition $\lambda$ has all its parts distinct and odd and the product of parts is a perfect square. Clearly this is same as the criteria determining conjugacy classes which are rational.    
\end{proof}

\section{Some GAP calculations}\label{sum-squares}
In this work, we have come across two functions on natural numbers. 
The first one is $\epsilon$ defined as $$\epsilon(n)=\left|\{n=m_1^2+m_2^2+\cdots + m_r^2 \mid 1\leq m_1< m_2 <\ldots < m_r\leq n, m_i {\ \rm odd\ } \forall i\}\right|$$
and its generating function is $\displaystyle\prod_{i=0}^{\infty} \left(1+ x^{(2i+1)^2}\right)$.
And another one is $\delta$ defined as, 
$$\delta(n)=\left|\left\{n=n_1 + \cdots + n_r \mid 1\leq n_1< \ldots < n_r\leq n, n_i {\ \rm odd\ } \forall i, \displaystyle \prod_{i=1}^r n_i \in \mathbb N^2 \right\}\right|.$$
Writing a natural number as a sum of squares is well studied problem in number theory. However, we could not find references to these functions. Clearly $\epsilon(n) \leq \delta(n)$. The inequality could be strict, for example, $n=78=3 + 75$ where $3.75=15^2$ but none of the components are square. This happens infinitely often. For example, let  $p_1$ and $p_2$ be odd and distinct primes. Consider, $n=p_1+p_2+p_1p_2$ and the partition of $n$ given by $p_1^1p_2^1(p_1p_2)^1$. Then $\epsilon(n)<\delta(n)$. We may also consider, for example, $m=p_1+p_1p_2^2$, i.e., we have the partition of $m$ given as $p_1^1(p_1p_2^2)^1$. Then $\epsilon(m) < \delta(m)$.
We make a table for the values of $\epsilon$ and $\delta$ for small values of $n$ and also note down the partitions giving rise to the function $\delta$. Some values of $\delta(n)$ are also given in~\cite{br}.
\vskip3mm
\begin{center}
\begin{tabular}{|c|c|c|c||c|c|c|c|}
\hline
$n$ & $\epsilon(n)$ & $\delta(n)$  & partitions & $n$ & $\epsilon(n)$ & $\delta(n)$  & partitions\\  
 \hline
$9$ & $1$ & $1$ & $9^1$ &   $34$ & $1$ & $1$ &$9^125^1$  \\ \hline
$10$& $1$ & $1$ & $1^19^1$ &$35$ & $1$  &$1$ & $1^19^125^1$ \\ \hline
$23$& $0$ &$1$ &$3^1 5^1 15^1$& $39$& $0$ &$1$ &$3^1 9^1 27^1$ \\ \hline
$24$&$0$ &$1$&$1^1 3^1 5^1 15^1$ & $40$&$0$&$2$ &$1^13^19^127^1, 3^17^19^121^1$ \\ \hline
$25$ & $1$ & $1$ & $25^1$ & $41$ & $0$ &$1$ & $1^13^17^19^121^1$ \\ \hline
$26$ &$1$ &$1$&$1^125^1$ &$47$ &$0$&$3$ & $3^111^133^1, 5^17^135^1, 5^115^127^1$ \\ \hline
$30$&$0$&$1$ &$3^127^1$&$48$ &$0$&$5$&$1^13^111^133^1, 1^15^17^135^1$, \\ &&&&&&&$1^15^115^127^1$,$5^17^115^121^1$, \\ &&&&&&& $3^15^115^125^1$ \\ \hline
$31$&$0$&$2$&$1^13^127^1, 3^17^121^1$ &$49$&$1$&$3$&$49^1, 1^13^15^115^125^1$, \\ &&&&&&&$1^15^17^115^121^1$ \\ \hline
$32$ &$0$ &$2$ &$1^13^17^121^1, 3^15^19^115^1$ &$50$ &$1$ &$2$ &$1^149^1, 5^145^1$ \\ \hline
$33$&$0$&$1$&$1^13^15^19^115^1$&$51$&$0$&$1$& $1^15^145^1$ \\ \hline
\end{tabular}
\end{center}
\vskip3mm
Next, we used GAP~\cite{GAP4} to compute $z$-classes, rational conjugacy classes etc. Here we have some examples for $A_n$ which verifies our theorem.
\vskip3mm
\begin{center}
\begin{tabular}{|c|c|c|c|}
\hline
$n$ & number of  & number of  & partitions \\ 
    &conj. classes & $z$-classes & \\ \hline 
20 & 324 & 315 & $\{1^33^15^19^1, 3^25^19^1\}, \{1^13^17^19^1, 1^13^17^19^1\}$ \\
   &      &    & $\{1^13^15^111^1, 1^13^15^111^1\}, \{9^111^1,9^111^1 \}, \{1^119^1, 1^119^1\}$ \\
   &      &    & $\{7^113^1,7^113^1 \}, \{5^115^1, 5^115^1\}, \{1^317^1, 3^117^1, 3^117^1\}$ \\\hline
27 & 1526 & 1506 & $\{1^33^15^17^19^1, 3^25^17^19^1\}, \{1^13^15^17^111^1, 1^13^15^17^111^1 \}$ \\
   &     &   & $\{7^19^111^1, 7^19^111^1\}, \{5^19^113^1, 5^19^113^1\}$ \\
   &     &   & $\{1^311^113^1, 3^111^113^1, 3^111^113^1\}, \{5^17^115^1, 5^17^115^1 \}$\\
   &     &    &$\{1^39^115^1, 3^19^115^1, 3^19^115^1 \}, \{ 1^1 11^1 15^1, 1^1 11^1 15^1\}$ \\
   &     &    &$\{1^37^117^1, 3^17^117^1, 3^17^117^1 \}, \{ 1^35^119^1, 3^15^119^1, 3^15^119^1\} $\\
   &     &    &$\{1^17^119^1,  1^17^119^1\}, \{1^19^117^1,1^19^117^1 \}$\\
   &&& $\{1^33^121^1, 3^2 21^1\}, \{1^15^121^1, 1^15^121^1\}$\\
   &  &  & $\{1^13^123^1, 1^13^123^1\}, \{27^1, 27^1\}$\\\hline
\end{tabular}
\end{center}
\vskip3mm
The last column combines together the partitions which give same $z$-class and the repetition of a partition indicates a split conjugacy class.



\begin{thebibliography}{99}
\bibitem[BS]{bs} Bhunia, Sushil; Singh, Anupam, \emph{``Conjugacy classes of centralizers in unitary groups''}, 2016, \verb+(http://front.math.ucdavis.edu/1610.06728)+.
\bibitem[Ca1]{ca1} Carter, R. W., \emph{``Centralizers of semisimple elements in finite groups of Lie type''}, Proc. London Math. Soc. (3) 37 (1978), no. 3, 491-507. 
\bibitem[Ca2]{ca2} Carter, R. W., \emph{``Centralizers of semisimple elements in the finite classical groups''}, Proc. London Math. Soc. (3) 42 (1981), no. 1, 1-41. 
\bibitem[Pr]{pr} Prasad, Amritanshu, \emph{``Representation theory, A combinatorial viewpoint''}, Cambridge Studies in Advanced Mathematics, 147. Cambridge University Press, Delhi, 2015.
\bibitem[Br]{br} Brison, Owen J., \emph{``Alternating groups and a conjecture about rational valued characters''}, Linear and Multilinear Algebra 24 (1989), no. 3, 199-207. 
\bibitem[Ca]{ca} Can, Himmet, \emph{``Representations of the generalized symmetric groups''}, Beiträge Algebra Geom. 37 (1996), no. 2, 289-307. 
\bibitem[FS]{fs} Feit, Walter; Seitz, Gary M., \emph{``On finite rational groups and related topics''}, Illinois J. Math. 33 (1989), no. 1, 103-131.
\bibitem[Go]{go} Gongopadhyay, Krishnendu, \emph{``The z-classes of quaternionic hyperbolic isometries''}, J. Group Theory 16 (2013), no. 6, 941-964. 
\bibitem[GAP]{GAP4} The GAP~Group, \emph{``GAP -- Groups, Algorithms, and Programming, Version 4.8.7''}, 2017, \verb+(http://www.gap-system.org)+.
\bibitem[GK]{gk} Gongopadhyay, Krishnendu; Kulkarni, Ravi S., \emph{``z-classes of isometries of the hyperbolic space''}, Conform. Geom. Dyn. 13 (2009), 91-109. 
\bibitem[Gr]{gr} Green, J. A., \emph{``The characters of the finite general linear groups''}, Trans. Amer. Math. Soc. 80 (1955), 402-447. 
\bibitem[JK]{jk} James, Gordon; Kerber, Adalbert, \emph{``The representation theory of the symmetric group''}, With a foreword by P. M. Cohn. With an introduction by Gilbert de B. Robinson. Encyclopedia of Mathematics and its Applications, 16. Addison-Wesley Publishing Co., Reading, Mass., 1981.
\bibitem[Kl]{kl} Kletzing, Dennis, \emph{``Structure and representations of Q-groups''}, Lecture Notes in Mathematics, 1084. Springer-Verlag, Berlin, 1984. vi+290 pp.
\bibitem[Ku]{ku} Kulkarni, Ravi S., \emph{``Dynamical types and conjugacy classes of centralizers in groups''}, J. Ramanujan Math. Soc. 22 (2007), no. 1, 35-56. 
\bibitem[MS]{ms} Mishra, Ashish; Srinivasan, Murali K., \emph{``The Okounkov-Vershik approach to the representation theory of $G\sim S_n$''}, J. Algebraic Combin. 44 (2016), no. 3, 519-560. 
\bibitem[NT]{nt} Navarro, Gabriel; Tiep, Pham Huu, \emph{``Rational irreducible characters and rational conjugacy classes in finite groups''}, Trans. Amer. Math. Soc. 360 (2008), no. 5, 2443-2465. 
\bibitem[OEIS]{oeis} The On-Line Encyclopedia of Integer Sequences, published electronically at https://oeis.org. 
\bibitem[Os]{os} Osima, Masaru, \emph{``On the representations of the generalized symmetric group''}, 
Math. J. Okayama Univ. 4, (1954). 39-56. 
\bibitem[Se]{se} Serre, Jean-Pierre, \emph{``Linear representations of finite groups, Translated from the second French edition by Leonard L. Scott''}, Graduate Texts in Mathematics, Vol. 42. Springer-Verlag, New York-Heidelberg, 1977. 
\bibitem[Si]{si} Singh, Anupam, \emph{``Conjugacy classes of centralizers in $G_2$''}, J. Ramanujan Math. Soc. 23 (2008), no. 4, 327-336. 
\end{thebibliography}
\end{document}